\newcommand{\baseRing}[1]{\ensuremath{\mathbb{#1}}}
\newcommand{\R}{\baseRing{R}}
\newcommand{\F}{\baseRing{F}}
\newcommand{\I}{\baseRing{I}}
\newcommand{\J}{\baseRing{J}}
\newcommand{\jgU}{\ensuremath{\mathfrak{U}}\xspace}
\newcommand{\jgg}{\ensuremath{\mathfrak{g}}\xspace}
\newcommand{\jgd}{\ensuremath{\mathfrak{d}}\xspace}
\newcommand{\jgso}{\ensuremath{\mathfrak{so}}\xspace}
\newcommand{\jdef}[1]{\index{#1}\emph{#1}}
\newcommand{\stext}[1]{\ensuremath{\quad\text{#1}\quad}}
\renewcommand{\iff}{\ensuremath{\Leftrightarrow}\xspace}
\newcommand{\DC}{\ensuremath{{\mathcal{A}_d}}}
\newcommand{\DCp}[1]{\ensuremath{{\mathcal{A}_d^{#1}}}}
\newcommand{\HL}{\ensuremath{h_d}}
\newcommand{\HLds}[1]{\ensuremath{\overline{\HL^{{#1}}}}}
\newcommand{\HLd}[1]{\ensuremath{{\HL^{{#1}}}}}
\newcommand{\lie}[1]{\ensuremath{Lie(#1)}}
\numberwithin{equation}{section}
\DeclareMathOperator{\rank}{rank}
\DeclareMathOperator{\Ob}{ob}
\DeclareMathOperator{\Mor}{mor}
\DeclareMathOperator{\Ad}{Ad}
\DeclareMathOperator{\tr}{Tr}
\DeclareMathOperator{\cay}{Cay}
\DeclareMathOperator{\DLPSC}{{\mathfrak{LP}_\textit{d}}}
\DeclareMathOperator{\DLDPSC}{{\mathfrak{LDP}_\textit{d}}}
\newcommand{\ti}[1]{\widetilde{#1}}
\newcommand{\conj}{\overline}
\newcommand{\imp}{\ensuremath{\Rightarrow}\xspace}
\newtheorem{theorem}{Theorem}[section]
\newtheorem{corollary}[theorem]{Corollary}
\newtheorem{lemma}[theorem]{Lemma}
\newtheorem{proposition}[theorem]{Proposition}
\theoremstyle{definition}
\newtheorem{definition}[theorem]{Definition}
\newtheorem{remark}[theorem]{Remark}
\newtheorem{example}[theorem]{Example}
\newcommand\widecheck[1]{%
\savestack{\tmpbox}{\stretchto{%
  \scaleto{%
    \scalerel*[\widthof{\ensuremath{#1}}]{\kern-.6pt\bigwedge\kern-.6pt}%
    {\rule[-\textheight/2]{1ex}{\textheight}}
  }{\textheight}%
}{0.5ex}}%
\stackon[1pt]{#1}{\scalebox{-1}{\tmpbox}}%
}
\begin{document}

\title[Nonholonomic discrete lagrangian reduction by stages]{Lagrangian reduction of nonholonomic discrete mechanical systems by stages}


\author[J. Fern\'andez]{Javier Fern\'andez}

\address{Instituto Balseiro, Universidad Nacional de Cuyo --
  C.N.E.A.\\ Av. Bustillo 9500, San Carlos de Bariloche, R8402AGP,
  Rep\'ublica Argentina}

\email{jfernand@ib.edu.ar}


\author[C. Tori]{Cora Tori}

\address{Depto. de Ciencias B\'asicas, Facultad de Ingenier\'ia,
  Universidad Nacional de La Plata\\Calle 116 entre 47 y 48,
  2$^{\underline{o}}$ piso, La Plata, Buenos Aires, 1900, Rep\'ublica
  Argentina}

\address{Centro de Matem\'atica de La Plata (CMaLP)}

\email{cora.tori@ing.unlp.edu.ar}


\author[M. Zuccalli]{Marcela Zuccalli}

\address{Depto. de Matem\'atica, Facultad de Ciencias Exactas,
  Universidad Nacional de La Plata Calles 50 y 115, La Plata, Buenos
  Aires, 1900, Rep\'ublica Argentina}

\address{Centro de Matem\'atica de La Plata (CMaLP)}

\email{marce@mate.unlp.edu.ar}


\subjclass{Primary: 37J15, 70G45; Secondary: 70G75.}

\keywords{Geometric mechanics, discrete nonholonomic mechanical
  systems, symmetry and reduction.}

\thanks{This research was partially supported by grants from
  Universidad Nacional de Cuyo (\#06/C567 and \#06/C574) and
  Universidad Nacional de La Plata.}


\bibliographystyle{amsplain}

\maketitle

\begin{abstract}
  In this work we introduce a category $\DLDPSC$ of discrete-time
  dynamical systems, that we call discrete
  Lagrange--D'Alembert--Poincar\'e systems, and study some of its
  elementary properties. Examples of objects of $\DLDPSC$ are
  nonholonomic discrete mechanical systems as well as their lagrangian
  reductions and, also, discrete Lagrange-Poincar\'e systems. We also
  introduce a notion of symmetry group for objects of $\DLDPSC$ and a
  process of reduction when symmetries are present. This reduction
  process extends the reduction process of discrete
  Lagrange--Poincar\'e systems as well as the one defined for
  nonholonomic discrete mechanical systems. In addition, we prove
  that, under some conditions, the two-stage reduction process (first
  by a closed and normal subgroup of the symmetry group and, then, by
  the residual symmetry group) produces a system that is isomorphic in
  $\DLDPSC$ to the system obtained by a one-stage reduction by the
  full symmetry group.
\end{abstract}



\section{Introduction}
\label{sec:introduction}


Mechanical systems are dynamical systems that are used to model a wide
variety of aspects of the real world, from the falling apple to the
movement of astronomical objects, including machinery and billiards
(see, for instance,~\cite{bo:goldstein-classical_mechanics}
and~\cite{bo:AM-mechanics}). One of the flavors of Mechanics
---Lagrangian or Variational Mechanics--- describes the evolution of a
mechanical system using a variational principle defined in terms of a
function, the Lagrangian, $L:TQ\rightarrow \R$, where $Q$ is the
configuration manifold of the system. Nonholonomic mechanical systems,
which describe systems containing rolling or sliding contact (such as
wheels or skates), add constraints ---in the form of a non-integrable
subbundle ${\mathcal D}\subset TQ$--- to the variational principle
(see, for instance,~\cite
{bo:bloch-nonholonomic_mechanics_and_control}
and~\cite{ar:cendra_marsden_ratiu-geometric_mechanics_lagrangian_reduction_and_nonholonomic_systems}).

\smallskip

\noindent \textit{Numerical integrators and discrete mechanical
  systems.} As in many applications it is essential to predict the
evolution of a mechanical system, the equations of motion that can be
derived from the corresponding variational principle must be
solved. Solving these ordinary differential equations can be quite
difficult in practice, so numerical integrators are used to find
approximate solutions to those equations. The standard methods for
numerically approximating solutions of ODEs do not necessarily
preserve the structural characteristics of the solutions of the
equations of motion of mechanical systems
(see~\cite{bo:hairer_lubich_wanner-geometric_numerical_integration}). Discrete
mechanical systems were introduced as a way of modeling discrete-time
analogues of mechanical systems; the evolution of a discrete
mechanical system is also defined in terms of a variational principle
for the discrete Lagrangian $L_d:Q\times Q\rightarrow \R$; this
formalism is extended to deal with more general systems, including
forced discrete systems as well as discrete nonholonomic ones
(see~\cite{ar:marsden_west-discrete_mechanics_and_variational_integrators}
and~\cite{bo:cortes-non_holonomic}). The equations of motion of
discrete mechanical systems are algebraic equations whose solutions
are numerical integrators for the corresponding continuous system. In
many cases, these integrators have very good structural
characteristics (especially when considering long-time evolution),
that resemble those of the continuous system
(\cite{ar:patrick_cuell-error_analysis_of_variational_integrators_of_unconstrained_lagrangian_systems}
and~\cite{bo:hairer_lubich_wanner-geometric_numerical_integration}).

\smallskip

\noindent \textit{Symmetries and symmetry reduction.} It is a natural
idea to think that when a mechanical or, more generally, a dynamical
system has some degree of symmetry, it should be possible to gain some
insight into its dynamics by studying some other ``simplified system''
obtained by eliminating or locking the symmetry. This process is
usually known as the \jdef{reduction} of the given system and the
resulting system is known as the \jdef{reduced system}. In the case of
Classical Mechanics, this idea seems to go back as far as the work of
Lagrange. Over time, it has become a technique that has been applied
in both the Lagrangian and Hamiltonian formalisms, for unconstrained
systems as well as for holonomically and nonholonomically constrained
ones (see among \emph{many} other
references,~\cite{bo:bloch-nonholonomic_mechanics_and_control}~\cite{ar:cendra_marsden_ratiu-geometric_mechanics_lagrangian_reduction_and_nonholonomic_systems},~\cite{ar:arnlod-sur_la_geometrie_differentielle_des_groupes_de_lie_de_dimension_infinie_et_ses_applications_a_l'hydrodynamique_des_fluides_parfaits},~\cite{ar:smale-topology_and_mechanics_1,ar:smale-topology_and_mechanics_2}~\cite{ar:meyer-symmetries_and_integrals_in_mechanics},~\cite{ar:marsden_weinstein-reduction_of_symplectic_manifolds_with_symmetry},~\cite{bo:marsden_misiolek_ortega_perlmutter_ratiu-hamiltonian_reduction_by_stages},~\cite{ar:castrillonlopez_ratiu-reduction_of_principal_bundles_covariant_lagrange_poincare_equations}
and~\cite{ar:marrero_martin_martinez-discrete_lagrangian_and_hamiltonian_mechanics_on_lie_groupoids}). The
reduction process has also been applied to discrete-time mechanical
systems with and without constraints (see, for
instance,~\cite{ar:jalnapurkar_leok_marsden_west-discrete_routh_reducion},~\cite{ar:mclachlan_perlmutter-integrators_for_nonholonomic_mechanical_systems},~\cite{ar:iglesias_marrero_martin_martinez_padron-reduction_of_symplectic_lie_algebroids_by_a_lie_subalgebroid_and_a_symmetry_lie_group}
and~\cite{ar:fernandez_tori_zuccalli-lagrangian_reduction_of_discrete_mechanical_systems}).

It is well known that, in most instances, the reduction of a
mechanical system is not a mechanical system but, rather, a more
general dynamical system: that is, while the dynamics of a mechanical
system on $Q$ is defined using a variational principle for the
Lagrangian, defined on $TQ$ in the continuous case or on $Q\times Q$
in the discrete case (and, maybe, other additional data), the dynamics
of the reduced system is determined by a function that is usually not
defined on a tangent bundle or a Cartesian product (of a manifold with
itself). This can be problematic if one expects to analyze the reduced
system with the same techniques as the original one. That issue has
usually been solved by passing from the family of mechanical systems
to a larger class of dynamical systems, where there is a reduction
process that is closed within this larger class. Such is the case, for
example, of mechanical systems on Lie algebroids and Lie groupoids
(see~\cite{ar:iglesias_marrero_deDiego_martinez-discrete_nonholonomic_lagrangian_systems_on_lie_groupoids}
and~\cite{ar:iglesias_marrero_martin_martinez_padron-reduction_of_symplectic_lie_algebroids_by_a_lie_subalgebroid_and_a_symmetry_lie_group}). In
this paper we follow this guiding principle, but choose the larger
class following ideas adapted
from~\cite{bo:cendra_marsden_ratiu-lagrangian_reduction_by_stages}
and~\cite{ar:fernandez_tori_zuccalli-lagrangian_reduction_of_discrete_mechanical_systems_by_stages}.

\smallskip

\noindent \textit{Reduction by stages.} Sometimes, it may be convenient to
eliminate part of the symmetric behavior of a mechanical system, while
keeping some residual symmetry that could be analyzed at a later
point, if so desired.  In this case a second reduction step to
eliminate the residual symmetry is possible. A natural question is, in
that case, whether the result of this two-stage reduction is
equivalent to the full reduction of all the symmetries at one
time. The equivalence of the two-stage and one-stage reduction
processes has been established in several cases. For instance, for
Lagrangian systems without constraints by H. Cendra, J. Marsden and
T. Ratiu
in~\cite{bo:cendra_marsden_ratiu-lagrangian_reduction_by_stages}, for
Lagrangian systems with nonholonomic constraints by H. Cendra and
V. D\'{i}az
in~\cite{ar:cendra_diaz-lagrange_dalembert_poincare_equations_by_several_stages},
for Hamiltonian systems by J. Marsden et
al. in~\cite{bo:marsden_misiolek_ortega_perlmutter_ratiu-hamiltonian_reduction_by_stages}
and, for unconstrained discrete mechanical systems by the authors
in~\cite{ar:fernandez_tori_zuccalli-lagrangian_reduction_of_discrete_mechanical_systems_by_stages}.

\smallskip

\noindent \textit{Aims.} The main purpose of the present work is to
establish the same equivalence described in the previous paragraph for
nonholonomically constrained discrete-time mechanical systems. In this
respect, the paper is an extension
of~\cite{ar:fernandez_tori_zuccalli-lagrangian_reduction_of_discrete_mechanical_systems_by_stages}
to the constrained setting that
parallels~\cite{ar:cendra_diaz-lagrange_dalembert_poincare_equations_by_several_stages}
in the discrete-time context.  We stress that, for the present paper,
whether a discrete-time system is related to a (continuous-time)
mechanical system or not is irrelevant; in this respect, our analysis
and results are completely independent of the discretization process
chosen to produce the discrete dynamical system in question, if one
was used at all. As an aside, we also mention that, at the moment, the
very interesting subject of geometric discretization of nonholonomic
mechanical systems should be regarded as ``work in progress'', without
conclusive hard results on the quality of the numerical integrators
obtained.

\smallskip

\noindent \textit{Constructions and results.} Except for a few special
cases, the reduced system obtained from a nonholonomic discrete
mechanical system via the general reduction process defined
in~\cite{ar:fernandez_tori_zuccalli-lagrangian_reduction_of_discrete_mechanical_systems}
is not a discrete mechanical system, constrained or not. So, as we
mentioned above, the first step is to construct a family of dynamical
systems that contains all the systems of interest ---nonholonomic
discrete mechanical systems as well as their reductions. The discrete
Lagrange--D'Alembert--Poincar\'e systems (DLDPSs) form such a family:
one of these systems is determined by a fiber bundle
$\phi:E\rightarrow M$, a function $L_d:E\times M\rightarrow\R$, the
\jdef{discrete Lagrangian}, a \jdef{nonholonomic infinitesimal
  variation chaining map} $\mathcal{P}$ (see
Definition~\ref{def:chaining_map}) as well as a regular submanifold
$\mathcal{D}_d\subset E\times M$, the \jdef{kinematic constraints},
and a subbundle $\mathcal{D}\subset p_1^*TE$ (where
$p_1:E\times M\rightarrow E$ is the projection), the \jdef{variational
  constraints}.  All such systems are discrete-time dynamical systems
whose trajectories are determined by a variational principle. Examples
of DLDPSs include the nonholonomic discrete mechanical systems
(Example~\ref{ex:DNHMS_as_DLDPS}) and, when they are symmetric, their
reductions by the procedure defined
in~\cite{ar:fernandez_tori_zuccalli-lagrangian_reduction_of_discrete_mechanical_systems}
(Section~\ref{sec:nonholonomic_discrete_mechanical_systems_with_symmetry});
also, the discrete Lagrange--Poincar\'e systems considered
in~\cite{ar:fernandez_tori_zuccalli-lagrangian_reduction_of_discrete_mechanical_systems_by_stages}
are DLDPSs. A convenient notion of morphism between DLDPSs is
introduced and a category $\DLDPSC$ is so defined. The category
$\DLPSC$ of discrete Lagrange--Poincar\'e systems defined
in~\cite{ar:fernandez_tori_zuccalli-lagrangian_reduction_of_discrete_mechanical_systems_by_stages}
is a full subcategory of $\DLDPSC$.

Roughly speaking, a Lie group $G$ is a symmetry group of a DLDPS if it
acts on the underlying fiber bundle in such a way that it preserves
the different structures. When $G$ is a symmetry group of a DLDPS
$\mathcal{M}$, we construct a new DLDPS $\mathcal{M}/G$ that we call
the \jdef{reduced system}. In fact, the construction requires an
additional piece of data: an affine discrete connection on a certain
principal $G$-bundle; interestingly, we prove that the reduced systems
obtained using different affine discrete connections are always
isomorphic in $\DLDPSC$
(Proposition~\ref{prop:reduction_with_different_DC_are_isomorphic}). Also,
the reduction mapping $\mathcal{M}\rightarrow \mathcal{M}/G$ is a
morphism in $\DLDPSC$; Corollary~\ref{cor:4_pts} and
Theorem~\ref{thm:reconstruction} prove that the reduction mapping
determines a bijective correspondence between the trajectories of
${\mathcal M}$ and those of ${\mathcal M}/G$. It is important to
notice that both the notion of symmetry group and the reduction
process extend the ones already in use for nonholonomic discrete
mechanical systems as well as for discrete Lagrange--Poincar\'e
systems.

When $G$ is a symmetry group of the DLDPS $\mathcal{M}$ and
$H\subset G$ is a closed and normal subgroup, $H$ is a symmetry group
of $\mathcal{M}$, so we can consider the reduced system
$\mathcal{M}^H:=\mathcal{M}/H$ using a discrete affine connection
$\DCp{H}$. Then, under a condition on $\DCp{H}$, we prove that $G/H$
is a symmetry group of $\mathcal{M}^H$, so that we can consider a new
reduced system $\mathcal{M}^{G/H}:=\mathcal{M}^H/(G/H)$. One of the
main results of the paper, Theorem~\ref{thm:F_diffeomorphism}, is that
$\mathcal{M}^{G/H}$ is isomorphic in $\DLDPSC$ to $\mathcal{M}/G$.

\smallskip

\noindent \textit{Plan for the paper.}
Section~\ref{sec:revision_of_some_discrete_tools} reviews the notion
of affine discrete connection as well as some basic results on
principal bundles. Section~\ref{sec:discrete_LDP_systems} introduces
the DLDPSs and their dynamics.
Section~\ref{sec:nonholonomic_discrete_mechanical_systems_with_symmetry}
shows that both nonholonomic discrete mechanical systems as well as
their reduction (in the sense
of~\cite{ar:fernandez_tori_zuccalli-lagrangian_reduction_of_discrete_mechanical_systems})
are examples of DLDPS and, also, that their dynamics as DLDPSs is the
same as the ``classical
one''. Section~\ref{sec:categorical_formulation} introduces the
category $\DLDPSC$ whose objects are DLDPSs. Symmetries and a
reduction process in $\DLDPSC$ are analyzed in
Section~\ref{sec:Reduction_discrete_LDP_systems}; in particular, in
Section~\ref{sec:discrete_LL_systems_on_lie_groups}, we illustrate how
these ideas can be applied by studying the discrete $LL$ systems on a
Lie group $G$. Finally, Section~\ref{sec:reduction_by_two_stages}
establishes the equivalence between the two-stage and the single-stage
reduction process, under appropriate conditions.

\smallskip

\noindent\textit{Future work.} It would be very interesting to connect the
analysis of this paper with a discretization process for continuous
mechanical systems. This would allow, for instance, the estimation of
the error made when using a DLDPS as an approximation of a
(continuous) mechanical system. Indeed, a first step would be to
tackle this same problem with no constraints, that is, for discrete
Lagrange--Poincar\'e systems
(\cite{ar:fernandez_tori_zuccalli-lagrangian_reduction_of_discrete_mechanical_systems_by_stages}). It
should be noted that this error analysis is only known for
unconstrained systems
(see~\cite{ar:patrick_cuell-error_analysis_of_variational_integrators_of_unconstrained_lagrangian_systems})
and forced mechanical systems
(see~\cite{ar:deDiego_deAlmagro-variational_order_for_forced_lagrangian_systems}
and~\cite{ar:fernandez_graiffZurita_grillo-error_analysis_of_forced_discrete_mechanical_systems}). Another
avenue for exploration would be the study of possible Poisson
structures in DLDPSs: even though DLDPSs do not have a canonical
Poisson structure, some of them do (those coming from discrete
mechanical systems, for instance) and it would be interesting to see
how those structures behave under the reduction process.

\smallskip

\noindent\textit{Notation.} Throughout the paper many spaces are Cartesian
products. In general we denote the corresponding projections by
$p_k:\prod_{j=1}^N X_j\rightarrow X_k$ and the obvious
adaptations. Also, $l^X$ and $r^X$ will denote left and right smooth
actions of a Lie group on the manifold $X$. If $G$ acts on the left on
$X$ we denote the corresponding quotient map by
$\pi^{X,G}:X\rightarrow X/G$.


\section{Revision of some discrete tools}
\label{sec:revision_of_some_discrete_tools}

In this section we review some basic notions and results about affine
discrete connections and smooth fiber bundles.


\subsection{Affine discrete connections}
\label{subsec:affine_discrete_connections}

Let $l^Q:G\times Q\rightarrow Q$ be a smooth left action of the Lie
group $G$ on the manifold $Q$. We consider several other actions of
$G$; for example, we have the $G$ actions $l^{Q\times Q}$ and
$l^{Q\times Q_2}$ on $Q\times Q$ defined by
$l^{Q\times Q}_g(q_0,q_1):=(l^Q_g(q_0),l^Q_g(q_1))$ and
$l^{Q\times Q_2}_g(q_0,q_1):=(q_0,l^Q_g(q_1))$. We also consider the
left $G$-action on itself given by $l^G_g(g'):=g g' g^{-1}$.

\begin{definition}\label{def:affine_discrete_connection}
  Let $\gamma:Q \rightarrow G $ be a smooth $G$-equivariant map with
  respect to $l^{Q}$ and $l^{G}$,
  $\Gamma :=\{(q,l^{Q}_{\gamma(q)}(q)):q\in Q\}$ and
  $Hor\subset Q\times Q$ be an $l^{Q\times Q}$-invariant submanifold
  containing $\Gamma$. We say that $Hor$ defines an \jdef{affine
    discrete connection} $\DC$ on the principal $G$-bundle
  $\pi^{Q,G}:Q \rightarrow Q/G$ if
  $(id_Q \times \pi^{Q,G})|_{Hor}: Hor \rightarrow Q\times (Q/G)$ is
  an injective local diffeomorphism. We denote $Hor$ by $Hor_{\DC}$
  and we call $\gamma$ the level of $\DC$. \emph{As in this paper the
    only type of discrete connection that we consider is the affine,
    we will simply call them discrete connections}.
\end{definition}

Given a discrete connection $\DC$ on $\pi^{Q,G}:Q \rightarrow Q/G$,
the space
\begin{equation*}
  \jgU := l_{G}^{Q \times Q_2}(Hor_{\DC}) =
  \{(q_0,l_{g}^{Q}(q_1)) \in Q \times Q : (q_0,q_1) \in
  Hor_{\DC}, g \in G \},
\end{equation*}
is called the \jdef{domain} of $\DC$.
 
\begin{proposition}
  The space $\jgU$ is an open set in $Q \times Q$.
\end{proposition}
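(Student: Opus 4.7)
The plan is to express $\jgU$ as the preimage of an open set under a continuous map, and then conclude it is open.

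First I would establish the key identity
\begin{equation*}
  \jgU = (id_Q \times \pi^{Q,G})^{-1}\bigl((id_Q \times \pi^{Q,G})(Hor_{\DC})\bigr).
\end{equation*}
The $\subseteq$ inclusion is straightforward: if $(q_0,l^Q_g(q_1)) \in \jgU$ with $(q_0,q_1) \in Hor_{\DC}$, then $\pi^{Q,G}(l^Q_g(q_1)) = \pi^{Q,G}(q_1)$, so its image under $id_Q \times \pi^{Q,G}$ coincides with that of $(q_0,q_1) \in Hor_{\DC}$. For the $\supseteq$ inclusion, I would use that $\pi^{Q,G}:Q \to Q/G$ is a principal $G$-bundle, so the fibers are precisely the $G$-orbits; hence if $(a,b) \in Q\times Q$ satisfies $(a,\pi^{Q,G}(b)) = (a,\pi^{Q,G}(q_1))$ for some $(a,q_1) \in Hor_{\DC}$, there exists $g \in G$ with $b = l^Q_g(q_1)$, and therefore $(a,b) \in l^{Q\times Q_2}_g(Hor_{\DC}) \subset \jgU$.

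Next I would observe that, by hypothesis, $(id_Q \times \pi^{Q,G})|_{Hor_{\DC}}: Hor_{\DC} \to Q \times (Q/G)$ is an injective local diffeomorphism; in particular it is an open map, so its image is open in $Q \times (Q/G)$. Since $id_Q \times \pi^{Q,G}: Q \times Q \to Q \times (Q/G)$ is continuous (as $\pi^{Q,G}$ is smooth), the identity above exhibits $\jgU$ as the preimage of an open set, hence $\jgU$ is open in $Q \times Q$.

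The only genuinely substantive step is the $\supseteq$ inclusion in the key identity, which hinges on the principal bundle property that the $G$-action is free and transitive on the fibers of $\pi^{Q,G}$; this is what guarantees that agreement of two points of $Q$ after projecting to $Q/G$ lifts to a relation via the group action on $Q$. Everything else is a direct application of the fact that local diffeomorphisms are open maps together with continuity of $id_Q \times \pi^{Q,G}$.
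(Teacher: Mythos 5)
Your proof is correct and complete: the identity $\jgU = (id_Q \times \pi^{Q,G})^{-1}\bigl((id_Q \times \pi^{Q,G})(Hor_{\DC})\bigr)$ holds exactly as you argue (the nontrivial inclusion using that the fibers of $\pi^{Q,G}$ are the $G$-orbits), and openness follows since an injective local diffeomorphism is an open map and $id_Q\times\pi^{Q,G}$ is continuous. The paper itself does not prove this statement but defers to Proposition 2.4 of the cited reference on discrete connections; your argument is the natural self-contained one (identifying $\jgU$ as the saturation of $Hor_{\DC}$, hence as the preimage of its open image) and supplies exactly what the citation leaves out.
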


\begin{proof}
  See point 1 of Proposition 2.4
  in~\cite{ar:fernandez_zuccalli-a_geometric_approach_to_discrete_connections_on_principal_bundles}.
\end{proof}

\begin{proposition}\label{prop:existence_of_g_moving_to_hor}
  Let $\DC$ be a discrete connection with level $\gamma$ and domain
  $\jgU$ on the principal $G$-bundle $\pi^{Q,G} : Q \rightarrow
  Q/G$. For each $(q_0,q_1) \in \jgU$, there is a unique $g \in G$
  such that $(q_0,l_{g^{-1}}^{Q}(q_1)) \in Hor_{\DC}$.
\end{proposition}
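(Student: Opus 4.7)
The plan is to separate existence from uniqueness and read both off directly from the structural hypotheses on $\DC$.

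For existence, I would simply unpack the definition of the domain. By definition, $\jgU = l^{Q\times Q_2}_G(Hor_{\DC})$, so any $(q_0,q_1) \in \jgU$ can be written as $(q_0, l^Q_g(\tilde q_1))$ for some $g \in G$ and some $(q_0, \tilde q_1) \in Hor_{\DC}$. Then $\tilde q_1 = l^Q_{g^{-1}}(q_1)$, giving a $g$ with the required property. No further work is needed here.

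For uniqueness, suppose $g_1, g_2 \in G$ both satisfy $(q_0, l^Q_{g_i^{-1}}(q_1)) \in Hor_{\DC}$. Applying $(id_Q \times \pi^{Q,G})$ to both points yields $(q_0, \pi^{Q,G}(q_1))$ in each case, since $\pi^{Q,G}$ is $G$-invariant. Because $(id_Q \times \pi^{Q,G})|_{Hor_{\DC}}$ is injective by Definition~\ref{def:affine_discrete_connection}, we conclude $l^Q_{g_1^{-1}}(q_1) = l^Q_{g_2^{-1}}(q_1)$, i.e., $l^Q_{g_2 g_1^{-1}}(q_1) = q_1$. Finally, since $\pi^{Q,G}:Q\to Q/G$ is a principal $G$-bundle, the action $l^Q$ is free, so $g_2 g_1^{-1} = e$ and hence $g_1 = g_2$.

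There is no real obstacle: both claims are straightforward once the injectivity of $(id_Q\times \pi^{Q,G})|_{Hor_{\DC}}$ and the freeness of the principal action are invoked. The only subtle point worth stating explicitly in the write-up is that the conclusion $g_1=g_2$ really does need freeness of the $G$-action (not just injectivity of the chart on $Hor_{\DC}$), which is why the principal bundle hypothesis is indispensable.
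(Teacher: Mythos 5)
Your proof is correct. The paper itself gives no argument here --- it simply cites Proposition 2.5 of the Fern\'andez--Zuccalli paper on discrete connections --- so there is nothing to compare against directly; your write-up supplies the natural self-contained argument: existence by unwinding the definition of $\jgU$ as the $l^{Q\times Q_2}$-saturation of $Hor_{\DC}$, and uniqueness by combining the injectivity of $(id_Q\times\pi^{Q,G})|_{Hor_{\DC}}$ with the freeness of the action coming from the principal-bundle hypothesis. Your closing remark correctly isolates the one point where the principal-bundle structure (rather than just the injectivity condition in Definition~\ref{def:affine_discrete_connection}) is genuinely needed.
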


\begin{proof}
  See Proposition 2.5
  in~\cite{ar:fernandez_zuccalli-a_geometric_approach_to_discrete_connections_on_principal_bundles}.
\end{proof}

\begin{definition}\label{def:affine_discrete_connection_form}
  Given a discrete connection $\DC$ with domain $\jgU$ on the
  principal $G$-bundle $\pi^{Q,G} : Q \rightarrow Q/G$, we define its
  \jdef{discrete connection form}
  \begin{equation*}
    \DC : \jgU \subset Q \times Q \rightarrow G \text{ by } \DC(q_0,q_1):= g,
  \end{equation*}
  where $g \in G$ is the element that appears in
  Proposition~\ref{prop:existence_of_g_moving_to_hor}.
\end{definition}

In what follows we consider the open set
$\jgU':= (id \times \pi^{Q,G})(Hor_{\DC}) \subset Q \times (Q/G)$.

\begin{definition}\label{def:discrete_horizontal_lift}
  Let $\DC$ be a discrete connection on the principal $G$-bundle
  $\pi^{Q,G} : Q \rightarrow Q/G$. The \jdef{discrete horizontal lift}
  $h_d: \jgU' \rightarrow Hor_{\DC}$ is the inverse map of the
  injective local diffeomorphism
  $(id_Q \times \pi^{Q,G})|_{Hor_{\DC}} : Hor_{\DC} \rightarrow
  \jgU'$. That is 
  \begin{equation*}
    \HLd{q_0}(r_1) = \HL(q_0,r_1) :=(q_0,q_1) \quad \Leftrightarrow \quad 
    (q_0,q_1) \in Hor_{\DC} \stext{ and } \pi^{Q,G}(q_1)=r_1.
  \end{equation*}
  In addition we define $\HLds{q_0} := p_{2} \circ \HLd{q_0}$.
\end{definition}	

\begin{proposition}\label{prop:properties_affine_discrete_connection}
  Let $\DC$ be a discrete connection on the principal
  $G$-bundle $\pi^{Q,G} : Q \rightarrow Q/G$. Then,
  \begin{enumerate}
  \item \label{it::properties_affine_discrete_connection-smooth} the
    discrete connection form $\DC$ and the discrete horizontal lift
    $\HL$ are smooth maps and,
  \item \label{it::properties_affine_discrete_connection-G_equiv} if we
    consider the left $G$-actions on $G$ and on $Q \times (Q/G)$ given
    by $l^G$ and
    \begin{equation*}
      l_{g}^{Q \times (Q/G)}(q_0,r_1):=(l_{g}^{Q}(q_0),r_1),
    \end{equation*}
    and the diagonal action $l^{Q\times Q}$ on $Q \times Q$ then $\DC$
    and $\HL$ are $G$-equivariant.
  \item \label{it::properties_affine_discrete_connection-GxG_equiv} In
    general, for any $g_0,g_1 \in G$,
    \begin{equation}
      \label{eq:equivariance_of_connection_form}
      \DC(l_{g_0}^{Q}(q_0),l_{g_1}^{Q}(q_1)) =
      g_{1} \DC(q_0,q_1) g_{0}^{-1} \stext{ for all } (q_0,q_1) \in \jgU.
    \end{equation}
  \end{enumerate}
\end{proposition}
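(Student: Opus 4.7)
The plan is to dispatch smoothness first and then reduce both equivariance claims to the $l^{Q\times Q}$-invariance of $Hor_{\DC}$ and the uniqueness in Proposition~\ref{prop:existence_of_g_moving_to_hor}. Smoothness of $\HL$ is immediate from Definition~\ref{def:discrete_horizontal_lift}: $\HL$ is, by construction, the inverse of the injective local diffeomorphism $(id_Q\times \pi^{Q,G})|_{Hor_{\DC}}\colon Hor_{\DC}\to \jgU'$, hence smooth on $\jgU'$. For smoothness of $\DC$, given $(q_0,q_1)\in\jgU$ I would set $r_1:=\pi^{Q,G}(q_1)$ and produce the horizontal representative $\HLds{q_0}(r_1)$ via $\HL$; then $g:=\DC(q_0,q_1)$ is the unique element of $G$ satisfying $q_1=l_g^Q(\HLds{q_0}(r_1))$, and it depends smoothly on its arguments because $\pi^{Q,G}\colon Q\to Q/G$ is a principal bundle, so $g$ can be read off smoothly via any local trivialization around $r_1$. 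Composition of smooth maps then gives smoothness of $\DC$; alternatively one can cite the corresponding result in the companion paper~\cite{ar:fernandez_zuccalli-a_geometric_approach_to_discrete_connections_on_principal_bundles}.

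I would prove~\eqref{eq:equivariance_of_connection_form} directly, since the diagonal $G$-equivariance of $\DC$ claimed in item~\ref{it::properties_affine_discrete_connection-G_equiv} is just the special case $g_0=g_1$. Let $h:=\DC(q_0,q_1)$, so that $(q_0,l_{h^{-1}}^Q(q_1))\in Hor_{\DC}$ by Definition~\ref{def:affine_discrete_connection_form}. Applying $l_{g_0}^{Q\times Q}$ and using the $l^{Q\times Q}$-invariance of $Hor_{\DC}$ yields
\begin{equation*}
(l_{g_0}^Q(q_0),\, l_{g_0 h^{-1}}^Q(q_1))\in Hor_{\DC}.
\end{equation*}
Rewriting $l_{g_0 h^{-1}}^Q(q_1) = l_{(g_1 h g_0^{-1})^{-1}}^Q(l_{g_1}^Q(q_1))$ and invoking the uniqueness statement in Proposition~\ref{prop:existence_of_g_moving_to_hor} forces $\DC(l_{g_0}^Q(q_0), l_{g_1}^Q(q_1))=g_1 h g_0^{-1}$, which is exactly~\eqref{eq:equivariance_of_connection_form}. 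Specializing to $g_0=g_1=g$ and noting that $h\mapsto g h g^{-1}$ is the left action $l_g^G$ recovers the $G$-equivariance of $\DC$ required in item~\ref{it::properties_affine_discrete_connection-G_equiv}.

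The $G$-equivariance of $\HL$ follows by the same pattern: if $\HL(q_0,r_1)=(q_0,q_1)\in Hor_{\DC}$, then $l_g^{Q\times Q}(q_0,q_1)=(l_g^Q(q_0), l_g^Q(q_1))\in Hor_{\DC}$ by invariance, and $\pi^{Q,G}(l_g^Q(q_1))=r_1$ since $\pi^{Q,G}$ is $G$-invariant; the injectivity of $(id_Q\times \pi^{Q,G})|_{Hor_{\DC}}$ then forces $\HL(l_g^Q(q_0),r_1)=l_g^{Q\times Q}(\HL(q_0,r_1))$, which matches the $l^{Q\times(Q/G)}$-action on the source. The only mildly nontrivial step in the whole proof is the smooth extraction of $g$ in the argument for smoothness of $\DC$: this is where the principal bundle structure, rather than merely the invariance of $Hor_{\DC}$, is essential; the rest of the proposition reduces to formal manipulation with the invariance of $Hor_{\DC}$ and the uniqueness from Proposition~\ref{prop:existence_of_g_moving_to_hor}.
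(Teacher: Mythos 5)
Your proof is correct, but it is genuinely more self-contained than the paper's: the paper does not argue the proposition at all, instead citing Lemma~3.2, Theorem~3.4 and Theorem~4.4 of the companion work on discrete connections (with the remark that those results ``must be adapted to affine discrete connections''), and it obtains point~\ref{it::properties_affine_discrete_connection-G_equiv} from point~\ref{it::properties_affine_discrete_connection-GxG_equiv} together with the cited equivariance of $\HL$. You instead derive everything directly from Definition~\ref{def:affine_discrete_connection}: smoothness of $\HL$ as the inverse of the injective local diffeomorphism $(id_Q\times\pi^{Q,G})|_{Hor_{\DC}}$, smoothness of $\DC$ from $\HL$ composed with the (smooth) translation function of the principal bundle $\pi^{Q,G}$, and both equivariance statements by pushing the horizontal representative around with the $l^{Q\times Q}$-invariance of $Hor_{\DC}$ and the uniqueness in Proposition~\ref{prop:existence_of_g_moving_to_hor}. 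This buys a proof that visibly works in the affine case rather than an appeal to results stated for (non-affine) discrete connections, at the cost of redoing arguments the authors regard as already available. Two small points worth making explicit: before invoking the uniqueness of Proposition~\ref{prop:existence_of_g_moving_to_hor} for the pair $(l_{g_0}^{Q}(q_0),l_{g_1}^{Q}(q_1))$ you should note that this pair lies in $\jgU$, which follows immediately from your displayed membership $(l_{g_0}^{Q}(q_0),l_{g_0h^{-1}}^{Q}(q_1))\in Hor_{\DC}$ and the definition of $\jgU$ as $l^{Q\times Q_2}_G(Hor_{\DC})$; and in the $\HL$ computation the injectivity of $(id_Q\times\pi^{Q,G})|_{Hor_{\DC}}$ is what identifies $(l_g^Q(q_0),l_g^Q(q_1))$ with $\HL(l_g^Q(q_0),r_1)$, which you do say but which deserves the emphasis you give the smoothness step.
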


\begin{proof}
  All of the following references are
  from~\cite{ar:fernandez_zuccalli-a_geometric_approach_to_discrete_connections_on_principal_bundles}
  and must be adapted to affine discrete connections.
  Point~\ref{it::properties_affine_discrete_connection-smooth} is
  Lemma 3.2 (smoothness of $\DC$) and Point 2 in Theorem 4.4
  (smoothness of
  $\HL$). Point~\ref{it::properties_affine_discrete_connection-GxG_equiv}
  is part of Theorem 3.4 while
  Point~\ref{it::properties_affine_discrete_connection-G_equiv}
  follows from
  point~\ref{it::properties_affine_discrete_connection-GxG_equiv} just
  proved and point 2 in Theorem 4.4.
\end{proof}

\begin{proposition}
  Given a smooth function $\mathcal{A} :Q \times Q \rightarrow G$ such
  that~\eqref{eq:equivariance_of_connection_form} holds (with
  $\mathcal{A}$ instead of $\DC$), then
  $Hor:=\{(q_0,q_1)\in Q \times Q: \mathcal{A}(q_0,q_1)=e\}$ defines
  an affine discrete connection with level set
  $\gamma(q):=\mathcal{A}(q,q)^{-1}$ and whose discrete connection
  $1$-form is $\mathcal{A}$.
\end{proposition}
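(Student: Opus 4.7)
The plan is to check directly that the candidate data $(\gamma, Hor)$ satisfies the hypotheses of Definition~\ref{def:affine_discrete_connection} and then verify that running Definition~\ref{def:affine_discrete_connection_form} on the resulting discrete connection recovers $\mathcal{A}$. The key observation driving every step will be that the equivariance~\eqref{eq:equivariance_of_connection_form} for $\mathcal{A}$ essentially forces all the required properties: specializing the two parameters $g_0,g_1$ appropriately yields smoothness/equivariance of $\gamma$, the inclusion $\Gamma\subset Hor$, the invariance of $Hor$, and ultimately the computation of the connection form.

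First, I would handle the easy bookkeeping. Setting $g_0=g_1=g$ in~\eqref{eq:equivariance_of_connection_form} gives $\mathcal{A}(l_g^Q(q),l_g^Q(q)) = g\mathcal{A}(q,q)g^{-1}$, so $\gamma(q):=\mathcal{A}(q,q)^{-1}$ is smooth and satisfies $\gamma(l_g^Q(q))=g\gamma(q)g^{-1}=l^G_g(\gamma(q))$, i.e.\ is $G$-equivariant. Taking $g_0=e$, $g_1=\gamma(q)$ in~\eqref{eq:equivariance_of_connection_form} yields $\mathcal{A}(q,l^Q_{\gamma(q)}(q))=\gamma(q)\mathcal{A}(q,q) = e$, so $\Gamma\subset Hor$. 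Invariance of $Hor$ under $l^{Q\times Q}$ is immediate from the $g_0=g_1=g$ specialization: $\mathcal{A}(q_0,q_1)=e$ implies $\mathcal{A}(l_g^Q(q_0),l_g^Q(q_1))=geg^{-1}=e$.

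The main obstacle is showing that $Hor=\mathcal{A}^{-1}(e)$ is a regular submanifold and that $(id_Q\times\pi^{Q,G})|_{Hor}$ is an injective local diffeomorphism onto its image. For this I exploit the partial equivariance $\mathcal{A}(q_0,l_{g_1}^Q(q_1))=g_1\mathcal{A}(q_0,q_1)$ (set $g_0=e$): for fixed $q_0$, the restriction of $\mathcal{A}(q_0,\cdot)$ to the $G$-orbit through $q_1$ is, up to the right translation $h\mapsto h\mathcal{A}(q_0,q_1)$, just the orbit map $g_1\mapsto l_{g_1}^Q(q_1)$ composed with the inverse of the orbit parametrization. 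Since the $G$-action on the principal bundle $\pi^{Q,G}$ is free, the differential of $\mathcal{A}$ in the vertical direction of the second factor is a linear isomorphism onto $\mathfrak{g}$ at every point of $\mathcal{A}^{-1}(e)$. This simultaneously shows that $e$ is a regular value (so $Hor$ is a closed regular submanifold of codimension $\dim G$) and that the kernel $\{0\}\oplus V_{q_1}$ of $d(id_Q\times\pi^{Q,G})$ meets $T_{(q_0,q_1)}Hor$ trivially, so the restriction is an immersion; a dimension count ($\dim Hor = 2\dim Q-\dim G = \dim Q+\dim(Q/G)$) upgrades this to a local diffeomorphism. Injectivity follows by noting that if $(q_0,q_1),(q_0,q_1')\in Hor$ with $q_1'=l_g^Q(q_1)$, then $e=\mathcal{A}(q_0,l_g^Q(q_1))=g\mathcal{A}(q_0,q_1)=g$, so $q_1'=q_1$.

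Finally, I would identify the connection 1-form. For $(q_0,q_1)\in\jgU$, the defining condition of $\DC(q_0,q_1)$ is that $(q_0,l^Q_{\DC(q_0,q_1)^{-1}}(q_1))\in Hor$. Applying the same partial equivariance with $g_1=\DC(q_0,q_1)^{-1}$ gives
\begin{equation*}
e = \mathcal{A}(q_0,l^Q_{\DC(q_0,q_1)^{-1}}(q_1)) = \DC(q_0,q_1)^{-1}\mathcal{A}(q_0,q_1),
\end{equation*}
so $\DC(q_0,q_1)=\mathcal{A}(q_0,q_1)$, completing the identification. The whole argument is essentially algebraic once the regularity claim is in hand, so the one subtle point to write carefully is the vertical-surjectivity argument that makes $Hor$ a submanifold.
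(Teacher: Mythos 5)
Your proof is correct and complete; the paper itself only delegates this statement to Proposition 4.12 of an earlier paper by the same authors, and your argument is precisely the direct verification of Definition~\ref{def:affine_discrete_connection} (followed by the identification of the form via Definition~\ref{def:affine_discrete_connection_form}) that such a proof amounts to. In particular, the one genuinely nontrivial step --- using the partial equivariance $\mathcal{A}(q_0,l^Q_{g}(q_1))=g\,\mathcal{A}(q_0,q_1)$ together with freeness of the $G$-action to see that $D_2\mathcal{A}$ restricts to an isomorphism on the vertical subspace, whence $e$ is a regular value, $Hor$ has dimension $\dim Q+\dim(Q/G)$, and $(id_Q\times\pi^{Q,G})|_{Hor}$ is an injective local diffeomorphism --- is handled correctly, and your final computation also shows (implicitly) that the domain $\jgU$ is all of $Q\times Q$, so the connection form is globally $\mathcal{A}$ as claimed.
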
	

\begin{proof}
  This proof is analogue to the proof of Proposition 4.12
  in~\cite{ar:fernandez_tori_zuccalli-lagrangian_reduction_of_discrete_mechanical_systems}.
\end{proof}


\subsection{Principal bundles}
\label{subsec:principal_bundles}

Here we review a few basic notions and results on principal
bundles. We refer to Section 9
of~\cite{ar:fernandez_tori_zuccalli-lagrangian_reduction_of_discrete_mechanical_systems_by_stages}
and its references for additional details.

\begin{definition}\label{def:G_action_on_fiber_bundle}
  Let $G$ be a Lie group and $(E,M,\phi,F)$ a fiber bundle. We say
  that $G$ \jdef{acts on the fiber bundle} $E$ if there are free left
  $G$-actions $l^{E}$ and $l^{M}$ on $E$ and $M$ respectively and a
  right $G$-action $r^{F}$ on $F$ such that
  \begin{enumerate}
  \item $l^{M}$ induces a principal $G$-bundle structure
    $\pi^{M,G}: M \rightarrow M/G$,
  \item $\phi$ is a $G$-equivariant map for the given actions,
  \item for every $m \in M$ there is a trivializing chart
    $(U,\Phi_{U})$ of $E$ such that $U \subset M$ is $G$-invariant,
    $m \in U$ and, when considering the left $G$-action
    $l^{U \times F}$ on $U \times F$ given by
    $l_{g}^{U \times F}(m,f) := (l_{g}^{M}(m),r_{g^{-1}}^{F}(f))$, the
    map $\Phi_{U}$ is $G$-equivariant.
  \end{enumerate}
\end{definition}

\begin{remark}
  When a Lie group $G$ acts on the fiber bundle $(E,M,\phi,F)$ and on
  the manifold $F'$ by a right action, it is possible to construct an
  \jdef{associated bundle} on $M/G$ with total space $(E \times F')/G$
  and fiber $F \times F'$. The special case when $F'=G$ acting on
  itself by $r_{g}(h) := g^{-1}hg$ is known as the \jdef{conjugate
    bundle} and is denoted by $\ti{G}_E$.
\end{remark}

\begin{proposition}\label{prop:equivariant_diffeomorphisms}	
  Let $G$ be a Lie group that acts on the fiber bundle $(E,M,\phi,F)$
  and $\DC$ be a discrete connection on the principal $G$-bundle
  $\pi^{M,G} : M \rightarrow M/G$. We define
  $\ti{\Phi}_{\DC} : E \times M \rightarrow E \times G \times (M/G)$
  and
  $\ti{\Psi}_{\DC} : E \times G \times (M/G) \rightarrow E \times M$
  by
  \begin{gather*}
    \ti{\Phi}_{\DC}(\epsilon ,m) := (\epsilon,\DC(\phi(\epsilon),m),
    \pi^{M,G}(m)) \stext{ and } \ti{\Psi}_{\DC}(\epsilon,w,r) :=
    (\epsilon, l_{w}^{M}(\HLds{\phi(\epsilon)}(r))).
  \end{gather*}
  Then, $\ti{\Phi}_{\DC}$ and $\ti{\Psi}_{\DC}$ are smooth functions,
  inverses of each other. If we view $E\times M$ and
  $E\times G\times (M/G)$ as fiber bundles over $M$ via
  $\phi\circ p_1$, then $\ti{\Phi}_{\DC}$ and $\ti{\Psi}_{\DC}$ are
  bundle maps (over the identity). In addition, if we consider the
  left $G$-actions $l^{E \times M}$ and $l^{E \times M \times (M/G)}$
  defined by
  \begin{equation*}
    l_{g}^{E \times M}(\epsilon,m) := (l_{g}^{E}(\epsilon), l_{g}^{M}(m))
    \stext{ and } l_{g}^{E \times M \times (M/G)}(\epsilon,w,r) :=
    (l_{g}^{E}(\epsilon),l_{g}^{G}(w),r),
  \end{equation*}
  then $\ti{\Phi}_{\DC}$ and $\ti{\Psi}_{\DC}$ are $G$-equivariant and
  they induce diffeomorphisms
  $\Phi_{\DC}:(E \times M)/G \rightarrow \ti{G}_{E} \times (M/G)$ and
  $\Psi_{\DC}:\ti{G}_{E} \times (M/G) \rightarrow (E \times M)/G$.
\end{proposition}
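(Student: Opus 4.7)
I would organize the argument as four steps mirroring the four claims: smoothness, inverse property, bundle-map structure, and equivariance (with the induced quotient diffeomorphisms as an immediate corollary).

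\textbf{Step 1 (smoothness and bundle-map structure).} Smoothness of $\ti{\Phi}_{\DC}$ is immediate from Proposition~\ref{prop:properties_affine_discrete_connection}\eqref{it::properties_affine_discrete_connection-smooth} (smoothness of $\DC$), together with the smoothness of $\phi$ and $\pi^{M,G}$. Similarly, $\ti{\Psi}_{\DC}$ is smooth because $\HL$ (and hence $\HLds{\cdot}(\cdot)$) is smooth and $l^{M}$ is smooth. Both maps preserve the first coordinate, so viewed over $M$ via $\phi\circ p_1$ they are evidently bundle maps over the identity of $M$.

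\textbf{Step 2 (mutual inverses).} This is the computational heart. For $\ti{\Psi}_{\DC}\circ\ti{\Phi}_{\DC}(\epsilon,m)$, set $g:=\DC(\phi(\epsilon),m)$. By Proposition~\ref{prop:existence_of_g_moving_to_hor} and Definition~\ref{def:affine_discrete_connection_form}, $(\phi(\epsilon), l^{M}_{g^{-1}}(m))\in Hor_{\DC}$, and this point also projects under $\pi^{M,G}$ to $\pi^{M,G}(m)$. By the defining property of $\HL$ in Definition~\ref{def:discrete_horizontal_lift}, $\HLds{\phi(\epsilon)}(\pi^{M,G}(m)) = l^{M}_{g^{-1}}(m)$, and applying $l^{M}_{g}$ returns $m$. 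For the opposite composition, write $q_1:=\HLds{\phi(\epsilon)}(r)$, so $(\phi(\epsilon),q_1)\in Hor_{\DC}$ and $\pi^{M,G}(q_1)=r$. The $M/G$ coordinate of $\ti{\Phi}_{\DC}(\epsilon,l^{M}_{w}(q_1))$ is then $\pi^{M,G}(l^{M}_{w}(q_1))=r$, and using~\eqref{eq:equivariance_of_connection_form} with $g_0=e$, $g_1=w$, the $G$ coordinate is
\begin{equation*}
  \DC(\phi(\epsilon), l^{M}_{w}(q_1)) = w\,\DC(\phi(\epsilon),q_1)\,e^{-1} = w,
\end{equation*}
since $\DC$ vanishes (equals $e$) on $Hor_{\DC}$ by construction.

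\textbf{Step 3 ($G$-equivariance).} For $\ti{\Phi}_{\DC}$, the first and third coordinates are obviously equivariant (using $\phi\circ l^{E}_{g}=l^{M}_{g}\circ\phi$ for the third). For the second coordinate, apply~\eqref{eq:equivariance_of_connection_form} with $g_0=g_1=g$ to get
\begin{equation*}
  \DC(l^{M}_{g}(\phi(\epsilon)), l^{M}_{g}(m)) = g\,\DC(\phi(\epsilon),m)\,g^{-1} = l^{G}_{g}(\DC(\phi(\epsilon),m)),
\end{equation*}
which matches the $G$ component of $l^{E\times M\times(M/G)}_g$. Since $\ti{\Phi}_{\DC}$ is an equivariant diffeomorphism, its inverse $\ti{\Psi}_{\DC}$ is automatically equivariant as well; alternatively, one can verify this directly by unwinding the $G$-equivariance of $\HL$ from Proposition~\ref{prop:properties_affine_discrete_connection}\eqref{it::properties_affine_discrete_connection-G_equiv}.

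\textbf{Step 4 (induced diffeomorphisms on quotients).} The $G$-action on $E\times G\times(M/G)$ is trivial on the last factor and acts on $E\times G$ by $(\epsilon,w)\mapsto(l^{E}_{g}(\epsilon),g w g^{-1})$, which is exactly the action whose quotient defines $\ti{G}_E$ per the Remark. Hence $(E\times G\times(M/G))/G\cong \ti{G}_E\times(M/G)$ canonically, and the equivariant diffeomorphisms $\ti{\Phi}_{\DC},\ti{\Psi}_{\DC}$ descend to mutually inverse smooth maps $\Phi_{\DC}$ and $\Psi_{\DC}$ between $(E\times M)/G$ and $\ti{G}_E\times(M/G)$. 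The only place I anticipate needing care is the bookkeeping in Step 2, specifically making sure the two appearances of $\HLds{\phi(\epsilon)}$ (once as a map into $M$, once composed with $l^{M}_{w}$) are matched correctly against the uniqueness statement in Proposition~\ref{prop:existence_of_g_moving_to_hor}; everything else is routine application of results already recalled in Section~\ref{sec:revision_of_some_discrete_tools}.
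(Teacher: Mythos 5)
Your proof is correct, and it takes the expected route: the paper itself gives no details here, simply citing Proposition 2.6 of the authors' earlier work ``adapted to affine discrete connections,'' and your direct verification supplies exactly the computation that adaptation requires (the key points being $\DC\equiv e$ on $Hor_{\DC}$, the uniqueness in Proposition~\ref{prop:existence_of_g_moving_to_hor}, and the equivariance identity~\eqref{eq:equivariance_of_connection_form}). The only caveat, which the paper itself flags in the remark following the proposition, is that $\DC$ and $\HL$ are only defined on the open sets $\jgU$ and $\jgU'$, so all statements hold after restricting to the corresponding open subsets; your argument is consistent with the paper's convention of suppressing this.
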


\begin{proof}
  This is Proposition 2.6
  in~\cite{ar:fernandez_tori_zuccalli-lagrangian_reduction_of_discrete_mechanical_systems_by_stages}
  adapted to affine discrete connections.
\end{proof}

\begin{remark}
  The discrete connection $\DC$ need not be defined on $Q \times Q$
  but, rather, on the open subset $\jgU$. This restricts the domain of
  $\ti{\Psi}_{\DC}$ and $\ti{\Phi}_{\DC}$ to appropriate open sets,
  where the results of the Proposition
  \ref{prop:equivariant_diffeomorphisms} hold. We will ignore this
  point and keep working as if $\DC$ were globally defined in order to
  avoid a more involved notation.
\end{remark}

We have the commutative diagram
\begin{equation}\label{eq:diagram_ExM_to_reduced}
  \xymatrix{ {E\times M} \ar[r]^(.4){\ti{\Phi}_{\DC}}_(.4){\sim}
    \ar[d]_{\pi^{E\times M,G}} \ar[dr]^(.55){\Upsilon_{\DC}} & 
    {(E\times G) \times (M/G)} \ar[d]^{\pi^{E\times G,G}\times id_{M/G}} \\
    {(E\times M)/G} \ar[r]_(.45){\Phi_{\DC}}^(.45){\sim} &
    {\ti{G}_E\times (M/G)}}
\end{equation}
where $\Upsilon_{\DC}:E \times M \rightarrow \ti{G}_{E} \times (M/G)$
is defined as
\begin{equation}\label{eq:Upsilon_DC-def}
  \Upsilon_{\DC} := \Phi_{\DC} \circ \pi^{E\times M,G}
  = (\pi^{E\times G,G}\times id_{M/G})\circ \ti{\Phi}_{\DC}.
\end{equation}

\begin{lemma}\label{le:Upsilon_fiber_bundle}
  Let $G$ be a Lie group that acts on the fiber bundle $(E,M,\phi,F)$
  and $\DC$ be a discrete connection on the principal $G$-bundle
  $\pi^{M,G} : M \rightarrow M/G$. Then,
  $\Upsilon_{\DC} : E \times M \rightarrow \ti{G}_E \times (M/G)$
  defined by~\eqref{eq:Upsilon_DC-def} is a principal $G$-bundle.
\end{lemma}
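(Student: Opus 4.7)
The plan is to recognize $\Upsilon_{\DC}$ as the composition $\Phi_{\DC}\circ \pi^{E\times M,G}$ appearing in diagram~\eqref{eq:diagram_ExM_to_reduced}, where $\Phi_{\DC}$ is a diffeomorphism, and then transport the principal bundle structure of $\pi^{E\times M,G}$ across $\Phi_{\DC}$.

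First I would verify that $\pi^{E\times M,G}: E\times M \rightarrow (E\times M)/G$ is a principal $G$-bundle for the diagonal action $l^{E\times M}$. Since $\pi^{M,G}$ is a principal $G$-bundle by the hypotheses of Definition~\ref{def:G_action_on_fiber_bundle}, the action $l^M$ is free and proper; hence the diagonal action $l^{E\times M}$, whose second component is $l^M$, is itself free and proper, so the quotient map $\pi^{E\times M,G}$ carries a canonical principal $G$-bundle structure.

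Next I would invoke Proposition~\ref{prop:equivariant_diffeomorphisms} for the fact that $\Phi_{\DC}$ is a diffeomorphism. From $\Upsilon_{\DC} = \Phi_{\DC}\circ \pi^{E\times M,G}$ it then follows that $\Upsilon_{\DC}$ is smooth and surjective, that its fibers coincide with those of $\pi^{E\times M,G}$ (and hence are precisely the orbits of $l^{E\times M}$), and that any principal bundle trivialization of $\pi^{E\times M,G}$ over an open $V\subset (E\times M)/G$ pushes forward, via $\Phi_{\DC}$, to a principal bundle trivialization of $\Upsilon_{\DC}$ over the open set $\Phi_{\DC}(V)\subset \ti{G}_E\times(M/G)$. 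This exhibits $\Upsilon_{\DC}$ as a principal $G$-bundle. The argument is essentially a formal consequence of Proposition~\ref{prop:equivariant_diffeomorphisms}, so no step presents a real obstacle.
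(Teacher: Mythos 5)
Your argument is correct and is exactly the one the paper intends: the paper defines $\Upsilon_{\DC}=\Phi_{\DC}\circ\pi^{E\times M,G}$ in diagram~\eqref{eq:diagram_ExM_to_reduced} and simply cites Lemma 2.8 of the earlier reference, whose proof is this same transport of the quotient principal bundle structure across the diffeomorphism $\Phi_{\DC}$ of Proposition~\ref{prop:equivariant_diffeomorphisms}. Your properness argument for the diagonal action (deduced from properness of $l^M$ alone) is sound, and the rest is the formal pushforward of trivializations you describe.
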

\begin{proof}
  This is Lemma 2.8
  in~\cite{ar:fernandez_tori_zuccalli-lagrangian_reduction_of_discrete_mechanical_systems_by_stages}
  adapted to affine discrete connections.
\end{proof}

All together, we have the following commutative diagram
\begin{equation}\label{eq:ExM_and_tiGxM/G}
  \xymatrix{
    {E} \ar[d]_{\pi^{M,G}\circ \phi} & 
    {E\times M} \ar[l]_{p_1} \ar[dr]^{\Upsilon_{\DC}} \ar[r]^(.35){\ti{\Phi}_{\DC}}
    & {(E\times G)\times (M/G)} \ar[d]^{\pi^{E\times G,G}\times id_{M/G}}\\
    {M/G} & {} & {\ti{G}_E \times (M/G)} \ar[ll]^{p^{M/G}\circ p_1}\\
  }
\end{equation}

\begin{proposition}\label{prop:bundle_image_submanifold}
  Let $\rho:X\rightarrow Y$ be a principal G-bundle, $Z \subset X$ a
  $G$-invariant regular submanifold, and $S:=\rho(Z)$. Then $S$ is a
  regular submanifold of Y.
\end{proposition}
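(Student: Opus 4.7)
The plan is to argue locally on $Y$: since being a regular submanifold is a local property, it suffices to show that each $y_0 \in S$ has an open neighborhood $U \subset Y$ for which $S \cap U$ is a regular submanifold of $U$. Fix such a $y_0$ and choose a trivializing neighborhood $U$ for the principal $G$-bundle $\rho$, yielding a $G$-equivariant diffeomorphism $\rho^{-1}(U) \cong U \times G$ under which $\rho$ becomes projection on the first factor and the $G$-action becomes (right, say) multiplication on the second factor.

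Writing $Z' := Z \cap \rho^{-1}(U)$, which is an open subset of $Z$ and hence a $G$-invariant regular submanifold of $U \times G$, I would first analyze its shape. Because the $G$-action is free and transitive along each fiber $\{u\}\times G$, $G$-invariance forces each such fiber either to lie wholly inside $Z'$ or to be disjoint from it. Consequently $Z' = W_0 \times G$ as a set, where $W_0 := \{u \in U : (u,e) \in Z'\}$, and in particular $\rho(Z') = W_0$ under the trivialization.

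The crux is then promoting $W_0$ from a mere subset to a regular submanifold of $U$. I would do this via a transversality argument: the section $s : U \to U \times G$, $s(u) := (u,e)$, is a smooth embedding with $W_0 = s^{-1}(Z')$. At any $u \in W_0$, the image $ds_u(T_u U) = T_u U \oplus \{0\}$ is ``horizontal'', while $T_{(u,e)}Z'$ automatically contains the ``vertical'' subspace $\{0\} \oplus T_e G$, since the entire $G$-orbit through $(u,e)$ is contained in $Z'$. These two subspaces span all of $T_{(u,e)}(U \times G)$, so $s$ is transverse to $Z'$; the standard preimage theorem then gives that $W_0 = s^{-1}(Z')$ is a regular submanifold of $U$. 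Since $S \cap U = W_0$ under the trivialization and $U$ is open in $Y$, varying $y_0$ delivers the result.

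The main conceptual point, and the place where one could go wrong, is the observation in the last paragraph that $G$-invariance forces $T_{(u,e)}Z'$ to include the full vertical direction. This is what makes transversality of the section $s$ with $Z'$ automatic, and hence gives smoothness of $W_0$ for free. Once that is in place the argument is essentially bookkeeping with local trivializations.
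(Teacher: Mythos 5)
Your argument is correct and follows essentially the same route as the paper's proof: localize via a trivializing chart, consider the section $u\mapsto(u,e)$, and use $G$-invariance of $Z$ to see that the vertical directions lie in the tangent space of the (trivialized) $Z$, so the section is transverse and the preimage theorem applies. The only cosmetic difference is that you make explicit the fiberwise saturation $Z'=W_0\times G$, which the paper leaves implicit in the identity $S\cap U=i^{-1}(\ti{Z})$.
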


\begin{proof}
  The statement can be proved locally, that is, it suffices to show
  that for each $s\in S$ there is an open subset $U\subset Y$ such
  that $s\in U$ and $(S\cap U) \subset U$ is a regular submanifold.

  As $\rho$ is a principal $G$-bundle, for each $s\in S$, there are an
  open subset $U\subset Y$ such that $s\in U$ and a diffeomorphism
  $\Phi_U:\rho^{-1}(U) \rightarrow U \times G$ that is $G$-equivariant
  (for $l^{U\times G}_g(u,g'):=(u,gg')$) and that
  $p_1\circ \Phi_U = \rho|_{\rho^{-1}(U)}$.

  As $Z\subset X$ is a regular submanifold and $\rho^{-1}(U)$ is an
  open subset of $X$, $Z\cap \rho^{-1}(U)$ is a regular submanifold of
  $\rho^{-1}(U)$. Then, as $\Phi_U$ is a diffeomorphism,
  $\ti{Z}:=\Phi_U(Z\cap \rho^{-1}(U))$ is a regular submanifold of
  $U\times G$. Furthermore, as $Z$ is $G$-invariant and $\Phi_U$ is
  $G$-equivariant, $\ti{Z}$ is $G$-invariant.

  Let $i:U\rightarrow U\times G$ be given by $i(u):=(u,e)$, where $e$
  is the identity of $G$; it is easy to check that
  $S\cap U=i^{-1}(\ti{Z})$. Then $i$ is smooth and, furthermore, for
  each $s'\in S\cap U$,
  $di(s')(T_{s'}U) = T_{s'}U\oplus \{0\} \subset T_{(s',e)} (U\times
  G)$. On the other hand, as $(s',e)\in \ti{Z}$ and $\ti{Z}$ is
  $G$-invariant, we have that
  $\{0\}\oplus T_e G \subset T_{(s',e)} \ti{Z} \subset T_{(s',e)}
  (U\times G)$. Then,
  $T_{i(s')}\ti{Z} \oplus di(s')(T_{s'}U) = T_{i(s')}(U\times G)$ and
  $i$ is transversal to $\ti{Z}$, so that $S\cap U=i^{-1}(\ti{Z})$ is
  a regular submanifold of $U$ (see Theorem 6.30
  in~\cite{bo:lee-introduction_to_smooth_manifolds}).
\end{proof}


\section{Discrete Lagrange--D'Alembert--Poincar\'e systems}
\label{sec:discrete_LDP_systems}

In this section we introduce a type of discrete-time dynamical system
that contains, among other examples, all nonholonomic discrete
mechanical systems as well as their reductions, as defined
in~\cite{ar:fernandez_tori_zuccalli-lagrangian_reduction_of_discrete_mechanical_systems}.


\subsection{Some definitions}
\label{sec:some_definitions}

Given a fiber bundle $\phi:E \rightarrow M$ we denote
$C'(E) := E \times M$, seen as a fiber bundle over $M$ by
$\phi \circ p_1$. We define the \jdef{discrete second order manifold}
$C''(E):= (E \times M) \times_{p_2,\phi \circ p_1} (E \times M)$
considered as a fiber bundle over $M$ by $\ti{p_2}:=p_2|_{C''(E)}$ for
the projection $p_2:E\times M\times E\times M\rightarrow M$.

\begin{remark}
  Given a fiber bundle $\phi:E \rightarrow M$, the second order
  manifold $\ti{p_2}:C''(E) \rightarrow M$ is isomorphic as a fiber
  bundle to the fiber bundle
  $\phi \circ p_2 : E \times E \times M \rightarrow M$ with
  $F_E((\epsilon_0,m_1),(\epsilon_1,m_2)) :=
  (\epsilon_0,\epsilon_1,m_1)$.
\end{remark}

\begin{definition}
  Given a fiber bundle $\phi:E \rightarrow M$, a \jdef{discrete path}
  in $C'(E)$ is a set
  $(\epsilon_\cdot,m_\cdot) =
  ((\epsilon_0,m_1),\ldots,(\epsilon_{N-1},m_N))$ where
  $((\epsilon_k,m_{k+1}),(\epsilon_{k+1},m_{k+2}))\in C''(E)$ for
  $k=0,\ldots,N-2$.
\end{definition}

\begin{definition}\label{def:chaining_map}
  Let $\phi:E \rightarrow M$ be a fiber bundle and $\mathcal{D}$ be a
  subbundle of the pullback bundle $p_{1}^{*}(TE)\subset T(C'(E))$. A
  \jdef{nonholonomic infinitesimal variation chaining map} (NIVCM)
  $\mathcal{P}$ on $(E,\mathcal{D})$ is a homomorphism of vector
  bundles over $\ti{p}_1$, according to the following commutative
  diagram
  \begin{equation*}
    \xymatrix{
      {\mathcal{D}} \ar[d] & \ti{p_{34}}^{*}(\mathcal{D}) \ar[l] \ar[d]
      \ar[r]^{\mathcal{P}} & {\ker(d\phi)} \ar[d]  \ar@{^{(}->}[r] &
      {TE} \ar[dl]\\
      {E\times M} & {C''(E)} 
      \ar[l]^{\ti{p_{34}}} \ar[r]_{\ti{p_1}} & {E} & {}
    }
  \end{equation*}
  where $\ti{p_1} ((\epsilon_0,m_1),(\epsilon_1,m_2)) := \epsilon_0$
  and
  $\ti{p_{34}} ((\epsilon_0,m_1),(\epsilon_1,m_2)) :=
  (\epsilon_1,m_2)$.
\end{definition}

\begin{remark}
  The fiber of the bundle $p_{1}^{*}(TE)$ on $(\epsilon,m)$ consists
  of vectors of the form
  $(\delta \epsilon,0) \in T_{(\epsilon,m)}(C'(E))$.
\end{remark}

\begin{definition}\label{def:ininitesimal_variation}
  Let $\phi:E \rightarrow M$ be a fiber bundle,
  $\mathcal{D}\subset p_1^*TE$ be a subbundle, $\mathcal{P}$ be a
  NIVCM on $(E,\mathcal{D})$ and
  $(\epsilon_\cdot,m_\cdot) = ((\epsilon_0,m_1), \ldots,
  (\epsilon_{N-1},m_N))$ be a discrete path in $C'(E)$. An
  \jdef{infinitesimal variation} over $(\epsilon_\cdot,m_\cdot)$ is a
  tangent vector
  $(\delta\epsilon_\cdot,\delta m_\cdot) = ((\delta\epsilon_0, \delta
  m_1), \ldots, (\delta\epsilon_{N-1}, \delta m_N))\in
  T_{(\epsilon_\cdot,m_\cdot)}(C'(E)^{N})$ such that
  \begin{equation}\label{eq:infinitesimal_varation_1}
    \delta m_k=d\phi(\epsilon_k)(\delta\epsilon_k) \stext{ with } k=1,\ldots,N-1.
  \end{equation}
  A \jdef{nonholonomic infinitesimal variation over
    $(\epsilon_\cdot,m_\cdot)$ with fixed endpoints} is an
  infinitesimal variation $(\delta\epsilon_\cdot,\delta m_\cdot)$ over
  $(\epsilon_\cdot,m_\cdot)$ such that
  \begin{equation}\label{eq:infinitesimal_varation_2}
    \begin{split}
      \delta m_N =& 0,\\
      \delta \epsilon_{N-1} =& \ti{\delta \epsilon_{N-1}},\\
      \delta \epsilon_k =& \ti{\delta \epsilon_k} +
      \mathcal{P}((\epsilon_{k},m_{k+1}),(\epsilon_{k+1},m_{k+2}))(\ti{\delta
        \epsilon_{k+1}},0), \text{ if } k=1,\ldots,N-2,\\
      \delta \epsilon_0 =&
      \mathcal{P}((\epsilon_{0},m_{1}),(\epsilon_{1},m_{2}))(\ti{\delta
        \epsilon_{1}},0),
    \end{split}
  \end{equation}
  where
  $(\ti{\delta \epsilon_k},0)\in \mathcal{D}_{(\epsilon_k,m_{k+1})}$
  is arbitrary for $k=1,\ldots,N-1$.
\end{definition}

\begin{definition}
  Let $\phi:E \rightarrow M$ be a fiber bundle. A \jdef{discrete
    Lagrange--D'Alembert--Poincar\'e system} (DLDPS) over $E$ is a
  collection
  $\mathcal{M} := (E,L_d,\mathcal{D}_d,\mathcal{D},\mathcal{P})$ where
  $L_d:C'(E) \rightarrow \R$ is a smooth function, the \jdef{discrete
    Lagrangian}, $\mathcal{D}_d\subset C'(E)$ is a regular
  submanifold, the \jdef{kinematic constraints}, $\mathcal{D}$ is a
  subbundle of $p_{1}^{*}(TE)$, the \jdef{variational constraints},
  and $\mathcal{P}$ is a NIVCM over $(E,\mathcal{D})$.
\end{definition}

\begin{definition}\label{def:trajectory_DLDPS}
  Let $\mathcal{M} = (E,L_d,\mathcal{D}_d,\mathcal{D},\mathcal{P})$ be
  a DLDPS. The \jdef{discrete action} of $\mathcal{M}$ is a function
  $S_d :C'(E)^{N} \rightarrow \R$ defined by
  $S_d(\epsilon_\cdot,m_\cdot) := \sum_{k=0}^{N-1}
  L_{d}(\epsilon_{k},m_{k+1}) $. A \jdef{trajectory of} $\mathcal{M}$
  is a discrete path $(\epsilon_\cdot,m_\cdot) \in C'(E)^{N}$ such
  that $(\epsilon_{k},m_{k+1})\in \mathcal{D}_d$ for all
  $k=0,\ldots,N-1$ and
  \begin{equation*}
    dS_{d}(\epsilon_\cdot,m_\cdot)(\delta\epsilon_\cdot,\delta m_\cdot)=0
  \end{equation*}
  for all nonholonomic infinitesimal variations
  $(\delta\epsilon_\cdot,\delta m_\cdot)$ on
  $(\epsilon_\cdot,m_\cdot)$ with fixed endpoints.
\end{definition}

Given a DLDPS
$\mathcal{M} = (E,L_d,\mathcal{D}_d,\mathcal{D},\mathcal{P})$ we have
the vector bundle $(\ti{p_{34}}^*(\mathcal{D}))^*\rightarrow
C''(E)$. Let $\nu_d$ be the smooth section of this bundle defined by
\begin{equation}\label{eq:section_of_motion-def}
  \begin{split}
    \nu_d((\epsilon_0,m_1),(\epsilon_1,m_2)) :=&
    D_{1}L_{d}(\epsilon_1,m_2)\circ dp_{1}(\epsilon_1,m_2) \\& +
    D_{2}L_{d}(\epsilon_0,m_1) \circ d(\phi\circ p_1)(\epsilon_1,m_2)
    \\&+ D_{1}L_{d}(\epsilon_0,m_1) \circ \mathcal{P}(
    (\epsilon_{0},m_{1}),(\epsilon_{1},m_{2})).
  \end{split}
\end{equation}
The next result characterizes the trajectories of a DLDPS in terms of
its equations of motion.

\begin{proposition}\label{prop:dynamics_DLDPS}
  Let $\mathcal{M} = (E,L_d,\mathcal{D}_d,\mathcal{D},\mathcal{P})$ be
  a DLDPS and $(\epsilon_\cdot,m_\cdot)$ be a discrete path in
  $C'(E)$. Then $(\epsilon_\cdot,m_\cdot)$ is a trajectory of
  $\mathcal{M}$ if and only if
  \begin{equation}
    \label{eq:evolution}
    \begin{split}
      (\epsilon_{k},m_{k+1}) \in \ & \mathcal{D}_d \stext{ for all }
      k=0,\ldots,N-1
      \stext{ and }\\
      \nu_d((\epsilon_{k-1},m_k),(\epsilon_k,m_{k+1})) =\ & 0 \stext{
        for all } k=1,\ldots,N-1,
    \end{split}
  \end{equation}
  where $\nu_d$ is the section defined
  by~\eqref{eq:section_of_motion-def}.
 \end{proposition}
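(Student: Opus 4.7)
The plan is to carry out the standard discrete variational calculation: differentiate $S_d$, impose the nonholonomic variations with fixed endpoints, and rearrange the result into a sum indexed by the free parameters $\ti{\delta\epsilon_k}$ whose coefficients must be exactly the components of $\nu_d$. Throughout, the second equivalence in the proposition should follow from the fundamental lemma of the calculus of variations applied to sections of $(\ti{p}_{34}^{*}\mathcal{D})^{*}$.

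First I would expand
\begin{equation*}
  dS_d(\epsilon_\cdot,m_\cdot)(\delta\epsilon_\cdot,\delta m_\cdot)
  = \sum_{k=0}^{N-1}\bigl[D_1 L_d(\epsilon_k,m_{k+1})(\delta\epsilon_k) + D_2 L_d(\epsilon_k,m_{k+1})(\delta m_{k+1})\bigr].
\end{equation*}
A shift of index $j=k+1$ in the second sum combined with the fixed-endpoint condition $\delta m_N=0$ converts it into $\sum_{j=1}^{N-1} D_2L_d(\epsilon_{j-1},m_j)(\delta m_j)$, and then~\eqref{eq:infinitesimal_varation_1} replaces each $\delta m_j$ by $d\phi(\epsilon_j)(\delta\epsilon_j)$. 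The critical simplification, which I expect to be the cleanest way through the calculation, is that $\mathcal{P}$ takes values in $\ker(d\phi)$ by definition; consequently, when one substitutes the expressions~\eqref{eq:infinitesimal_varation_2} for $\delta\epsilon_k$, the $\mathcal{P}$-contributions are annihilated by $d\phi(\epsilon_k)$, so $d\phi(\epsilon_k)(\delta\epsilon_k) = d\phi(\epsilon_k)(\ti{\delta\epsilon_k})$ for every $k=1,\ldots,N-1$.

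Next I would substitute the formulas~\eqref{eq:infinitesimal_varation_2} into the $D_1$-sum and collect by the free parameter $\ti{\delta\epsilon_j}$ for $j=1,\ldots,N-1$. The $k=j$ term of the $D_1$-sum contributes $D_1L_d(\epsilon_j,m_{j+1})(\ti{\delta\epsilon_j})$ (the $k=N-1$ boundary case fits the same pattern because $\delta\epsilon_{N-1}=\ti{\delta\epsilon_{N-1}}$), while the $k=j-1$ term contributes $D_1L_d(\epsilon_{j-1},m_j)\circ\mathcal{P}((\epsilon_{j-1},m_j),(\epsilon_j,m_{j+1}))(\ti{\delta\epsilon_j},0)$ (the $k=0$ boundary case agrees with this pattern by construction of the variation at $\epsilon_0$). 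Combined with $D_2L_d(\epsilon_{j-1},m_j)(d\phi(\epsilon_j)(\ti{\delta\epsilon_j}))$ from the rewritten $D_2$-sum, the coefficient of $\ti{\delta\epsilon_j}$ is precisely
\begin{equation*}
  \nu_d((\epsilon_{j-1},m_j),(\epsilon_j,m_{j+1}))(\ti{\delta\epsilon_j},0),
\end{equation*}
as one reads off from~\eqref{eq:section_of_motion-def} after unwinding $dp_1$ and $d(\phi\circ p_1)$ on vectors of the form $(\ti{\delta\epsilon_j},0)\in\mathcal{D}_{(\epsilon_j,m_{j+1})}$.

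Finally, the identity
\begin{equation*}
  dS_d(\epsilon_\cdot,m_\cdot)(\delta\epsilon_\cdot,\delta m_\cdot)
  = \sum_{j=1}^{N-1} \nu_d((\epsilon_{j-1},m_j),(\epsilon_j,m_{j+1}))(\ti{\delta\epsilon_j},0)
\end{equation*}
together with the arbitrariness of each $(\ti{\delta\epsilon_j},0)\in\mathcal{D}_{(\epsilon_j,m_{j+1})}$ (independently, because the $\ti{\delta\epsilon_j}$ are uncoupled parameters for nonholonomic variations with fixed endpoints) yields the equivalence of $dS_d=0$ on all such variations with the vanishing of $\nu_d$ at every consecutive pair. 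Since the kinematic constraint condition $(\epsilon_k,m_{k+1})\in\mathcal{D}_d$ is part of the definition of a trajectory by Definition~\ref{def:trajectory_DLDPS}, it is carried through unchanged. The main obstacle is really bookkeeping: keeping the boundary indices $k=0$ and $k=N-1$ consistent with the interior formula while invoking $\mathcal{P}\subset\ker(d\phi)$ at the right moment to collapse the $D_2$ terms onto $\ti{\delta\epsilon_j}$ alone.
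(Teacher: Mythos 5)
Your proposal is correct and follows essentially the same route as the paper: the paper's proof consists of exactly this "straightforward but lengthy computation" (which it does not spell out) yielding $dS_d(\epsilon_\cdot,m_\cdot)(\delta\epsilon_\cdot,\delta m_\cdot)=\sum_{k=1}^{N-1}\nu_d((\epsilon_{k-1},m_k),(\epsilon_k,m_{k+1}))(\ti{\delta\epsilon_k},0)$, followed by the same appeal to the arbitrariness of the $\ti{\delta\epsilon_k}\in\mathcal{D}_{(\epsilon_k,m_{k+1})}$. Your index bookkeeping at $k=0$ and $k=N-1$ and the use of $\mathcal{P}$ taking values in $\ker(d\phi)$ to collapse the $D_2$ terms are exactly the points the paper leaves implicit, and they check out.
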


\begin{proof}
  Let $(\delta \epsilon_\cdot, \delta m_\cdot)$ be a nonholonomic
  infinitesimal variation over $(\epsilon_\cdot, m_\cdot)$ with fixed
  endpoints.  A straightforward but lengthy computation using
  Definition~\ref{def:ininitesimal_variation} shows that
  \begin{equation*}
    \begin{split}
      dS_{d}( \epsilon_{\cdot },m_{\cdot }) (\delta \epsilon_{\cdot},
      \delta m_{\cdot }) =& \sum_{k=1}^{N-1}\big(D_{1}L_{d}(\epsilon
      _{k}, m_{k+1})\circ dp_{1}(\epsilon_k, m_{k+1})
      \\&\phantom{\sum\big(} +D_{1}L_{d}(\epsilon_{k-1},
      m_{k}) \circ \mathcal{P}((\epsilon_{k-1}, m_{k}),(\epsilon_{k},
      m_{k+1})) \\&\phantom{\sum\big(}+
      D_{2}L_{d}(\epsilon_{k-1}, m_{k}) \circ d(\phi\circ
      p_{1})(\epsilon_{k}, m_{k+1})\big)(\ti{\delta \epsilon_{k}},0).
    \end{split}
  \end{equation*}
  As the
  $\ti{\delta \epsilon_k}\in \mathcal{D}_{(\epsilon_k,m_{k+1})}$ are
  arbitrary, the result then follows by
  Definition~\ref{def:trajectory_DLDPS}.
\end{proof}

We refer to condition~\eqref{eq:evolution} as the \jdef{equations of
  motion} of the system.

\begin{example}\label{ex:DNHMS_as_DLDPS}
  We recall
  from~\cite{ar:fernandez_tori_zuccalli-lagrangian_reduction_of_discrete_mechanical_systems}
  (Definition 3.1) that a discrete nonholonomic mechanical system is a
  collection $(Q,L_d,\mathcal{D}_d,\mathcal{D}^{nh})$ where $Q$ is a
  differentiable manifold, $L_d:Q \times Q \rightarrow \R$ is a smooth
  function, $\mathcal{D}^{nh}$ is a subbundle of $TQ$ and
  $\mathcal{D}_d$ is a regular submanifold of $Q \times Q$.  In an
  analogous way to what happens with discrete mechanical systems and
  the discrete Lagrange--Poincar\'e systems
  in~\cite{ar:fernandez_tori_zuccalli-lagrangian_reduction_of_discrete_mechanical_systems_by_stages}
  (Example 3.12), a discrete nonholonomic mechanical system can be
  seen as a discrete Lagrange--D'Alembert--Poincar\'e system with
  $\phi=id_{Q}$ (so that $C'(E)=Q\times Q$), $\mathcal{P}=0$, the same
  $\mathcal{D}_d$ as kinematic constraints and
  $\mathcal{D}:=p_{1}^{*}(\mathcal{D}^{nh})\subset p_{1}^{*}(TQ)$. In
  this case, a discrete path in $C'(E)$ can be identified with path
  $q_\cdot=(q_0,\ldots,q_N)\in Q^{N+1}$ and the equations of
  motion~\eqref{eq:evolution} become
  \begin{gather*}
    (q_k,q_{k+1})\in \mathcal{D}_d \stext{ for all } k=0,\ldots, N-1
    \stext{ and }\\
    D_1L_d(q_k,q_{k+1})+D_2L_d(q_{k-1},q_k) \in (\mathcal{D}^{nh}_{q_k})^\circ
    \stext{ for all } k=1,\ldots, N-1
  \end{gather*}
  for $k=1,\ldots N-1$, which are the same equations of motion of the
  discrete nonholonomic mechanical system
  $(Q,L_d,\mathcal{D}_d,\mathcal{D}^{nh})$ given by (6)
  in~\cite{ar:cortes_martinez-non_holonomic_integrators} or (3)
  in~\cite{ar:fernandez_tori_zuccalli-lagrangian_reduction_of_discrete_mechanical_systems}.
\end{example}

\begin{remark}
  Under appropriate regularity conditions on the discrete lagrangian
  $L_d$ and dimensional relation on the constraints spaces, the
  existence of trajectories of a DLDPS is guaranteed in a neighborhood
  of a given trajectory.
\end{remark}


\subsection{Nonholonomic discrete mechanical systems with symmetry}
\label{sec:nonholonomic_discrete_mechanical_systems_with_symmetry}

Symmetries of a nonholonomic discrete mechanical system
(Example~\ref{ex:DNHMS_as_DLDPS}) were considered
in~\cite{ar:fernandez_tori_zuccalli-lagrangian_reduction_of_discrete_mechanical_systems}. Even
more, a reduction process was developed there so that a new
discrete-time dynamical system ---called the \jdef{reduced system}---
was constructed starting from a symmetric nonholonomic discrete
mechanical system and whose dynamics captured the essential features
of that of the original system. Unfortunately, that reduced system is
not usually a nonholonomic discrete mechanical system. The goal of
this section is to recall those constructions and results
from~\cite{ar:fernandez_tori_zuccalli-lagrangian_reduction_of_discrete_mechanical_systems}
and prove that, indeed, the reduced system can be interpreted as a
DLDPS whose trajectories in the sense of
Definition~\ref{def:trajectory_DLDPS} are the same as those of the
reduced system (in the sense
of~\cite{ar:fernandez_tori_zuccalli-lagrangian_reduction_of_discrete_mechanical_systems}).

Let $l^Q$ be a left $G$-action on $Q$ such that
$\pi^{Q,G} : Q \rightarrow Q/G$ is a principal $G$-bundle and fix a
discrete connection $\DC$ on this bundle. In this case, the
commutative diagram~\eqref{eq:diagram_ExM_to_reduced} turns into
\begin{equation}
  \xymatrix{ {Q\times Q} \ar[r]^(.4){\ti{\Phi}_{\DC}}_(.4){\sim}
    \ar[d]_{\pi^{Q\times Q,G}} \ar[dr]^(.55){\Upsilon_{\DC}} & 
    {(Q\times G) \times (Q/G)} \ar[d]^{\pi^{Q\times G,G}\times id_{Q/G}} \\
    {(Q\times Q)/G} \ar[r]_(.45){\Phi_{\DC}}^(.45){\sim} &
    {\ti{G} \times (Q/G)}}
\end{equation}
where $\ti{G} := (Q\times G)/G$ with $G$ acting on $Q$ by $l^{Q}$ and
on $G$ by conjugation and
\begin{equation} \label{eq:Upsilon-special_case-def}
  \Upsilon_{\DC}(q_0,q_1):=(\pi^{Q\times
    G,G}(q_0,\DC(q_0,q_1)),\pi^{Q,G}(q_1)).
\end{equation}

A Lie group $G$ is a symmetry group of the discrete nonholonomic
mechanical system $(Q,L_d,\mathcal{D}_d,\mathcal{D}^{nh})$ if
$\pi^{Q,G}:Q \rightarrow Q/G$ is a principal bundle, $L_d$ and
$\mathcal{D}_d$ are invariant by the diagonal action $l^{Q \times Q}$
and $\mathcal{D}^{nh}$ is invariant by the lifted action $l^{TQ}$.  By
the $G$-invariance of $L_d$, there is a well defined map
$\check{L}_d:\ti{G}\times (Q/G)\rightarrow \R$ such that
$\check{L}_d(v_0,r_1)=L_d(q_0,q_1)$ for any $(q_0,q_1)\in Q\times Q$
such that $\Upsilon_{\DC}(q_0,q_1)=(v_0,r_1)$. The actions associated
to $L_d$ and $\check{L}_d$ are
$S_d(q_\cdot) := \sum_k L_d(q_k,q_{k+1})$ and
$\check{S}_d(v_\cdot,r_\cdot):= \sum_k \check{L}_d(v_k,r_{r+1})$. Also
by $G$-invariance, being $\mathcal{D}_d\subset Q\times Q$ a regular
submanifold, $\mathcal{D}_d/G\subset (Q\times Q)/G$ is a regular
submanifold by Proposition~\ref{prop:bundle_image_submanifold}; as
$\Phi_\DC$ is a diffeomorphism,
$\check{\mathcal{D}}_d:=\Phi_\DC(\mathcal{D}_d/G)$ is a regular
submanifold of $\ti{G}\times (Q/G)$.

The next result
of~\cite{ar:fernandez_tori_zuccalli-lagrangian_reduction_of_discrete_mechanical_systems}
relates the variational principle that describes the dynamics of
$(Q,L_d,\mathcal{D}_d,\mathcal{D}^{nh})$ with a variational principle
for its reduced system defined on $\ti{G} \times (Q/G)$.

\begin{theorem}\label{thm:reduction_NH_discrete_mech_system}
  Let $G$ be a symmetry group of the discrete nonholonomic mechanical
  system $(Q,L_d,\mathcal{D}_d,\mathcal{D}^{nh})$. Let $\DC$ be a
  discrete connection on the principal $G$-bundle
  $\pi^{Q,G} : Q \rightarrow Q/G$. Let $q_{\cdot}$ be a discrete path
  in $Q$, $r_k:= \pi^{Q,G} (q_k)$,
  $w_k:= \DC(q_{k},q_{k+1})$ and
  $v_k:= \pi^{Q \times G,G}(q_k,w_k)$ be the corresponding discrete
  paths in $Q/G$, $G$ and $\ti{G}$. Then, the following statements
  are equivalent.
  \begin{enumerate}
  \item \label{it:reduction_NH_discrete_mech_system-not_reduced}
    $(q_{k},q_{k+1}) \in \mathcal{D}_d$ for all $k$ and $q_{\cdot}$
    satisfies the criticality condition
    $dS_d(q_{\cdot})(\delta q_{\cdot})=0$ for all fixed-endpoint
    variations $\delta q_{\cdot}$ such that
    $\delta q_k \in \mathcal{D}^{nh}_{q_k}$ for all $k$.
  \item \label{it:reduction_NH_discrete_mech_system-reduced}
    $(v_{k},r_{k+1}) \in \check{\mathcal{D}}_d$ for all $k$ and
    $d \check{S}_d(v_{\cdot},r_{\cdot})(\delta v_{\cdot},\delta
    r_{\cdot})=0$ for all $(\delta v_{\cdot},\delta r_{\cdot})$ such
    that
    \begin{equation}
      \label{eq:Def_variation}
      (\delta v_{k},\delta r_{k+1}):=
      d \Upsilon_{\DC}(q_{k},q_{k+1})(\delta q_{k}, \delta q_{k+1})
    \end{equation}
    for $k=0,\ldots,N-1$ and where $\delta q_{\cdot}$ is a fixed-endpoint
    variation on $q_{\cdot}$ such that
    $\delta q_{k} \in \mathcal{D}^{nh}_{q_k}$ for all $k$.
  \end{enumerate}
\end{theorem}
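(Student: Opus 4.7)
The plan is to show that both conditions in the theorem are, term by term, expressions of the same numerical quantities once one uses the $G$-invariance of $L_d$ and $\mathcal{D}_d$ together with the identity $\Upsilon_{\DC} = \Phi_{\DC}\circ \pi^{Q\times Q,G}$ from diagram~\eqref{eq:diagram_ExM_to_reduced}. Accordingly, I split the proof into two steps: (a) equivalence of the kinematic constraints, and (b) equivalence of the variational conditions.

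For step (a), since $\mathcal{D}_d$ is $G$-invariant and $\pi^{Q\times Q,G}$ is the quotient projection, $(q_k,q_{k+1})\in\mathcal{D}_d$ iff $\pi^{Q\times Q,G}(q_k,q_{k+1})\in \mathcal{D}_d/G$. Applying the diffeomorphism $\Phi_{\DC}$ (whose existence is given by Proposition~\ref{prop:equivariant_diffeomorphisms} specialized to $E=Q$, $\phi=id_Q$), this is in turn equivalent to
\begin{equation*}
  \Upsilon_{\DC}(q_k,q_{k+1}) = (v_k,r_{k+1}) \in \Phi_{\DC}(\mathcal{D}_d/G) = \check{\mathcal{D}}_d.
\end{equation*}
Here I also use Proposition~\ref{prop:bundle_image_submanifold} applied to $\rho = \pi^{Q\times Q,G}$ and $Z=\mathcal{D}_d$ to guarantee that $\mathcal{D}_d/G$, and hence $\check{\mathcal{D}}_d$, is a regular submanifold, so the condition appearing in~\ref{it:reduction_NH_discrete_mech_system-reduced} is well-posed.

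For step (b), I first record the defining identity $L_d = \check{L}_d\circ \Upsilon_{\DC}$ (valid because $L_d$ is $G$-invariant and $\check{L}_d$ is defined exactly so that this holds), which gives $L_d(q_k,q_{k+1}) = \check{L}_d(v_k,r_{k+1})$ termwise and hence $S_d(q_\cdot) = \check{S}_d(v_\cdot,r_\cdot)$ after summation. Given a fixed-endpoint variation $\delta q_\cdot$ with $\delta q_k\in \mathcal{D}^{nh}_{q_k}$, I differentiate each summand by the chain rule to obtain
\begin{equation*}
  dL_d(q_k,q_{k+1})(\delta q_k,\delta q_{k+1}) = d\check{L}_d(v_k,r_{k+1})\bigl(d\Upsilon_{\DC}(q_k,q_{k+1})(\delta q_k,\delta q_{k+1})\bigr),
\end{equation*}
and by~\eqref{eq:Def_variation} the right-hand side equals $d\check{L}_d(v_k,r_{k+1})(\delta v_k,\delta r_{k+1})$. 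Summing over $k$ yields $dS_d(q_\cdot)(\delta q_\cdot) = d\check{S}_d(v_\cdot,r_\cdot)(\delta v_\cdot,\delta r_\cdot)$.

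Combining (a) and (b), the two criticality conditions are the vanishing of literally the same number ranging over the same set of admissible variations, so statements~\ref{it:reduction_NH_discrete_mech_system-not_reduced} and~\ref{it:reduction_NH_discrete_mech_system-reduced} are equivalent. I do not expect any substantive obstacle; the main care required is bookkeeping, namely verifying that the prescription $\delta r_{k+1} = d\pi^{Q,G}(q_{k+1})(\delta q_{k+1})$ extracted from~\eqref{eq:Def_variation} is consistent across consecutive pairs (it is, because the second component of $\Upsilon_{\DC}$ depends only on the second argument), and noting that the outer quantifier on the reduced side is, by the very wording of the theorem, the image under this chain-rule procedure of the admissible variations upstairs, so no surjectivity argument is required to close the iff.
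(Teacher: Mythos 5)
Your proof is correct, but it is worth noting that the paper does not actually prove this theorem: the remark immediately following it states that it is ``part of Theorem 5.11'' of the earlier work \cite{ar:fernandez_tori_zuccalli-lagrangian_reduction_of_discrete_mechanical_systems}, whose proof uses the additional datum of a (continuous) connection on $\pi^{Q,G}:Q\rightarrow Q/G$ to split the variations $\delta q_\cdot$ into horizontal and vertical parts and treat the two pieces separately. Your argument is a genuinely different and more elementary route: you observe that, as restated here, the reduced variations in point~\ref{it:reduction_NH_discrete_mech_system-reduced} are \emph{defined} as the images under $d\Upsilon_{\DC}$ of the admissible unreduced variations, so the equivalence of the criticality conditions collapses to the chain rule applied to $S_d=\check S_d\circ(\Upsilon_{\DC}\times\cdots\times\Upsilon_{\DC})$, and the equivalence of the kinematic constraints follows from $G$-invariance of $\mathcal{D}_d$ together with the fact that the fibers of $\Upsilon_{\DC}=\Phi_{\DC}\circ\pi^{Q\times Q,G}$ are exactly the $G$-orbits (equivalently, $\mathcal{D}_d=\Upsilon_{\DC}^{-1}(\check{\mathcal{D}}_d)$, as in Lemma~\ref{le:check_D_submanifold}). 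Your closing observation is the key point that makes this work: no surjectivity or intrinsic characterization of the reduced variations is needed at this stage, precisely because the quantifier on the reduced side ranges over images; the intrinsic description is deferred to Proposition~\ref{prop:variations_fixed_endpoints}. What the original connection-based proof buys --- a horizontal/vertical decomposition of the reduced equations --- is not needed for the equivalence as stated, so your shorter argument is adequate and self-contained.
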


\begin{remark}
  Theorem~\ref{thm:reduction_NH_discrete_mech_system} is part of
  Theorem 5.11
  in~\cite{ar:fernandez_tori_zuccalli-lagrangian_reduction_of_discrete_mechanical_systems};
  this last result requires the additional data of a connection on the
  principal bundle $\pi^{Q,G} : Q \rightarrow Q/G$ to decompose the
  variations $\delta q_{\cdot}$ in horizontal and vertical parts. We
  have omitted this requirement and adapted the result accordingly.
\end{remark}

The reduced system associated to
$(Q,L_d,\mathcal{D}_d,\mathcal{D}^{nh})$ in Section 5
of~\cite{ar:fernandez_tori_zuccalli-lagrangian_reduction_of_discrete_mechanical_systems}
is the discrete-time dynamical system on $\ti{G} \times (Q/G)$ whose
trajectories are discrete paths that satisfy the variational principle
of point~\ref{it:reduction_NH_discrete_mech_system-reduced} in
Theorem~\ref{thm:reduction_NH_discrete_mech_system}. Next we construct
a DLDPS that will, eventually, be equivalent to this reduced system.
We define the fiber bundle $\phi:E\rightarrow M$ as
$p^{Q/G}:\ti{G}\rightarrow Q/G$, where
$p^{Q/G}(\pi^{Q \times G,G}(q,w)) := \pi^{Q,G}(q)$. The reduced
lagrangian $\check{L}_d:\ti{G} \times (Q/G)\rightarrow \R$ is a smooth
map on $C'(\ti{G})=\ti{G} \times (Q/G)$. We already saw that
$\mathcal{\check{D}}_{d}$ is a regular submanifold of
$C'(\ti{G})=\ti{G}\times (Q/G)$. The space
$\check{\mathcal{D}}:=d\Upsilon_{\DC}(p_{1}^{*}(\mathcal{D}^{nh}))$
defines a subbundle of $T(C'(\ti{G}))$ (this is a special case of
Lemma~\ref{le:check_D_subbundle} in
Section~\ref{subsec:Reduced_DLP_system}).

In order to define the NIVCM
$\check{\mathcal{P}}\in
\hom(\ti{p_{34}}^*(\mathcal{\check{D}}),\ker(dp^{Q/G}))$ we consider the
application $\Upsilon_{\DC}:Q\times Q\rightarrow \ti{G}\times (Q/G)$
given by~\eqref{eq:Upsilon-special_case-def} and define
\begin{equation}\label{eq:P_def}
  \check{\mathcal{P}}((v_{0},r_{1}),(v_{1},r_{2}))(\ti{\delta v_{1}},0) :=
  D_{2}(p_{1}\circ \Upsilon_{\DC})(q_{0},q_{1})(\delta q_{1})
  \in T_{v_{0}}\ti{G}  
\end{equation}
where $(q_{0},q_{1},q_{2})$ are such that
$(v_{0},r_{1})=\Upsilon_{\DC}(q_{0},q_{1})$ and
$(v_{1},r_{2})=\Upsilon_{\DC}(q_{1},q_{2})$ and
$\delta q_{1}\in \mathcal{D}^{nh}_{q_{1}}$ satisfies that
$\delta v_{1} = D_{1}(p_{1}\circ \Upsilon_{\DC})(q_{0},q_{1})(\delta
q_{1})$. That $\check{\mathcal{P}}$ is well defined follows from
Lemma~\ref{le:prop_chaining_map}.

In this way, we associate a DLDPS
$\mathcal{M} := (E, \check{L}_d, \check{\mathcal{D}}_d,
\check{\mathcal{D}}, \check{\mathcal{P}})$ to the reduced system and
we will prove that the trajectories of both systems coincide.

\begin{lemma}\label{le:prop_chaining_map}
  Let $Q$, $\mathcal{D}$, $\DC$ and $\Upsilon_{\DC}$ be as
  before. Then, the following statements are true.
  \begin{enumerate}
  \item \label{it:prop_chaining_map-iso} For
    $(q_{0},q_{1})\in Q\times Q$,
    $d \Upsilon_{\DC}|_{\mathcal{D}^{nh}_{q_0}\times
      \{0\}}(q_{0},q_{1}):(\mathcal{D}^{nh}_{q_0}\times \{0\})\subset
    T_{(q_{0},q_{1})}(Q \times Q)\rightarrow
    \check{\mathcal{D}}_{\Upsilon_{\DC}(q_{0},q_{1})}\subset
    p_{1}^{*}(T\ti{G})_{\Upsilon_{\DC}(q_{0},q_{1})}$ is an
    isomorphism of vector spaces.
		
  \item \label{it:prop_chaining_map-well_def} For
    $((v_{0},r_{1}),(v_{1},r_{2}))\in C''(E)$ and
    $(\delta v_{1},0)\in \check{\mathcal{D}}$ the map
    $\check{\mathcal{P}}$ given by \eqref{eq:P_def} is well defined
    and it is linear in $\delta v_{1}$.
		
  \item \label{it:prop_chaining_map-ker} For
    $((v_{0},r_{1}),(v_{1},r_{2})) \in C^{\prime \prime}(E)$ and
    $(\delta v_{1},0)\in \check{\mathcal{D}}$ we have
    \begin{equation*}
      dp^{Q/G}(v_{0})(\check{\mathcal{P}}((v_{0},r_{1}),(v_{1},r_{2}))(\delta
      v_{1},0)) =0.
    \end{equation*}
  \end{enumerate}
\end{lemma}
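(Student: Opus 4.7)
The plan is to treat the three parts in order, with part (1) providing the key tool used in parts (2) and (3).

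For part (1), the strategy is to verify surjectivity and injectivity separately, exploiting that $\Upsilon_{\DC}$ is a principal $G$-bundle with respect to the diagonal $G$-action on $Q\times Q$ (Lemma \ref{le:Upsilon_fiber_bundle}) and that $\mathcal{D}^{nh}$ is $G$-invariant. For surjectivity, any element of $\check{\mathcal{D}}_{\Upsilon_{\DC}(q_0,q_1)}$ is, by the definition $\check{\mathcal{D}}=d\Upsilon_{\DC}(p_1^*\mathcal{D}^{nh})$, of the form $d\Upsilon_{\DC}(\ti{q}_0,\ti{q}_1)(\delta\ti{q}_0,0)$ for some $(\ti{q}_0,\ti{q}_1)$ in the fiber of $\Upsilon_{\DC}$ over $\Upsilon_{\DC}(q_0,q_1)$. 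Since that fiber is the diagonal $G$-orbit of $(q_0,q_1)$, we have $(\ti{q}_0,\ti{q}_1)=g\cdot(q_0,q_1)$ for some $g\in G$; $G$-equivariance of $\Upsilon_{\DC}$ (with trivial action on $\ti{G}\times(Q/G)$) together with $G$-invariance of $\mathcal{D}^{nh}$ then show that $(dl^Q_{g^{-1}}(\delta\ti{q}_0),0)\in\mathcal{D}^{nh}_{q_0}\times\{0\}$ has the same image under $d\Upsilon_{\DC}(q_0,q_1)$. For injectivity, if $(\delta q_0,0)$ lies in the kernel of $d\Upsilon_{\DC}(q_0,q_1)$, the principal bundle structure forces it to be tangent to the diagonal $G$-orbit, hence of the form $(\xi_Q(q_0),\xi_Q(q_1))$ for some $\xi\in\jgg$; vanishing of the second component and freeness of $l^Q$ yield $\xi=0$ and thus $\delta q_0=0$.

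For part (2), once $(q_0,q_1,q_2)$ is fixed, uniqueness of $\delta q_1\in\mathcal{D}^{nh}_{q_1}$ realizing a prescribed $\delta v_1$ is the injectivity half of part (1) applied at $(q_1,q_2)$ (the automatic vanishing of the second tangent component is ensured since $p_2\circ\Upsilon_{\DC}=\pi^{Q,G}\circ p_2$ is independent of the first argument). Because $\Upsilon_{\DC}$ is a principal $G$-bundle under the diagonal action, the freedom in choosing compatible triples $(q_0,q_1,q_2)$ is exactly a simultaneous replacement by $(gq_0,gq_1,gq_2)$ for some $g\in G$, under which $\delta q_1$ transforms to $dl^Q_g(\delta q_1)\in\mathcal{D}^{nh}_{gq_1}$. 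Invariance of $D_2(p_1\circ\Upsilon_{\DC})(q_0,q_1)(\delta q_1)$ under this replacement then reduces to the diagonal $G$-invariance of $p_1\circ\Upsilon_{\DC}:Q\times Q\to\ti{G}$, which follows from~\eqref{eq:equivariance_of_connection_form} combined with the conjugation $G$-action in the definition of $\ti{G}=(Q\times G)/G$. Linearity in $\delta v_1$ is then immediate from the linearity of $d\Upsilon_{\DC}$ and the uniqueness just established.

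Part (3) is essentially a one-line computation: by the definition of $p^{Q/G}$, the composition $p^{Q/G}\circ p_1\circ\Upsilon_{\DC}(q_0,q_1)=\pi^{Q,G}(q_0)$ is independent of $q_1$, so $dp^{Q/G}(v_0)\circ D_2(p_1\circ\Upsilon_{\DC})(q_0,q_1)=0$ identically, and in particular when evaluated on $\delta q_1$. I expect the main obstacle to be the bookkeeping in part (2): carefully tracking how the triple $(q_0,q_1,q_2)$, the lifted vector $\delta q_1$, and the partial derivative $D_2(p_1\circ\Upsilon_{\DC})$ all transform under the diagonal $G$-action, and checking that the invariance identity for $p_1\circ\Upsilon_{\DC}$ exactly cancels the transformation of $\delta q_1$ so that the definition of $\check{\mathcal{P}}$ is unambiguous.
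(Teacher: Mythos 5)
Your proof is correct and rests on exactly the same ingredients the paper invokes: the paper disposes of part (1) by citing point (2) of Lemma~\ref{le:prop_Upsilon} (that $d\Upsilon_{\DC}$ restricted to $p_1^*TQ$ is an isomorphism, which comes from the principal-bundle structure of $\Upsilon_{\DC}$ and the freeness of the action, as in your surjectivity/injectivity argument) and parts (2)--(3) by citing Lemma~\ref{le:P_reduced_is_well_defined}, whose content is precisely the diagonal $G$-invariance of $p_1\circ\Upsilon_{\DC}$ via~\eqref{eq:equivariance_of_connection_form} and the identity $p^{Q/G}\circ p_1\circ\Upsilon_{\DC}=\pi^{Q,G}\circ p_1$ that you use. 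You have simply written out in full what the paper delegates to those cited lemmas (and, through them, to the companion paper), so the two arguments coincide in substance.
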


\begin{proof}
  See point~\ref{it:prop_Upsilon-iso} in Lemma~\ref{le:prop_Upsilon}
  for point~\ref{it:prop_chaining_map-iso} and
  Lemma~\ref{le:P_reduced_is_well_defined} for
  points~\ref{it:prop_chaining_map-well_def}
  and~\ref{it:prop_chaining_map-ker}.
\end{proof}

The following result
of~\cite{ar:fernandez_tori_zuccalli-lagrangian_reduction_of_discrete_mechanical_systems_by_stages}
proves that all discrete paths in $C'(E)=C'(\ti{G})$ arise from
discrete paths in $C'(id_Q)$.

\begin{lemma}
  Let $(v_{\cdot},r_{\cdot})$ be a discrete path in $C'(E)$ and
  $q_{0}\in Q$ such that $p^{Q/G}(v_{0})=\pi^{Q,G}(q_{0})$. Then,
  there exists a unique discrete path in $C'(id_Q:Q\rightarrow Q)$
  such that $\Upsilon_{\DC}(q_{k},q_{k+1})=(v_{k},r_{k+1})$ for all
  $k=0,\ldots,N-1$.
\end{lemma}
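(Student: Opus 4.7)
The plan is to construct $q_1,\ldots,q_N$ one at a time by induction on $k$, producing at each step a unique $q_{k+1}$ from $q_k$ and the pair $(v_k,r_{k+1})$, and then verifying that $\pi^{Q,G}(q_{k+1})=p^{Q/G}(v_{k+1})$ so the induction can continue. The base case $k=0$ is exactly the standing hypothesis $\pi^{Q,G}(q_0)=p^{Q/G}(v_0)$, and the compatibility built into the definition of a discrete path in $C'(E)$, namely $((v_k,r_{k+1}),(v_{k+1},r_{k+2}))\in C''(E)$, is precisely the statement $r_{k+1}=p^{Q/G}(v_{k+1})$, which closes the induction loop at the end of each step.

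For the inductive step, suppose $q_k$ is known with $\pi^{Q,G}(q_k)=p^{Q/G}(v_k)$. Because the $G$-action on $Q$ is free, there is a unique $w_k\in G$ with $v_k=\pi^{Q\times G,G}(q_k,w_k)$: any representative $(\tilde q,\tilde w)$ of $v_k$ determines a unique $g\in G$ with $q_k=l^Q_g(\tilde q)$, and one sets $w_k:=g\tilde w g^{-1}$. I then define
\[
q_{k+1}:=l^Q_{w_k}\bigl(\HLds{q_k}(r_{k+1})\bigr).
\]
Since $(q_k,\HLds{q_k}(r_{k+1}))\in Hor_{\DC}$ and $\pi^{Q,G}(\HLds{q_k}(r_{k+1}))=r_{k+1}$ by Definition~\ref{def:discrete_horizontal_lift}, one has $\DC(q_k,\HLds{q_k}(r_{k+1}))=e$ and hence, by the equivariance identity~\eqref{eq:equivariance_of_connection_form} applied with $g_0=e$ and $g_1=w_k$, $\DC(q_k,q_{k+1})=w_k$. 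Combined with $\pi^{Q,G}(q_{k+1})=r_{k+1}$ (from the $G$-invariance of $\pi^{Q,G}$), the explicit formula~\eqref{eq:Upsilon-special_case-def} then yields $\Upsilon_{\DC}(q_k,q_{k+1})=(v_k,r_{k+1})$ as required.

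Uniqueness is immediate from the same ingredients: if $q'_{k+1}$ also satisfies $\Upsilon_{\DC}(q_k,q'_{k+1})=(v_k,r_{k+1})$, then $\pi^{Q,G}(q'_{k+1})=r_{k+1}=\pi^{Q,G}(q_{k+1})$ forces $q'_{k+1}=l^Q_h(q_{k+1})$ for a unique $h\in G$, and~\eqref{eq:equivariance_of_connection_form} gives $\DC(q_k,q'_{k+1})=h\,\DC(q_k,q_{k+1})=h w_k$. Matching the first component of $\Upsilon_{\DC}$ forces $\pi^{Q\times G,G}(q_k,h w_k)=\pi^{Q\times G,G}(q_k,w_k)$, and freeness of the $G$-action on $Q$ then yields $h=e$. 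The only real subtlety is keeping straight the several $G$-actions in play (the diagonal one on $Q\times Q$, the conjugation action defining $\ti{G}$, and the action on $Q\times G$) and invoking~\eqref{eq:equivariance_of_connection_form} with the correct group elements; beyond this bookkeeping the argument requires no deeper input, and conceptually it amounts to the observation that fixing $q_k$ turns $\Upsilon_{\DC}(q_k,\cdot)$ into the inverse of the trivialization of $\pi^{Q,G}$ afforded by $\DC$.
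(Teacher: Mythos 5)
Your proof is correct and follows essentially the route the paper relies on: the paper states this lemma without proof (deferring to its companion paper), but the surrounding machinery --- the trivialization $\ti{\Psi}_{\DC}$ of Proposition~\ref{prop:equivariant_diffeomorphisms} and the lifting statements in point~\ref{it:prop_Upsilon-lifting} of Lemma~\ref{le:prop_Upsilon} and Proposition~\ref{prop:path_relation} --- encodes exactly the inductive step you carry out explicitly, namely $q_{k+1}=l^Q_{w_k}(\HLds{q_k}(r_{k+1}))$ with $w_k$ the unique group element representing $v_k$ over $q_k$. Your verifications of the existence and uniqueness of $w_k$ (via freeness), of $\DC(q_k,q_{k+1})=w_k$ (via~\eqref{eq:equivariance_of_connection_form}), and of the closing of the induction through the compatibility condition $r_{k+1}=p^{Q/G}(v_{k+1})$ built into $C''(E)$ are all sound.
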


The following result compares the nonholonomic infinitesimal
variations with fixed endpoints on the discrete path
$(v_{\cdot},r_{\cdot})$ in $C'(E)=C'(\ti{G})$ for the system
$\mathcal{M}$, with the variations defined in
point~\ref{it:reduction_NH_discrete_mech_system-reduced} of
Theorem~\ref{thm:reduction_NH_discrete_mech_system} on the same
discrete path.

\begin{proposition}\label{prop:variations_fixed_endpoints}
  Let $(v_{\cdot},r_{\cdot})$ be a be discrete path in $C'(E)$ such
  that $(v_{k},r_{k+1})=\Upsilon_{\DC}(q_{k},q_{k+1})$ for all $k$,
  where $q_\cdot$ is a discrete path in $C'(id_Q)$. Then, the
  following statements are true.
	
  \begin{enumerate}
  \item \label{it::variations_fixed_endpoints-NH_variations} Given a
    fixed-endpoint variation $\delta q_{\cdot}$ in $\mathcal{D}^{nh}$
    on $q_{\cdot}$, the infinitesimal variation
    $(\delta v_{\cdot},\delta r_{\cdot})$ defined in
    point~\ref{it:reduction_NH_discrete_mech_system-reduced} of
    Theorem~\ref{thm:reduction_NH_discrete_mech_system} by
    \eqref{eq:Def_variation} is a nonholonomic infinitesimal variation
    on $(v_{\cdot},r_{\cdot})$ with fixed endpoints (in the sense of
    Definition~\ref{def:ininitesimal_variation}) for $\mathcal{M}$.
		
  \item \label{it::variations_fixed_endpoints-old_variations} Given a
    nonholonomic infinitesimal variation
    $(\delta v_\cdot, \delta r_\cdot)$ on $(v_{\cdot},r_{\cdot})$ with
    fixed endpoints (for $\mathcal{M}$), there exists a fixed-endpoint
    variation $\delta q_{\cdot}$ in $\mathcal{D}^{nh}$ on $q_{\cdot}$
    such that~\eqref{eq:Def_variation} is satisfied for all $k$.
  \end{enumerate}
\end{proposition}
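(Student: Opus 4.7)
The plan is to prove both directions by a direct verification using the chain rule together with Lemma~\ref{le:prop_chaining_map}. Writing $\Upsilon_\DC(q_0,q_1)=(\pi^{Q\times G,G}(q_0,\DC(q_0,q_1)),\pi^{Q,G}(q_1))$, the decomposition $d\Upsilon_\DC(q_k,q_{k+1})(\delta q_k,\delta q_{k+1}) = (D_1(p_1\circ\Upsilon_\DC)(q_k,q_{k+1})(\delta q_k)+D_2(p_1\circ\Upsilon_\DC)(q_k,q_{k+1})(\delta q_{k+1}),\, d\pi^{Q,G}(q_{k+1})(\delta q_{k+1}))$ is immediate. The identity $p^{Q/G}\circ p_1\circ\Upsilon_\DC=\pi^{Q,G}\circ p_1$ gives, upon differentiation, $dp^{Q/G}(v_k)\circ D_1(p_1\circ\Upsilon_\DC)(q_k,q_{k+1})=d\pi^{Q,G}(q_k)\circ dp_1$; this identity is the bridge between the condition $\delta r_k = dp^{Q/G}(v_k)(\delta v_k)$ required by~\eqref{eq:infinitesimal_varation_1} on the reduced side and the relation $\delta r_k = d\pi^{Q,G}(q_k)(\delta q_k)$ coming from the lift $q_\cdot$.

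For part~\ref{it::variations_fixed_endpoints-NH_variations}, given a fixed-endpoint variation $\delta q_\cdot$ with $\delta q_k\in\mathcal{D}^{nh}_{q_k}$, I would define $\ti{\delta v_k} := D_1(p_1\circ\Upsilon_\DC)(q_k,q_{k+1})(\delta q_k)$ for $k=1,\ldots,N-1$. Lemma~\ref{le:prop_chaining_map}.\ref{it:prop_chaining_map-iso} then gives $(\ti{\delta v_k},0)\in\check{\mathcal{D}}_{(v_k,r_{k+1})}$, while the definition~\eqref{eq:P_def} of $\check{\mathcal{P}}$ identifies $D_2(p_1\circ\Upsilon_\DC)(q_k,q_{k+1})(\delta q_{k+1})$ with $\check{\mathcal{P}}((v_k,r_{k+1}),(v_{k+1},r_{k+2}))(\ti{\delta v_{k+1}},0)$. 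Substituted into the chain-rule decomposition above, this yields the bulk equation of~\eqref{eq:infinitesimal_varation_2}, and the endpoint conditions $\delta q_0 = \delta q_N = 0$ reduce the same decomposition to the first and last equations of~\eqref{eq:infinitesimal_varation_2} and also force $\delta r_N = 0$.

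For part~\ref{it::variations_fixed_endpoints-old_variations} I would reverse this construction. Given $(\delta v_\cdot,\delta r_\cdot)$ satisfying Definition~\ref{def:ininitesimal_variation}, the associated vectors $\ti{\delta v_k}$ lie in $\check{\mathcal{D}}_{(v_k,r_{k+1})}$, so Lemma~\ref{le:prop_chaining_map}.\ref{it:prop_chaining_map-iso} yields a unique $\delta q_k \in \mathcal{D}^{nh}_{q_k}$ with $D_1(p_1\circ\Upsilon_\DC)(q_k,q_{k+1})(\delta q_k)=\ti{\delta v_k}$ for $k=1,\ldots,N-1$; I would then set $\delta q_0 := 0$ and $\delta q_N := 0$. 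Verifying~\eqref{eq:Def_variation} amounts to checking the cases $k=0$, $1\le k\le N-2$, and $k=N-1$ separately: the first component of $d\Upsilon_\DC(q_k,q_{k+1})(\delta q_k,\delta q_{k+1})$ reproduces $\delta v_k$ using~\eqref{eq:infinitesimal_varation_2} and the definition of $\check{\mathcal{P}}$, while the second component equals $d\pi^{Q,G}(q_{k+1})(\delta q_{k+1})$, which the bridge identity above together with Lemma~\ref{le:prop_chaining_map}.\ref{it:prop_chaining_map-ker} (killing the $\check{\mathcal{P}}$-contribution under $dp^{Q/G}$) matches with $dp^{Q/G}(v_{k+1})(\delta v_{k+1})=\delta r_{k+1}$.

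The main obstacle is not conceptual but rather the careful endpoint bookkeeping, since the three branches of~\eqref{eq:infinitesimal_varation_2} must be correctly aligned with the prescriptions $\delta q_0 = \delta q_N = 0$. All of the non-trivial geometric content is already packaged in Lemma~\ref{le:prop_chaining_map}, which supplies both the fibrewise isomorphism permitting free passage between $\delta q_k\in\mathcal{D}^{nh}_{q_k}$ and $(\ti{\delta v_k},0)\in\check{\mathcal{D}}$, and the verticality of $\check{\mathcal{P}}$ ensuring compatibility under the projection $p^{Q/G}$.
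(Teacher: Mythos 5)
Your proposal is correct and follows essentially the same route as the paper's proof: the same definition $\ti{\delta v_k} := D_1(p_1\circ\Upsilon_{\DC})(q_k,q_{k+1})(\delta q_k)$ (equivalently $d\Upsilon_{\DC}(q_k,q_{k+1})(\delta q_k,0)$), the same use of the fibrewise isomorphism from Lemma~\ref{le:prop_chaining_map} in both directions, the identification of the $D_2$-term with $\check{\mathcal{P}}$ via~\eqref{eq:P_def}, and the bridge identity $p^{Q/G}\circ p_1\circ\Upsilon_{\DC}=\pi^{Q,G}\circ p_1$ for the compatibility condition~\eqref{eq:infinitesimal_varation_1}. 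The only cosmetic difference is that you invoke the verticality of $\check{\mathcal{P}}$ (point~\ref{it:prop_chaining_map-ker}) to kill its contribution under $dp^{Q/G}$, where the paper obtains the same cancellation by direct substitution.
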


\begin{proof}
  \begin{enumerate}
  \item Let $\delta q_{\cdot}$ be a fixed-endpoint variation on
    $q_{\cdot}$ in $Q$ such that
    $\delta q_k \in \mathcal{D}^{nh}_{q_k}$ and let
    $(\delta v_{\cdot},\delta r_{\cdot})$ be the variation defined by
    \eqref{eq:Def_variation} in terms of $\delta q_{\cdot}$. Let
    $(\ti{\delta v_{k}},0):=d\Upsilon_{\DC}(q_{k},q_{k+1})(\delta
    q_{k},0) \in d\Upsilon_{\DC}(p_{1}^{*}(\mathcal{D}^{nh}_{q_k})) =
    \check{\mathcal{D}}_{(v_{k},r_{k+1})}$ for $k=0,\ldots,N-1$.
		
    We want to see that $(\delta v_{\cdot},\delta r_{\cdot})$ is a
    nonholonomic infinitesimal variation on $(v_{\cdot},r_{\cdot})$
    with fixed endpoints. Recall that
    \begin{equation*}
      \begin{split}
        \Upsilon_{\DC}(q_{k},q_{k+1}) =& (\pi^{Q\times
          G,G}(q_{k},\DC(q_{k},q_{k+1})),\pi ^{Q,G}(q_{k+1})) \\=&
        ((p_{1}\circ
        \Upsilon_{\DC})(q_{k},q_{k+1}),\pi^{Q,G}(q_{k+1})),
      \end{split}
    \end{equation*}
    and given that $\delta q_{\cdot}$ is a fixed-endpoint variation,
    we notice that
    \begin{equation*}
      \begin{split}
        (\delta v_{N-1},\delta r_N) =& d
        \Upsilon_{\DC}(q_{N-1},q_{N})(\delta q_{N-1},\delta q_{N})
        \\=& d\Upsilon_{\DC}(q_{N-1},q_{N})(\delta q_{N-1},0) =
        (\ti{\delta v_{N-1}},0),
      \end{split}
    \end{equation*}
    and
    \begin{equation*}
      \begin{split}
        \delta v_{0} =& d(p_{1}\circ \Upsilon_{\DC})(q_{0},
        q_{1})(\delta q_{0}, \delta q_{1}) = d(p_{1}\circ
        \Upsilon_{\DC})(q_{0}, q_{1})(0, \delta q_{1}) \\=&
        D_{2}(p_{1}\circ \Upsilon_{\DC})(q_{0},q_{1})(\delta q_{1})
        =\check{\mathcal{P}}((v_{0},r_{1}),(v_{1},r_{2}))(\ti{\delta
          v_{1}},0).
      \end{split}
    \end{equation*}
    
    Also, taking into account that
    $\ti{\delta v_{k+1}} = D_{1}(p_{1}\circ
    \Upsilon_{\DC})(q_{k},q_{k+1})(\delta q_{k+1})$, for $k=1,\ldots,N-2$
    we have that
    \begin{equation*}
      \begin{split}
        (\delta v_{k},\delta r_{k+1}) =& d\Upsilon_{\DC}(q_{k},
        q_{k+1})(\delta q_{k},\delta q_{k+1}) \\=&
        d\Upsilon_{\DC}(q_{k}, q_{k+1})(\delta q_{k},0) +
        d\Upsilon_{\DC}(q_{k},q_{k+1})(0,\delta q_{k+1}) \\=&
        (\ti{\delta v_{k}},0) +
        D_{2}\Upsilon_{\DC}(q_{k},q_{k+1})(\delta q_{k+1})\\=&
        (\ti{\delta v_{k}},0) + (D_{2}(p_1\circ
        \Upsilon_{\DC})(q_{k},q_{k+1})(\delta q_{k+1}),\\&
        \phantom{(\ti{\delta v_{k}},0) + (} D_{2}(p_2\circ
        \Upsilon_{\DC})(q_{k},q_{k+1})(\delta q_{k+1})) \\=&
        (\ti{\delta v_{k}},0) +
        (\check{\mathcal{P}}((v_{k},r_{k+1}),(v_{k+1},r_{k+2}))(\ti{\delta
          v_{k+1}},0), \delta r_{k+1}).
      \end{split}
    \end{equation*}
    Thus, $(\delta v_\cdot,\delta r_\cdot)$ satisfies
    conditions~\eqref{eq:infinitesimal_varation_2}. By construcion of
    the discrete path $(v_{\cdot},r_{\cdot})$, $r_{k}=p^{Q/G}(v_{k})$
    for all $k$ and since
    $p^{Q/G}\circ p_1\circ \Upsilon_{\DC} = \pi^{Q,G}\circ p_1$, then
    \begin{equation}
      \label{eq:delta_r_k}
      \begin{split}
        dp^{Q/G}_{(p_{1}\circ\Upsilon_{\DC})(q_{k},q_{k+1})}
        (d(p_{1}\circ \Upsilon_{\DC})_{(q_{k},q_{k+1})}(\delta
        q_{k},\delta q_{k+1})) =& d\pi^{Q,G}(q_k)(\delta q_k) \\
        dp^{Q/G}(v_k)(\delta v_k) =& \delta r_k
      \end{split}
    \end{equation}
    so that $\delta r_{k}=dp^{Q/G}(v_{k})(\delta v_{k})$, where
    $\delta v_{k}$ is given by the condition
    \eqref{eq:Def_variation}. Hence $(\delta v_\cdot,\delta r_\cdot)$
    satisfies condition~\eqref{eq:infinitesimal_varation_1}, hence
    part~\ref{it::variations_fixed_endpoints-NH_variations} is true.
		
  \item We consider $(\delta v_{\cdot},\delta r_{\cdot})$ that
    satisfies \eqref{eq:infinitesimal_varation_1} and
    \eqref{eq:infinitesimal_varation_2} for some vectors
    $(\ti{\delta v_{k}},0)\in \check{\mathcal{D}}_{(v_k,r_{k+1})}$
    with $k=1,\ldots,N-1$.  Let
    $\delta q_{0} :=0 \in \mathcal{D}^{nh}_{q_{0}}$ and
    $\delta q_{N} := 0\in \mathcal{D}^{nh}_{q_{N}}$ and, for each
    $k=1,\ldots,N-1$, using point~\ref{it:prop_chaining_map-iso} of
    Lemma~\ref{le:prop_chaining_map}, let
    $\delta q_{k}\in \mathcal{D}^{nh}_{q_{k}}$ such that
    $d\Upsilon_{\DC}(q_{k},q_{k+1})(\delta q_k,0) = (\ti{\delta
      v_k},0)$.

    We have that $r_k=\phi(v_k)=p^{Q/G}(v_k)$ for all $k$. Also, using
    \eqref{eq:delta_r_k}, we have
    \begin{equation*}
      \begin{split}
        \delta r_{k} =& d \phi(v_k)(\delta v_k)=d p^{Q/G}(v_k) (\delta
        v_k)\\=& d p^{Q/G}(v_k) (\ti{\delta v_{k}} +
        \check{\mathcal{P}}((v_{k},r_{k+1})
        ,(v_{k+1},r_{k+2}))(\ti{\delta v_{k+1}},0)) \\=& d
        p^{Q/G}(v_k)(D_{1}(p_{1}\circ
        \Upsilon_{\DC})(q_{k},q_{k+1})(\delta q_{k}) +
        D_{2}(p_{1}\circ \Upsilon_{\DC})(q_{k},q_{k+1})(\delta
        q_{k+1})) \\=& d p^{Q/G}(v_k)(d(p_{1}\circ
        \Upsilon_{\DC})(q_{k},q_{k+1})(\delta q_{k},\delta q_{k+1}))
        \\=& d (p^{Q/G} \circ p_{1} \circ
        \Upsilon_{\DC})(q_{k},q_{k+1})(\delta q_{k},\delta q_{k+1}) =
        d \pi^{Q,G}(q_{k})(\delta q_{k}),
      \end{split}
    \end{equation*}
    and
    \begin{equation*}
      \begin{split}
        \delta v_{k} =& \ti{\delta v_{k}} +
        \check{\mathcal{P}}((v_{k},r_{k+1})
        ,(v_{k+1},r_{k+2}))(\ti{\delta v_{k+1}},0) \\=& d(p_{1}\circ
        \Upsilon_{\DC})(q_{k},q_{k+1})(\delta q_k,0) +
        D_{2}(p_{1}\circ \Upsilon_{\DC})(q_{k},q_{k+1})(\delta
        q_{k+1}) \\=& d(p_{1}\circ
        \Upsilon_{\DC})(q_{k},q_{k+1})(\delta q_k,0) + d(p_{1}\circ
        \Upsilon_{\DC})(q_{k},q_{k+1})(0,\delta q_{k+1}) \\=& d(
        p_{1}\circ \Upsilon_{\DC})(q_{k},q_{k+1})(\delta q_{k},\delta
        q_{k+1}).
      \end{split}
    \end{equation*}
    Finally, putting all together
    \begin{equation*}
      \begin{split}
        (\delta v_{k},\delta r_{k+1}) =& (d(p_{1}\circ
        \Upsilon_{\DC})(q_{k},q_{k+1})(\delta q_{k},\delta
        q_{k+1}),d\pi ^{Q,G}(q_{k+1})(\delta q_{k+1})) \\=& d\Upsilon
        _{\DC}(q_{k},q_{k+1})(\delta q_{k},\delta q_{k+1}),
      \end{split}
    \end{equation*}
    and we have verified that~\eqref{eq:Def_variation} is satisfied
    for all $k$. Hence,
    part~\ref{it::variations_fixed_endpoints-old_variations} is true.
  \end{enumerate}
\end{proof}

\begin{corollary}
  A discrete path $(v_{\cdot},r_{\cdot})$ is a trajectory of
  $\mathcal{M}$ if and only if it is a trajectory of the reduced
  system according to
  part~\ref{it:reduction_NH_discrete_mech_system-reduced} of
  Theorem~\ref{thm:reduction_NH_discrete_mech_system}.
\end{corollary}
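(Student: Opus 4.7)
The plan is to chain together the pieces already established: the preceding Lemma lifts discrete paths in $C'(E)$ to discrete paths in $Q$, Theorem~\ref{thm:reduction_NH_discrete_mech_system} relates the unreduced variational principle on $Q$ to the ``mixed'' variational principle on $\ti{G}\times(Q/G)$, and Proposition~\ref{prop:variations_fixed_endpoints} identifies the two families of variations that appear in each principle. So the job is essentially to verify that no data is lost when passing through this chain, and that the kinematic constraints correspond correctly.

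First, I would start with a discrete path $(v_\cdot,r_\cdot)$ in $C'(E)=\ti{G}\times(Q/G)$ and pick any $q_0\in Q$ with $\pi^{Q,G}(q_0)=p^{Q/G}(v_0)$. The Lemma immediately preceding Proposition~\ref{prop:variations_fixed_endpoints} produces a (unique) discrete path $q_\cdot$ in $Q$ satisfying $\Upsilon_{\DC}(q_k,q_{k+1})=(v_k,r_{k+1})$ for all $k$. Next I would verify the kinematic constraint condition: since $\check{\mathcal{D}}_d=\Phi_{\DC}(\mathcal{D}_d/G)$ and $\Upsilon_{\DC}=\Phi_{\DC}\circ\pi^{Q\times Q,G}$, we have $(v_k,r_{k+1})\in\check{\mathcal{D}}_d$ for every $k$ if and only if $(q_k,q_{k+1})\in\mathcal{D}_d$ for every $k$ (using $G$-invariance of $\mathcal{D}_d$ to shift $q_\cdot$ by the $G$-action if needed).

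Second, I would match the criticality conditions. The key identity $\check{L}_d(v_k,r_{k+1})=L_d(q_k,q_{k+1})$ yields $\check{S}_d(v_\cdot,r_\cdot)=S_d(q_\cdot)$, so for any fixed-endpoint variation $\delta q_\cdot$ in $\mathcal{D}^{nh}$ on $q_\cdot$ and the associated $(\delta v_\cdot,\delta r_\cdot)$ defined by~\eqref{eq:Def_variation},
\begin{equation*}
  dS_d(q_\cdot)(\delta q_\cdot)=d\check{S}_d(v_\cdot,r_\cdot)(\delta v_\cdot,\delta r_\cdot).
\end{equation*}
By Theorem~\ref{thm:reduction_NH_discrete_mech_system}, $q_\cdot$ satisfies the unreduced variational principle iff $(v_\cdot,r_\cdot)$ satisfies the variational principle of part~\ref{it:reduction_NH_discrete_mech_system-reduced}.

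Third, I would use Proposition~\ref{prop:variations_fixed_endpoints} to replace the ``mixed'' variations of Theorem~\ref{thm:reduction_NH_discrete_mech_system}\ref{it:reduction_NH_discrete_mech_system-reduced} with the nonholonomic infinitesimal variations with fixed endpoints from Definition~\ref{def:ininitesimal_variation} for $\mathcal{M}$. Point~\ref{it::variations_fixed_endpoints-NH_variations} of that proposition says every mixed variation is a nonholonomic infinitesimal variation for $\mathcal{M}$, and point~\ref{it::variations_fixed_endpoints-old_variations} provides the converse: any nonholonomic infinitesimal variation of $\mathcal{M}$ with fixed endpoints arises from some $\delta q_\cdot\in\mathcal{D}^{nh}$ with fixed endpoints via~\eqref{eq:Def_variation}. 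So the set of variations against which criticality is tested coincides in both formulations, and together with $\check{S}_d=S_d$, the criticality condition of part~\ref{it:reduction_NH_discrete_mech_system-reduced} is exactly the trajectory condition for $\mathcal{M}$ from Definition~\ref{def:trajectory_DLDPS}.

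The only subtle step is the second half of the translation in Proposition~\ref{prop:variations_fixed_endpoints}\ref{it::variations_fixed_endpoints-old_variations}, where one must produce a genuine $\delta q_\cdot$ in $\mathcal{D}^{nh}$ realizing a given abstract nonholonomic infinitesimal variation; this rests on the isomorphism in Lemma~\ref{le:prop_chaining_map}\ref{it:prop_chaining_map-iso}, which is already available. Since each implication of the corollary is now just a chain of ``iff''s among items already proved, assembling them completes the proof.
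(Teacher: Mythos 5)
Your proof is correct and rests on exactly the same key point as the paper's one-line argument: both trajectory notions impose the same kinematic constraint and test criticality of the same action $\check{S}_d$, so the equivalence reduces to the identification of the two classes of test variations provided by Proposition~\ref{prop:variations_fixed_endpoints}. The detour through Theorem~\ref{thm:reduction_NH_discrete_mech_system} and the lift to $Q$ is harmless but not needed, since the corollary compares two variational principles on the reduced space directly.
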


\begin{proof}
  The equivalence between the two descriptions of a trajectory follows
  immediately by the correspondence of the infinitesimal variations
  established in Proposition~\ref{prop:variations_fixed_endpoints}.
\end{proof}

      
\section{Categorical formulation}
\label{sec:categorical_formulation}

\begin{definition}\label{def:category}
  We define the \textit{category of discrete
    Lagrange--D'Alembert--Poincar\'e systems} $\DLDPSC$ as the category
  whose objects are DLDPSs. Given
  $\mathcal{M},\mathcal{M}' \in \Ob_{\DLDPSC}$ with
  $\mathcal{M}=(E,L_d,\mathcal{D}_d,\mathcal{D},\mathcal{P})$ and
  $\mathcal{M}'=(E',L'_d,\mathcal{D}'_d,\mathcal{D}',\mathcal{P}')$ a
  map $\Upsilon : C'(E)\rightarrow C'(E')$ is a morphism in
  $\Mor_{\DLDPSC} (\mathcal{M},\mathcal{M}')$ if
  \begin{enumerate}
  \item \label{it:prop_morphism_1} $\Upsilon$ is a surjective submersion,
  \item \label{it:prop_morphism_2} $D_1(p_2 \circ \Upsilon) =0$,
  \item \label{it:prop_morphism_3} As maps from $C''(E)$ in $M'$
    \begin{equation}\label{eq:prop_morphism_3}
      p_{2}^{C'(E'),M'} \circ \Upsilon \circ p_{1}^{C''(E),C'(E)} = \phi' \circ p_{1}^{C'(E'),E'} \circ \Upsilon \circ p_{2}^{C''(E),C'(E)}
    \end{equation}
    where $p_{j}^{A,B} : A\rightarrow B$ are the maps induced by the
    canonical projections of the Cartesian product onto its factors,
  \item \label{it:prop_morphism_4} $L_d=L'_d \circ \Upsilon$,
  \item \label{it:prop_morphism_5} $\mathcal{D}'_d = \Upsilon(\mathcal{D}_d)$,
  \item \label{it:prop_morphism_6} $\mathcal{D}' = d\Upsilon(\mathcal{D})$,
  \item \label{it:prop_morphism_7} For all
    $(((\epsilon_0,m_1),(\epsilon_1,m_2)),(\delta \epsilon_1,0)) \in
    \ti{p_{34}}^*(\mathcal{D})$,
    \begin{equation}
      \label{eq:prop_7}
      \begin{split}
        \mathcal{P}'(\Upsilon^{(2)}&((\epsilon_0,m_1),
        (\epsilon_1,m_2)))(d\Upsilon(\epsilon_1,m_2)(\delta
        \epsilon_1,0))\\ =& d(p_1 \circ
        \Upsilon)(\epsilon_0,m_1)(\mathcal{P}((\epsilon_0,m_1),
        (\epsilon_1,m_2))(\delta
        \epsilon_1,0),d\phi(\epsilon_1)(\delta \epsilon_1))
      \end{split}
    \end{equation}
    where $\Upsilon \times \Upsilon$ defines a map
    $\Upsilon^{(2)}:C''(E)\rightarrow C''(E')$.
  \end{enumerate}
\end{definition}

We recall the statement of Lemma 4.3
of~\cite{ar:fernandez_tori_zuccalli-lagrangian_reduction_of_discrete_mechanical_systems_by_stages}.

\begin{lemma}\label{le:prop_upsilon}	
  Let
  $\Upsilon \in \Mor_{\DLDPSC}(\mathcal{M},\mathcal{M}'),
  ((\epsilon_0,m_1),(\epsilon_1,m_2))\in C''(E)$ and also
  $(\epsilon'_0,m'_1):=\Upsilon(\epsilon_0,m_1)$. Then, if
  $\delta \epsilon_1 \in T_{\epsilon_1} E$,
  \begin{equation*}
    D_2(p_2 \circ \Upsilon)(\epsilon_0,m_1)(d\phi(\epsilon_1)
    (\delta \epsilon_1)) = d\phi'(\epsilon'_1)(D_1(p_1 \circ \Upsilon)
    (\epsilon_1,m_2)(\delta\epsilon_1)).
  \end{equation*}
\end{lemma}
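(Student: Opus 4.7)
The identity is a direct consequence of differentiating the compatibility condition~\eqref{eq:prop_morphism_3} in a suitable direction, so my strategy is to set up the right curve in $C''(E)$ and then apply the chain rule on both sides. I first restate condition~\eqref{eq:prop_morphism_3} pointwise: for every $((\epsilon_0,m_1),(\epsilon_1,m_2))\in C''(E)$,
\begin{equation*}
  p_2(\Upsilon(\epsilon_0,m_1)) \;=\; \phi'\bigl(p_1(\Upsilon(\epsilon_1,m_2))\bigr).
\end{equation*}
Both sides are smooth functions of $((\epsilon_0,m_1),(\epsilon_1,m_2))\in C''(E)$ with values in $M'$, so they have equal total derivatives.

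Next I construct a one-parameter family in $C''(E)$ tailored to isolate the tangent vector $\delta\epsilon_1\in T_{\epsilon_1}E$. Choose a smooth curve $\epsilon_1(t)$ in $E$ with $\epsilon_1(0)=\epsilon_1$ and $\dot\epsilon_1(0)=\delta\epsilon_1$, set $m_1(t):=\phi(\epsilon_1(t))$, and keep $\epsilon_0$ and $m_2$ fixed (recall $C'(E)=E\times M$, so $m_2$ is free). Because the defining condition of $C''(E)$ is exactly $m_1=\phi(\epsilon_1)$, the curve $t\mapsto ((\epsilon_0,m_1(t)),(\epsilon_1(t),m_2))$ lies in $C''(E)$. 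Its tangent at $t=0$ is the vector $((0,\,d\phi(\epsilon_1)(\delta\epsilon_1)),(\delta\epsilon_1,\,0))$, which satisfies the compatibility $\delta m_1 = d\phi(\epsilon_1)(\delta\epsilon_1)$ required on $T C''(E)$.

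Finally, I differentiate the pointwise identity along this curve at $t=0$. The left hand side only sees the second slot of $(\epsilon_0,m_1(t))$ move, producing
\begin{equation*}
  \tfrac{d}{dt}\big|_{0}\, p_2(\Upsilon(\epsilon_0,m_1(t))) \;=\; D_2(p_2\circ\Upsilon)(\epsilon_0,m_1)\bigl(d\phi(\epsilon_1)(\delta\epsilon_1)\bigr),
\end{equation*}
while the right hand side, by the chain rule and the fact that only the first slot of $(\epsilon_1(t),m_2)$ moves, gives
\begin{equation*}
  d\phi'(\epsilon'_1)\bigl(D_1(p_1\circ\Upsilon)(\epsilon_1,m_2)(\delta\epsilon_1)\bigr),
\end{equation*}
with $\epsilon'_1:=p_1(\Upsilon(\epsilon_1,m_2))$. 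Equating the two derivatives yields the claimed identity. There is no real obstacle here; the only subtlety is checking that the constructed curve genuinely lies in $C''(E)$, which is immediate from the choice $m_1(t)=\phi(\epsilon_1(t))$, and that the partial-derivative notations $D_1$ and $D_2$ are being applied with respect to the correct factor in each term.
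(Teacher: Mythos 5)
Your argument is correct: the identity is exactly the derivative of the morphism condition~\eqref{eq:prop_morphism_3}, evaluated on the tangent vector $((0,d\phi(\epsilon_1)(\delta\epsilon_1)),(\delta\epsilon_1,0))\in T C''(E)$, and your curve $t\mapsto((\epsilon_0,\phi(\epsilon_1(t))),(\epsilon_1(t),m_2))$ does lie in $C''(E)$ since the fiber-product condition is precisely $m_1=\phi(\epsilon_1)$. The present paper only recalls this lemma from its companion reference without reproducing the proof, but your route is the natural one and matches what that proof must do; no gaps.
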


\begin{proposition}\label{prop:LPD_category}
  $\DLDPSC$ is a category with the standard composition of
  functions and identity mappings.
\end{proposition}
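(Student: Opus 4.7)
The plan is to verify the three things needed to call $\DLDPSC$ a category: (a) for every object $\mathcal{M}$, the identity function $id_{C'(E)}$ lies in $\Mor_{\DLDPSC}(\mathcal{M},\mathcal{M})$; (b) the composition of two morphisms is a morphism; (c) associativity of composition and the two identity laws. Since composition of functions is associative and $id$ is a two-sided identity for function composition, (c) is automatic once (a) and (b) are established.

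For (a), each of the seven conditions in Definition~\ref{def:category} is immediate for $id_{C'(E)}$: it is a surjective submersion; $p_2\circ id = p_2$ so $D_1(p_2\circ id)=0$; condition~\ref{it:prop_morphism_3} reduces to $p_2^{C'(E),M}\circ p_1^{C''(E),C'(E)} = \phi\circ p_1^{C'(E),E}\circ p_2^{C''(E),C'(E)}$, which is exactly the fiber-product condition defining $C''(E)$; and conditions~\ref{it:prop_morphism_4}--\ref{it:prop_morphism_7} are tautologies, with condition~\ref{it:prop_morphism_7} reading $\mathcal{P}(\cdot)(\delta\epsilon_1,0) = \mathcal{P}(\cdot)(\delta\epsilon_1,0)$ after noting $d(p_1\circ id)(\epsilon_0,m_1)$ restricted to $(v,d\phi(\epsilon_1)(v))$ returns $v$ when $v$ lies in $\mathcal{D}$.

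For (b), fix $\Upsilon\in\Mor_{\DLDPSC}(\mathcal{M},\mathcal{M}')$ and $\Upsilon'\in\Mor_{\DLDPSC}(\mathcal{M}',\mathcal{M}'')$, and set $\Psi:=\Upsilon'\circ\Upsilon$. Conditions~\ref{it:prop_morphism_1}, \ref{it:prop_morphism_4}, \ref{it:prop_morphism_5} and~\ref{it:prop_morphism_6} are formal: compositions of surjective submersions are surjective submersions; $L_d=L'_d\circ\Upsilon=L''_d\circ\Upsilon'\circ\Upsilon$; $\Psi(\mathcal{D}_d)=\Upsilon'(\Upsilon(\mathcal{D}_d))=\Upsilon'(\mathcal{D}'_d)=\mathcal{D}''_d$; and similarly $d\Psi(\mathcal{D})=\mathcal{D}''$. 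For condition~\ref{it:prop_morphism_2}, the chain rule gives $D_1(p_2\circ\Psi)(\epsilon_0,m_1) = D_1(p_2\circ\Upsilon')(\Upsilon(\epsilon_0,m_1))\circ D_1(p_1\circ\Upsilon)(\epsilon_0,m_1) + D_2(p_2\circ\Upsilon')(\Upsilon(\epsilon_0,m_1))\circ D_1(p_2\circ\Upsilon)(\epsilon_0,m_1)$; the first summand vanishes by condition~\ref{it:prop_morphism_2} applied to $\Upsilon'$, and the second vanishes by condition~\ref{it:prop_morphism_2} applied to $\Upsilon$. Condition~\ref{it:prop_morphism_3} for $\Psi$ is a routine (if notation-heavy) diagram chase: apply~\eqref{eq:prop_morphism_3} for $\Upsilon$ and then for $\Upsilon'$ to both sides, using that $\phi'=p_2\circ\Upsilon\circ(\mathrm{diagonal-like})$ in the sense of Lemma~\ref{le:prop_upsilon}.

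The main obstacle is condition~\ref{it:prop_morphism_7} for the composition. Let $(\delta\epsilon_1,0)\in\mathcal{D}_{(\epsilon_1,m_2)}$ and abbreviate $(\epsilon'_i,m'_{i+1}):=\Upsilon(\epsilon_i,m_{i+1})$, $\delta\epsilon'_1:=D_1(p_1\circ\Upsilon)(\epsilon_1,m_2)(\delta\epsilon_1)$, so that $d\Upsilon(\epsilon_1,m_2)(\delta\epsilon_1,0)=(\delta\epsilon'_1,0)\in\mathcal{D}'_{(\epsilon'_1,m'_2)}$ by condition~\ref{it:prop_morphism_2} for $\Upsilon$. The plan is to apply~\eqref{eq:prop_7} for $\Upsilon'$ to the pair $((\epsilon'_0,m'_1),(\epsilon'_1,m'_2))$ and the variation $(\delta\epsilon'_1,0)$, expressing $\mathcal{P}''$ in terms of $\mathcal{P}'$, and then apply~\eqref{eq:prop_7} for $\Upsilon$ to expand $\mathcal{P}'((\epsilon'_0,m'_1),(\epsilon'_1,m'_2))(\delta\epsilon'_1,0)$ in terms of $\mathcal{P}$. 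Combining both applications by the chain rule and the identity $d(p_1\circ\Psi)=d(p_1\circ\Upsilon')\circ d\Upsilon$ should yield the required formula. The delicate point is that the second slot $d\phi'(\epsilon'_1)(\delta\epsilon'_1)$ appearing on the right of~\eqref{eq:prop_7} for $\Upsilon'$ must be matched with $D_2(p_2\circ\Upsilon)(\epsilon_0,m_1)(d\phi(\epsilon_1)(\delta\epsilon_1))$; this matching is exactly the content of Lemma~\ref{le:prop_upsilon} applied to $\Upsilon$, and it is the key algebraic step that makes the two applications fit together. Once this substitution is carried out, the remaining manipulation is a bookkeeping exercise in the chain rule.
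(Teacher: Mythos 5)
Your proof is correct and follows the same standard verification that the paper implicitly invokes (its own proof is a one-line reference to the analogous Proposition 4.4 of the prior paper on discrete Lagrange--Poincar\'e systems, extended by the formal checks for conditions~\ref{it:prop_morphism_5} and~\ref{it:prop_morphism_6}). One small simplification: condition~\ref{it:prop_morphism_3} for the composite does not need Lemma~\ref{le:prop_upsilon} at all --- it merely restates that $\Upsilon^{(2)}$ maps $C''(E)$ into $C''(E')$, so applying condition~\ref{it:prop_morphism_3} for $\Upsilon$ and then for $\Upsilon'$ suffices, while your use of Lemma~\ref{le:prop_upsilon} (together with $D_1(p_2\circ\Upsilon)=0$) in the condition~\ref{it:prop_morphism_7} computation is exactly the right key step.
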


\begin{proof}
  This proof is analogous to the proof of Proposistion 4.4
  of~\cite{ar:fernandez_tori_zuccalli-lagrangian_reduction_of_discrete_mechanical_systems_by_stages}.
\end{proof}

\begin{remark}
  Any discrete Lagrange--Poincar\'e system~\cite[Definition
  3.4]{ar:fernandez_tori_zuccalli-lagrangian_reduction_of_discrete_mechanical_systems_by_stages}
  can be seen as a Lagrange--D'Alembert--Poincar\'e system ``without
  constraints'', that is, with $\mathcal{D}_d:=C'(E)$ and
  $\mathcal{D}:=p_1^*TE$. Also, with this interpretation any morphism
  of Lagrange--Poincar\'e systems is a morphism of
  Lagrange--D'Alembert--Poincar\'e systems. It is immediate to check
  that the category of Lagrange--Poincar\'e systems~\cite[Definition
  4.1]{ar:fernandez_tori_zuccalli-lagrangian_reduction_of_discrete_mechanical_systems_by_stages}
  is a full subcategory of $\DLDPSC$.
\end{remark}

\begin{lemma}\label{le:morphism}
  Let $\Upsilon'\in \Mor_{\DLDPSC}(\mathcal{M},\mathcal{M}')$ and
  $\Upsilon''\in \Mor_{\DLDPSC}(\mathcal{M},\mathcal{M}'')$ where
  $\mathcal{M} = (E,L_{d},\mathcal{D}_{d},\mathcal{D},\mathcal{P})$,
  $\mathcal{M}' =
  (E',L'_{d},\mathcal{D}'_{d},\mathcal{D}',\mathcal{P}')$ and
  $\mathcal{M}'' =
  (E'',L''_{d},\mathcal{D}''_{d},\mathcal{D}'',\mathcal{P}'')$. If
  $F:C'(E') \rightarrow C'(E'')$ is a smooth map such that the diagram
  \begin{equation*}
    \xymatrix{ {} & {C'(E)} \ar[dl]_{\Upsilon'} \ar[dr]^{\Upsilon''} & {}\\
      {C'(E')} \ar[rr]_{F} & {} & {C'(E'')}}
  \end{equation*}
  is commutative, then
  $F\in
  \Mor_{\DLDPSC}(\mathcal{M}',\mathcal{M}'')$. Also,
  if $F$ is a diffeomorphism, then $F$ is an isomorphism in
  $\DLDPSC$.
\end{lemma}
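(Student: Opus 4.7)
The plan is to verify each of the seven defining conditions of Definition~\ref{def:category} for $F$, exploiting the identity $F\circ \Upsilon' = \Upsilon''$ and the fact that $\Upsilon'$ is a surjective submersion (which lets one cancel $\Upsilon'$ on the right in pointwise or set-theoretic identities and lift points and tangent vectors from $C'(E')$ to $C'(E)$). For the isomorphism claim, once the morphism property of $F$ is established, the symmetric commutative triangle $F^{-1}\circ \Upsilon'' = \Upsilon'$ lets me re-apply the morphism portion of the lemma with the roles of $\Upsilon'$ and $\Upsilon''$ swapped, so that $F^{-1}\in \Mor_{\DLDPSC}(\mathcal{M}'',\mathcal{M}')$.

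The conditions split into groups. The routine ones are Points~\ref{it:prop_morphism_1}, \ref{it:prop_morphism_4}, \ref{it:prop_morphism_5}, and~\ref{it:prop_morphism_6}. Surjectivity of $F$ follows from that of $\Upsilon'' = F\circ \Upsilon'$; the submersion property follows from the chain rule $dF\circ d\Upsilon' = d\Upsilon''$ together with pointwise surjectivity of $d\Upsilon'$. For Point~\ref{it:prop_morphism_4}, $L''_d\circ F\circ \Upsilon' = L''_d\circ \Upsilon'' = L_d = L'_d\circ \Upsilon'$, and right-cancellation by the surjection $\Upsilon'$ yields $L''_d\circ F = L'_d$. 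Points~\ref{it:prop_morphism_5} and~\ref{it:prop_morphism_6} reduce to the set-theoretic identities $F(\mathcal{D}'_d) = F(\Upsilon'(\mathcal{D}_d)) = \Upsilon''(\mathcal{D}_d) = \mathcal{D}''_d$ and $dF(\mathcal{D}') = dF(d\Upsilon'(\mathcal{D})) = d\Upsilon''(\mathcal{D}) = \mathcal{D}''$.

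Points~\ref{it:prop_morphism_2} and~\ref{it:prop_morphism_3}, which encode compatibility with the base projection onto $M$ and with the chaining structure, are handled by reducing the required identity for $F$ to the known identity for $\Upsilon''$ through surjectivity of $\Upsilon'$ and the chain rule. For Point~\ref{it:prop_morphism_2}, I would differentiate $F\circ \Upsilon' = \Upsilon''$ at $(\epsilon,m)\in C'(E)$ along a vertical direction $(\delta \epsilon,0)$ and use Point~\ref{it:prop_morphism_2} for both $\Upsilon'$ and $\Upsilon''$ together with Lemma~\ref{le:prop_upsilon} applied to $\Upsilon'$ to identify the image vectors of $d\Upsilon'$ restricted to vertical directions and conclude $D_1(p_2\circ F) = 0$. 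For Point~\ref{it:prop_morphism_3}, I would evaluate~\eqref{eq:prop_morphism_3} (with $\Upsilon$ replaced by $F$) on pairs coming from the image of ${\Upsilon'}^{(2)}: C''(E)\rightarrow C''(E')$, which is surjective by Point~\ref{it:prop_morphism_3} for $\Upsilon'$ and surjectivity of $\Upsilon'$; then rewrite $F\circ \Upsilon'$ as $\Upsilon''$ and apply Point~\ref{it:prop_morphism_3} for $\Upsilon''$.

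The main obstacle is Point~\ref{it:prop_morphism_7}, the NIVCM compatibility, whose statement~\eqref{eq:prop_7} is lengthy. Given $(((e'_0,m'_1),(e'_1,m'_2)),(\delta e'_1,0))\in \ti{p_{34}}^*(\mathcal{D}')$, I would first choose a lift $(((\epsilon_0,m_1),(\epsilon_1,m_2)),(\delta \epsilon_1,0))\in \ti{p_{34}}^*(\mathcal{D})$ satisfying $\Upsilon'(\epsilon_i,m_{i+1}) = (e'_i,m'_{i+1})$ and $d\Upsilon'(\epsilon_1,m_2)(\delta \epsilon_1,0) = (\delta e'_1,0)$; such a lift exists by surjectivity of ${\Upsilon'}^{(2)}$ and Point~\ref{it:prop_morphism_6} for $\Upsilon'$. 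Then I apply Point~\ref{it:prop_morphism_7} to both $\Upsilon'$ and $\Upsilon''=F\circ \Upsilon'$ at this lifted data, equate the two resulting expressions using the chain rule $dF\circ d\Upsilon' = d\Upsilon''$, and isolate the required identity~\eqref{eq:prop_7} for $F$. Since both sides of that identity depend only on the target data $(\delta e'_1,0)$ and not on the choice of lift, no further consistency check is needed, but carefully matching all the algebraic terms across the three instances of Point~\ref{it:prop_morphism_7} is the delicate computational part of the proof.
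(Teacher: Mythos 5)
Your proposal is correct and follows essentially the same route as the paper: the paper writes out only points~\ref{it:prop_morphism_5} and~\ref{it:prop_morphism_6} (via exactly the set-theoretic identities $F(\mathcal{D}'_d)=F(\Upsilon'(\mathcal{D}_d))=\Upsilon''(\mathcal{D}_d)=\mathcal{D}''_d$ and $dF(\mathcal{D}')=d\Upsilon''(\mathcal{D})=\mathcal{D}''$ you give) and delegates points~\ref{it:prop_morphism_1}--\ref{it:prop_morphism_4}, \ref{it:prop_morphism_7} and the isomorphism claim to Lemma 4.5 of the earlier unconstrained paper, whose argument is the same cancel-$\Upsilon'$-by-surjectivity-and-chain-rule scheme you describe.
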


\begin{proof}
  The proof that $F$ satisfies the points \ref{it:prop_morphism_1} to
  \ref{it:prop_morphism_4} and \ref{it:prop_morphism_7} in Definition
  \ref{def:category} and the last assertion of the statement is the
  same as in the proof of Lemma 4.5
  in~\cite{ar:fernandez_tori_zuccalli-lagrangian_reduction_of_discrete_mechanical_systems_by_stages}.
  We want to prove that $F$ satisfies points \ref{it:prop_morphism_5}
  and \ref{it:prop_morphism_6} of Definition \ref{def:category}. Since
  $\Upsilon'\in \Mor_{\DLDPSC}(\mathcal{M},\mathcal{M}')$ and
  $\Upsilon''\in \Mor_{\DLDPSC}(\mathcal{M},\mathcal{M}'')$ and the
  previous diagram is commutative, we have that
  \begin{equation*}
    F(\mathcal{D}_{d}') = F(\Upsilon'(\mathcal{D}_{d})) =
    (F\circ \Upsilon')(\mathcal{D}_{d}) = \Upsilon''(\mathcal{D}_{d}) =
    \mathcal{D}''_{d},
  \end{equation*}
  and, then, point~\ref{it:prop_morphism_5} in
  Definition~\ref{def:category} is satisfied. Similarly, as $\Upsilon'$ and
  $\Upsilon''$ are morphisms in $\DLDPSC$, using the commutativity of
  the diagram, we have
  \begin{equation*}
    \mathcal{D}'' = d\Upsilon''(\mathcal{D}) = d(F\circ \Upsilon')(\mathcal{D})
    = dF(d\Upsilon'(\mathcal{D})) = dF(\mathcal{D}').
  \end{equation*}
  This proves that point~\ref{it:prop_morphism_6} in
  Definition~\ref{def:category} is satisfied.
\end{proof}

\begin{theorem}\label{thm:imp_traject_categ}
  Given $\Upsilon\in \Mor_{\DLDPSC}(\mathcal{M},\mathcal{M}')$ with
  $\mathcal{M}=(E,L_{d},\mathcal{D}_{d},\mathcal{D},\mathcal{P})$ and
  $\mathcal{M}'=(E',L'_{d},\mathcal{D}'_{d},\mathcal{D}',\mathcal{P}')$,
  let
  $(\epsilon_\cdot,m_\cdot) =
  ((\epsilon_0,m_1),\ldots,(\epsilon_{N-1},m_N))$ be a discrete path
  in $C'(E)$ and define
  $(\epsilon'_k,m'_{k+1}) := \Upsilon(\epsilon_k,m_{k+1})$ for
  $k=0,\ldots, N-1$.
  \begin{enumerate}
  \item \label{it:imp_traject_categ-M_to_M'} If
    $(\epsilon_\cdot, m_\cdot)$ is a trajectory of $\mathcal{M}$, then
    $(\epsilon'_\cdot, m'_\cdot)$ is a trajectory of $\mathcal{M}'$.
  \item \label{it:imp_traject_categ-M'_to_M} If
    $\mathcal{D}_d=\Upsilon^{-1}(\mathcal{D}_d')$ and
    $(\epsilon'_\cdot, m'_\cdot)$ is a trajectory of $\mathcal{M}'$,
    then $(\epsilon_\cdot, m_\cdot)$ is a trajectory of $\mathcal{M}$.
  \end{enumerate}
\end{theorem}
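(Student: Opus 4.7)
The plan is to work from the variational characterization of trajectories in Definition~\ref{def:trajectory_DLDPS} rather than directly from the equations of motion in Proposition~\ref{prop:dynamics_DLDPS}. The kinematic constraint conditions in both parts are immediate: for part~\ref{it:imp_traject_categ-M_to_M'}, property~\ref{it:prop_morphism_5} of Definition~\ref{def:category} yields $(\epsilon'_k, m'_{k+1}) = \Upsilon(\epsilon_k,m_{k+1}) \in \Upsilon(\mathcal{D}_d) = \mathcal{D}'_d$; for part~\ref{it:imp_traject_categ-M'_to_M}, the added hypothesis $\mathcal{D}_d = \Upsilon^{-1}(\mathcal{D}'_d)$ lifts the membership $(\epsilon'_k,m'_{k+1}) \in \mathcal{D}'_d$ back to $(\epsilon_k,m_{k+1}) \in \mathcal{D}_d$.

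The central technical step is a compatibility claim for nonholonomic variations. Namely, I would show: if $(\delta\epsilon_\cdot, \delta m_\cdot)$ is the nonholonomic infinitesimal variation on $(\epsilon_\cdot, m_\cdot)$ with fixed endpoints produced by~\eqref{eq:infinitesimal_varation_2} from free parameters $(\ti{\delta\epsilon_k}, 0) \in \mathcal{D}_{(\epsilon_k, m_{k+1})}$, and if we set $(\ti{\delta\epsilon'_k}, 0) := d\Upsilon(\epsilon_k, m_{k+1})(\ti{\delta\epsilon_k}, 0)$, which lies in $\mathcal{D}'_{(\epsilon'_k, m'_{k+1})}$ by properties~\ref{it:prop_morphism_2} and~\ref{it:prop_morphism_6}, then the nonholonomic infinitesimal variation $(\delta\epsilon'_\cdot, \delta m'_\cdot)$ built from these on $(\epsilon'_\cdot, m'_\cdot)$ satisfies $d\Upsilon(\epsilon_k, m_{k+1})(\delta\epsilon_k, \delta m_{k+1}) = (\delta\epsilon'_k, \delta m'_{k+1})$ for every $k$. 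I would verify this by induction from the terminal index downward: the terminal vector $(\ti{\delta\epsilon_{N-1}}, 0)$ is handled directly by property~\ref{it:prop_morphism_2}; the chaining term $\mathcal{P}(\cdots)(\ti{\delta\epsilon_{k+1}}, 0)$ appearing in~\eqref{eq:infinitesimal_varation_2} is transported to $\mathcal{P}'(\cdots)(\ti{\delta\epsilon'_{k+1}}, 0)$ using property~\ref{it:prop_morphism_7}, which was tailored for exactly this purpose; and the $\delta m$ components are handled by Lemma~\ref{le:prop_upsilon} together with the fact that $\mathcal{P}$ (resp. $\mathcal{P}'$) takes values in $\ker(d\phi)$ (resp. $\ker(d\phi')$).

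Once the claim is established, both parts follow by a standard variational manipulation. Property~\ref{it:prop_morphism_4} gives $L_d = L'_d \circ \Upsilon$, so applying $\Upsilon$ componentwise to discrete paths gives $S_d = S'_d \circ \Upsilon^N$, whence $dS_d(\epsilon_\cdot, m_\cdot)(\delta\epsilon_\cdot, \delta m_\cdot) = dS'_d(\epsilon'_\cdot, m'_\cdot)(\delta\epsilon'_\cdot, \delta m'_\cdot)$. For part~\ref{it:imp_traject_categ-M_to_M'}, every nonholonomic infinitesimal variation on $(\epsilon'_\cdot, m'_\cdot)$ with fixed endpoints arises from a lift of its free parameters, since $\mathcal{D}' = d\Upsilon(\mathcal{D})$ by property~\ref{it:prop_morphism_6}; the trajectory hypothesis on $(\epsilon_\cdot, m_\cdot)$ makes the upstairs variational derivative vanish, forcing the downstairs one to vanish as well. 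For part~\ref{it:imp_traject_categ-M'_to_M}, I instead push an arbitrary upstairs choice of $(\ti{\delta\epsilon_k}, 0)$ down to $(\ti{\delta\epsilon'_k}, 0) \in \mathcal{D}'$ via $d\Upsilon$, and the trajectory hypothesis on $(\epsilon'_\cdot, m'_\cdot)$ forces the upstairs variational derivative to vanish for every such choice.

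The main obstacle is the bookkeeping in the compatibility claim: the various morphism axioms each act on one component of a vector $(\delta\epsilon, \delta m) \in T(E \times M)$ at a time (property~\ref{it:prop_morphism_2} on the $(\cdot, 0)$ part, property~\ref{it:prop_morphism_7} ingesting a $(\delta\epsilon, 0)$ but producing an output whose second slot mixes through $d\phi(\epsilon)(\delta\epsilon)$), so one must decompose $(\delta\epsilon_k, \delta m_{k+1})$ as a sum of $(\ti{\delta\epsilon_k}, 0)$ plus $(\mathcal{P}(\cdots)(\ti{\delta\epsilon_{k+1}}, 0),\, d\phi(\epsilon_{k+1})(\ti{\delta\epsilon_{k+1}}))$ (using that the $\mathcal{P}$-term lies in $\ker(d\phi)$ to simplify $\delta m_{k+1} = d\phi(\epsilon_{k+1})(\delta\epsilon_{k+1})$ to $d\phi(\epsilon_{k+1})(\ti{\delta\epsilon_{k+1}})$), apply $d\Upsilon$ to each summand separately, and recombine using Lemma~\ref{le:prop_upsilon} and property~\ref{it:prop_morphism_7}.
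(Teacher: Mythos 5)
Your proposal is correct and follows essentially the same route as the paper's proof: establish that nonholonomic variations built from $d\Upsilon$-related free parameters $(\ti{\delta\epsilon_k},0)$ and $(\ti{\delta\epsilon'_k},0)$ correspond under $d\Upsilon$, use $L_d = L'_d\circ\Upsilon$ to equate the action differentials, and invoke property~\ref{it:prop_morphism_6} to lift (resp.\ push down) the free parameters in each direction. The paper compresses your compatibility claim into the phrase ``direct computations using the morphism properties of $\Upsilon$,'' so your explicit downward induction via properties~\ref{it:prop_morphism_2} and~\ref{it:prop_morphism_7} and Lemma~\ref{le:prop_upsilon} is just a more detailed rendering of the same argument.
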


\begin{proof}
  By hypothesis, $(\epsilon_\cdot,m_\cdot)$ is a discrete path in
  $C'(E)$. It follows from its definition, the fact that $\Upsilon$ is
  a morphism and~\eqref{eq:prop_morphism_3} that
  $(\epsilon'_\cdot,m'_\cdot)$ is a discrete path in $C'(E)$.

  Assume that $(\delta \epsilon_\cdot,\delta m_\cdot)$ is an
  infinitesimal variation in $\mathcal{M}$ over
  $(\epsilon_\cdot,m_\cdot)$ and that
  $(\delta \epsilon'_\cdot,\delta m'_\cdot)$ is an infinitesimal
  variation in $\mathcal{M}'$ over $(\epsilon'_\cdot,m'_\cdot)$
  satisfying
  \begin{equation}\label{eq:image_of_variation_under_morphism}
    d\Upsilon(\epsilon_k,m_{k+1})(\delta \epsilon_k,\delta m_{k+1}) = 
    (\delta \epsilon'_k,\delta m'_{k+1}) \stext{ for } k=0,\ldots,N-1.
  \end{equation}
  Then, using the chain rule and that $L_d=L_d'\circ \Upsilon$, we see
  that
  \begin{equation}\label{eq:comparison_actions_Upsilon_related}
    dS_d(\epsilon_\cdot,m_\cdot)(\delta \epsilon_\cdot,\delta
    m_\cdot) = dS_d'(\epsilon'_\cdot,m'_\cdot)(\delta
    \epsilon'_\cdot, \delta m'_\cdot).
  \end{equation}

  In order to prove point~\ref{it:imp_traject_categ-M_to_M'}, we
  assume that $(\epsilon_\cdot,m_\cdot)$ is a trajectory of
  $\mathcal{M}$. Then $(\epsilon_k,m_{k+1}) \in \mathcal{D}_d$ for
  $k=0,\ldots,N-1$. Then, 
  \begin{equation}\label{eq:image_of_trajectory_in_kinematic_constraint}
    (\epsilon'_k,m'_{k+1}) = \Upsilon(\epsilon_k,m_{k+1}) \in
    \Upsilon(\mathcal{D}_d) = \mathcal{D}'_d \stext{ for } k=0,\ldots,N-1.
  \end{equation}

  Let $(\delta \epsilon'_\cdot, \delta m'_\cdot)$ be an infinitesimal
  variation with fixed endpoints in $\mathcal{M}'$ over the discrete
  path $(\epsilon'_\cdot,m'_\cdot)$. That is, there are
  $(\ti{\delta \epsilon'_k},0)\in
  \mathcal{D}'_{(\epsilon'_k,m'_{k+1})}$ for $k=1,\ldots,N-1$ such
  that~\eqref{eq:infinitesimal_varation_1}
  and~\eqref{eq:infinitesimal_varation_2} hold with
  $\delta \epsilon_k'$ and $\ti{\delta \epsilon_k'}$ instead of
  $\delta \epsilon_k$ and $\ti{\delta \epsilon_k}$.

  By morphism's property~\ref{it:prop_morphism_6} applied to
  $\Upsilon$, there exist
  $(\ti{\delta \epsilon_k},0)\in \mathcal{D}_{(\epsilon_k,m_{k+1})}$
  such that
  $d\Upsilon(\epsilon_k,m_{k+1})(\ti{\delta \epsilon_k},0) =
  (\ti{\delta \epsilon'_k},0)$ for $k=1,\ldots,N-1$; we fix one such
  vector for each $k$. Next apply~\eqref{eq:infinitesimal_varation_1}
  and~\eqref{eq:infinitesimal_varation_2} to define an infinitesimal
  variation $(\delta \epsilon_\cdot, \delta m_\cdot)$ on
  $(\epsilon_\cdot, m_\cdot)$ with fixed endpoints based on the
  $\ti{\delta \epsilon_\cdot}$ constructed above.

  Direct computations using the morphism properties of $\Upsilon$ show
  that condition~\eqref{eq:image_of_variation_under_morphism} holds
  for the $(\delta \epsilon_\cdot,\delta m_\cdot)$ and
  $(\delta \epsilon'_\cdot, \delta m'_\cdot)$ variations. Then,
  using~\eqref{eq:comparison_actions_Upsilon_related},
  \begin{equation*}
    dS_d'(\epsilon'_\cdot,m'_\cdot)(\delta \epsilon'_\cdot, \delta
    m'_\cdot) = dS_d(\epsilon_\cdot,m_\cdot)(\delta \epsilon_\cdot,\delta
    m_\cdot) = 0,
  \end{equation*}
  where the last equality holds because
  $(\delta \epsilon_\cdot,\delta m_\cdot)$ is an infinitesimal
  variation with fixed endpoints in $\mathcal{M}$ over
  $(\epsilon_\cdot,m_\cdot)$, that is a trajectory of
  $\mathcal{M}$. Finally, as
  $(\delta \epsilon'_\cdot, \delta m'_\cdot)$ was an arbitrary
  infinitesimal variation with fixed endpoints in $\mathcal{M}'$ over
  the path $(\epsilon'_\cdot,m'_\cdot)$, and we
  have~\eqref{eq:image_of_trajectory_in_kinematic_constraint}, we
  conclude that $(\epsilon'_\cdot,m'_\cdot)$ is a trajectory of
  $\mathcal{M}'$. This proves
  point~\ref{it:imp_traject_categ-M_to_M'}.

  In order to prove point~\ref{it:imp_traject_categ-M'_to_M}, assume
  that $(\epsilon'_\cdot, m'_\cdot)$ is a trajectory of
  $\mathcal{M}'$. Then, as
  $\mathcal{D}_d=\Upsilon^{-1}(\mathcal{D}'_d)$ and
  $(\epsilon'_k,m'_{k+1})\in \mathcal{D}_d'$ for $k=0,\ldots,N-1$, we
  have that
  $\Upsilon(\epsilon_k,m_{k+1}) = (\epsilon'_k,m'_{k+1}) \in
  \mathcal{D}_d'$, so that
  $(\epsilon_k,m_{k+1})\in
  \Upsilon^{-1}(\mathcal{D}_d')=\mathcal{D}_d$ for $k=0,\ldots,N-1$.

  An argument similar to the one used in the proof of
  point~\ref{it:imp_traject_categ-M_to_M'} shows that
  $(\epsilon_\cdot, m_\cdot)$ satisfies the criticality condition in
  $\mathcal{M}$, so that it is a trajectory of $\mathcal{M}$, thus
  proving point~\ref{it:imp_traject_categ-M'_to_M}.
\end{proof}

The following result, whose proof is immediate, is useful when working
with concrete DLDPSs.

\begin{lemma}\label{le:DLDPS_induced_by_isomorphism_of_FB}
  Let $\phi:E\rightarrow M$ and $\phi':E'\rightarrow M'$ be two fiber
  bundles and $(F,f)$ be a fiber bundle isomorphism from $E$ to
  $E'$. For any
  ${\mathcal M} = (E,L_d,{\mathcal D}_d,{\mathcal D},\mathcal{P}) \in
  \Ob_{\DLDPSC}$, let $L_d' := L_d\circ (F\times f)^{-1}$,
  ${\mathcal D}_d' := (F\times f)({\mathcal D}_d)$,
  ${\mathcal D}' := d(F\times f)({\mathcal D})$ and ${\mathcal P}'$ so
  that, for all
  $(((\epsilon_0',m_1'),(\epsilon_1',m_2')),(\delta \epsilon_1',0))\in
  \ti{p_{34}}^*({\mathcal D}')$, we have
  \begin{equation*}
    \begin{split}
      &{\mathcal P}'((\epsilon_0',m_1'),(\epsilon_1',m_2'))(\delta
      \epsilon_1',0) \\&= dF(F^{-1}(\epsilon_0'))(({\mathcal
        P((F^{-1}(\epsilon_0'),f^{-1}(m_1')),(F^{-1}(\epsilon_1'),f^{-1}(m_2')))
        (dF^{-1}(\epsilon_1')(\delta \epsilon_1'),0)}).
    \end{split}
  \end{equation*}
  Then,
  ${\mathcal M}' := (E',L_d',{\mathcal D}_d',{\mathcal D}',{\mathcal
    P'}) \in \Ob_{\DLDPSC}$ and
  $F\times f\in \hom_{\DLDPSC}({\mathcal M},{\mathcal M}')$ is an
  isomorphism in $\DLDPSC$. In particular, the corresponding sections
  $\nu_d$ and $\nu_d'$ defined by~\eqref{eq:section_of_motion-def}
  satisfy $(\widetilde{F\times f}^{(2)})^*(\nu_d') = \nu_d$ or, explicitly,
  \begin{gather*}
    \nu_d((\epsilon_0,m_1),(\epsilon_1,m_2))(\delta \epsilon_1,0) =
    \nu_d'((F(\epsilon_0),f(m_1)),(F(\epsilon_1),f(m_2)))(dF(\epsilon_1)(\delta
    \epsilon_1),0).
  \end{gather*}
\end{lemma}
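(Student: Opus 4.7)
The plan is to verify the statement in three stages: first that ${\mathcal M}'$ is a well-defined object of $\DLDPSC$, second that $F\times f$ is a morphism (indeed an isomorphism) in $\DLDPSC$, and finally that the relation between the sections $\nu_d$ and $\nu_d'$ holds. Throughout, I will use that since $(F,f)$ is a fiber bundle isomorphism, $F$ and $f$ are diffeomorphisms with $\phi'\circ F = f\circ \phi$; consequently $F\times f:C'(E)\to C'(E')$ is a diffeomorphism such that $p_1\circ (F\times f)=F\circ p_1$ and $p_2\circ (F\times f)=f\circ p_2$.

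For the first stage, $L_d'$ is smooth as a composition of smooth maps, $\mathcal{D}_d'=(F\times f)(\mathcal{D}_d)$ is a regular submanifold of $C'(E')$ because $F\times f$ is a diffeomorphism, and $\mathcal{D}'=d(F\times f)(\mathcal{D})$ is a subbundle of $p_1^*TE'$ because $d(F\times f)$ is a vector bundle isomorphism whose restriction to $p_1^*TE$ (vectors of the form $(v,0)$) lands in $p_1^*TE'$, by $d(F\times f)(v,0)=(dF(v),0)$. To see that $\mathcal{P}'$ is a NIVCM on $(E',\mathcal{D}')$ one checks linearity on each fiber (inherited from $\mathcal{P}$), the vector-bundle map property over $\ti{p}_1$ (the prefactor $dF(F^{-1}(\epsilon_0'))$ depends only on the first factor), and that its image lies in $\ker(d\phi')$: this uses $\phi'\circ F=f\circ\phi$, which gives $d\phi'(\epsilon_0')\circ dF(F^{-1}(\epsilon_0'))=df(\phi(F^{-1}(\epsilon_0')))\circ d\phi(F^{-1}(\epsilon_0'))$, and the image of $\mathcal{P}$ lies in $\ker(d\phi)$.

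For the second stage I check the seven properties of Definition~\ref{def:category} for $\Upsilon:=F\times f$. Property~\ref{it:prop_morphism_1} is trivial since $\Upsilon$ is a diffeomorphism. Property~\ref{it:prop_morphism_2} holds because $p_2\circ\Upsilon=f\circ p_2$ is independent of the first factor. Property~\ref{it:prop_morphism_3} is a direct consequence of $\phi'\circ F=f\circ\phi$ and the identities for $p_j^{A,B}$. Property~\ref{it:prop_morphism_4} is the definition $L_d'=L_d\circ\Upsilon^{-1}$, and Properties~\ref{it:prop_morphism_5} and~\ref{it:prop_morphism_6} are the definitions of $\mathcal{D}_d'$ and $\mathcal{D}'$. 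For Property~\ref{it:prop_morphism_7}, since $p_1\circ\Upsilon=F\circ p_1$, the right-hand side of~\eqref{eq:prop_7} is $dF(\epsilon_0)\bigl(\mathcal{P}((\epsilon_0,m_1),(\epsilon_1,m_2))(\delta\epsilon_1,0)\bigr)$, while the left-hand side, after setting $\epsilon_0'=F(\epsilon_0)$, etc., and $\delta\epsilon_1'=dF(\epsilon_1)(\delta\epsilon_1)$, is exactly this expression by the very definition of $\mathcal{P}'$. Since $\Upsilon$ is a diffeomorphism, one can run the same argument with $(F^{-1},f^{-1})$ to obtain a two-sided inverse in $\Mor_{\DLDPSC}$, so $\Upsilon$ is an isomorphism.

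For the third stage I expand both sides of the claimed identity using~\eqref{eq:section_of_motion-def}. The first term of $\nu_d$ satisfies
\begin{equation*}
D_1L_d(\epsilon_1,m_2)(dp_1(\epsilon_1,m_2)(\delta\epsilon_1,0))=D_1L_d(\epsilon_1,m_2)(\delta\epsilon_1)=D_1L_d'(F(\epsilon_1),f(m_2))(dF(\epsilon_1)(\delta\epsilon_1))
\end{equation*}
by the chain rule applied to $L_d=L_d'\circ\Upsilon$. The second term transforms analogously, using also $\phi'\circ F=f\circ\phi$ so that the $M'$-valued derivatives correspond. The third term requires Property~\ref{it:prop_morphism_7} just verified: applying $D_1L_d=D_1L_d'\circ dF$ to $\mathcal{P}((\epsilon_0,m_1),(\epsilon_1,m_2))(\delta\epsilon_1,0)$ produces $D_1L_d'(F(\epsilon_0),f(m_1))$ composed with $\mathcal{P}'((F(\epsilon_0),f(m_1)),(F(\epsilon_1),f(m_2)))(dF(\epsilon_1)(\delta\epsilon_1),0)$. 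Summing the three terms yields $\nu_d'$ evaluated at the transformed argument, which is the claimed identity. The only mildly delicate step is the third one, where one must carefully match the definition of $\mathcal{P}'$ with the chain-rule identity for $D_1L_d$; no substantive obstacle arises beyond bookkeeping.
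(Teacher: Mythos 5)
Your proof is correct and is exactly the routine verification the paper has in mind when it declares the result ``immediate'': you check the axioms of a DLDPS for ${\mathcal M}'$, the seven morphism conditions of Definition~\ref{def:category} for $F\times f$ (with the inverse obtained by running the construction on $(F^{-1},f^{-1})$), and the termwise correspondence of the three summands in~\eqref{eq:section_of_motion-def} via the chain rule and the defining formula for ${\mathcal P}'$. No gaps; the paper omits these details entirely, so your write-up simply supplies them.
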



\section{Reduction of discrete Lagrange--D'Alembert--Poincar\'e
  systems}
\label{sec:Reduction_discrete_LDP_systems}

In this section we introduce the notion of symmetry group of a DLDPS
and a reduction procedure to associate a ``reduced'' DLDPS system to a
symmetric one. In addition, we prove that the reduction procedure is a
morphism in $\DLDPSC$ and compare the dynamics of the reduced system
to that of the original one.

\subsection{Discrete Lagrange--D'Alembert--Poincar\'e systems with
  symmetry}
\label{subsec:DLP_systems_with_symmetry}

Let $G$ be a Lie group that acts on the fiber bunble $(E,M,\phi,F)$ as
in Definition~\ref{def:G_action_on_fiber_bundle}. We consider the
$G$-actions on $C'(E)$ and $C''(E)$ given by
\begin{equation}\label{eq:G-action_C'}
  l_{g}^{C'(E)}(\epsilon_0,m_1) := (l_{g}^{E}(\epsilon_0),l_{g}^{M}(m_1)),
\end{equation}
\begin{equation}\label{eq:G-action_C''}
  l_{g}^{C''(E)}((\epsilon_0,m_1),(\epsilon_1,m_2)) :=
  (l_g^{C'(E)}(\epsilon_0,m_1),l_g^{C'(E)}(\epsilon_1,m_2)).
\end{equation}
Also, we consider the $G$-actions on $\ker(d\phi)\subset TE$ and on
$\ti{p_{34}}^*T(C'(E))\subset TC''(E)$ given by
\begin{equation}\label{eq:G-action_TE}
  l_{g}^{TE}(\epsilon_0,\delta \epsilon_0) := 
  dl_{g}^{E}(\epsilon_0)(\delta \epsilon_0)
\end{equation}
\begin{equation}\label{eq:G-action_T(C'(E))}
  l_{g}^{T(C'(E))}(\epsilon_0,m_1)(\delta \epsilon_0,\delta m_1) :=
  (dl_{g}^{E}(\epsilon_0)(\delta \epsilon_0),dl_{g}^{M}(m_1)(\delta m_1))
\end{equation}
\begin{equation}\label{eq:G-action_p3(TE)}
  \begin{split}
    l_{g}^{\ti{p_{34}}^{*}T(C'(E))}((\epsilon_0,m_1),&(\epsilon_1,m_2),(\delta
    \epsilon_1,\delta m_2)) :=
    \\&(l_{g}^{C''(E)}((\epsilon_0,m_1),(\epsilon_1,m_2)),
    dl_{g}^{C'(E)}(\epsilon_1,m_2)(\delta \epsilon_1,\delta m_2)).
  \end{split}
\end{equation}

\begin{lemma}\label{le:prop_Upsilon}
  Let $G$ be a Lie group acting on the fiber bundle
  $\phi:E\rightarrow M$ and $\DC$ a discrete connection on the
  principal $G$-bundle $\pi^{M,G}:M\rightarrow M/G$. We define
  $\Upsilon_{\DC}^{(2)}:C''(E)\rightarrow C''(\ti{G}_E)$ as the
  restriction of
  $\Upsilon_{\DC}\times\Upsilon_{\DC}:C'(E)\times C'(E)\rightarrow
  C'(\ti{G}_E)\times C'(\ti{G}_E)$ to the corresponding spaces, where
  $\Upsilon_{\DC}$ is defined by \eqref{eq:Upsilon_DC-def}. Then,
  \begin{enumerate}
  \item \label{it:prop_Upsilon-well_def}
    $\Upsilon_{\DC}^{(2)}$ is well defined.
  \item \label{it:prop_Upsilon-iso}
    $d\Upsilon_{\DC}(\epsilon_0,m_1)|_{(p_{1}^*TE)_{(\epsilon_0,m_1)}}:
    (p_{1}^*TE)_{(\epsilon_0,m_1)}\rightarrow
    (p_{1}^*T(\ti{G}_E))_{\Upsilon_{\DC}(\epsilon_0,m_1)}$ is an
    isomorphism of vector spaces for every
    $(\epsilon_0,m_1)\in C'(E)$.
  \item \label{it:prop_Upsilon-ppal_bundle}
    $\Upsilon_{\DC}^{(2)}:C''(E)\rightarrow C''(\ti{G}_E)$ is a
    principal $G$-bundle with structure group $G$. In particular,
    $C''(E)/G\simeq C''(\ti{G}_E)$.
  \item \label{it:prop_Upsilon-lifting} For each
    $((v_0,r_1),(v_1,r_2))\in C''(\ti{G}_E)$ and
    $(\epsilon_0,m_1)\in C'(E)$ such that
    $\Upsilon_{\DC}(\epsilon_0,m_1)=(v_0,r_1)$, there is a unique pair
    $(\epsilon_1,m_2)\in C'(E)$ such that
    $((\epsilon_0,m_1),(\epsilon_1,m_2))\in C''(E)$ and
    $\Upsilon_{\DC}^{(2)}((\epsilon_0,m_1),(\epsilon_1,m_2)) =
    ((v_0,r_1),(v_1,r_2))$.
  \end{enumerate}
\end{lemma}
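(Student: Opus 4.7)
My plan is to handle the four points in the order (1), (4), (3), (2), since (4) provides the geometric lifting used to describe the fibers of $\Upsilon_{\DC}^{(2)}$ in (3), while (2) is an independent kernel-and-dimension argument.

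For (1), write $(v_j,r_{j+1}) := \Upsilon_{\DC}(\epsilon_j,m_{j+1})$. The condition $m_1=\phi(\epsilon_1)$ defining $C''(E)$ gives $r_1 = \pi^{M,G}(m_1) = \pi^{M,G}(\phi(\epsilon_1)) = p^{\ti{G}_E}(v_1)$, which is exactly the condition defining $C''(\ti{G}_E)$, so $\Upsilon_{\DC}^{(2)}$ indeed lands in $C''(\ti{G}_E)$. For (4), start from $(\epsilon_0,m_1)$ mapping to $(v_0,r_1)$, and choose any preimage $(\epsilon_1',m_2')$ of $(v_1,r_2)$ under $\Upsilon_{\DC}$; since $\Upsilon_{\DC}$ is a principal $G$-bundle (Lemma~\ref{le:Upsilon_fiber_bundle}), every other preimage has the form $l_g^{C'(E)}(\epsilon_1',m_2')$ for a unique $g\in G$. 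The required membership condition $\phi(\epsilon_1)=m_1$ becomes $l_g^M(\phi(\epsilon_1'))=m_1$ after using $G$-equivariance of $\phi$. Since $\phi(\epsilon_1')$ and $m_1$ both lie in the $\pi^{M,G}$-fiber over $r_1$ and $l^M$ is free, a unique such $g$ exists, yielding the unique $(\epsilon_1,m_2)$.

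For (3), observe that the diagonal $G$-action on $C''(E)$ is free (inherited from freeness on $E$ and $M$), $\Upsilon_{\DC}^{(2)}$ is $G$-invariant and surjective, and by (4) each fiber is exactly one $G$-orbit. Local triviality about any $((v_0,r_1),(v_1,r_2))$ follows by choosing a local smooth section $s_0$ of $\Upsilon_{\DC}$ on a neighborhood of $(v_0,r_1)$, applying (4) pointwise to extend $s_0$ to a smooth section of $\Upsilon_{\DC}^{(2)}$ on a neighborhood, and then translating by $G$ to produce the trivialization. Finally, for (2), since $\Upsilon_{\DC}$ is a principal $G$-bundle, $\ker d\Upsilon_{\DC}(\epsilon_0,m_1)$ equals the tangent space to the $G$-orbit, consisting of vectors $(\xi_E(\epsilon_0),\xi_M(m_1))$ for $\xi\in\jgg$. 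Intersecting with $(p_1^*TE)_{(\epsilon_0,m_1)}$ forces $\xi_M(m_1)=0$, hence $\xi=0$ by freeness of $l^M$, so the restriction is injective. Because $\dim\ti{G}_E=\dim E$, source and target have the same dimension, and injectivity upgrades to an isomorphism.

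The main obstacle is the smoothness in (3): one must show that the assignment of $g\in G$ furnished by (4) depends smoothly on the base data $((v_0',r_1'),(v_1',r_2'))$. This reduces to showing that the $G$-element identifying two points in the same $\pi^{M,G}$-fiber varies smoothly with those points, which holds because $\pi^{M,G}$ is itself a principal $G$-bundle and the associated ``division map'' on the fibered product of $M$ with itself over $M/G$ is smooth.
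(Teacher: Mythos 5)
Your proof is correct, and since the paper itself only cites Lemma 5.1 of the earlier ``reduction by stages'' paper (noting that the argument carries over to affine discrete connections), your self-contained argument supplies exactly the expected details: everything rests on the principal $G$-bundle structure of $\Upsilon_{\DC}$ from Lemma~\ref{le:Upsilon_fiber_bundle}, with the unique group element in point~(4) obtained from freeness of $l^M$ on the fiber over $r_1$, and point~(2) from the kernel-equals-orbit-tangent fact plus $\dim\ti{G}_E=\dim E$. The one step worth making explicit is that $d\Upsilon_{\DC}(\delta\epsilon_0,0)$ really lands in $(p_1^*T\ti{G}_E)_{\Upsilon_{\DC}(\epsilon_0,m_1)}$, i.e.\ has vanishing $M/G$-component, which is immediate since $p_2\circ\Upsilon_{\DC}=\pi^{M,G}\circ p_2$ does not depend on $\epsilon_0$.
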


\begin{proof}
  This result is almost identical to Lemma 5.1
  in~\cite{ar:fernandez_tori_zuccalli-lagrangian_reduction_of_discrete_mechanical_systems_by_stages},
  the only difference being that, here, we are using affine discrete
  connections instead of discrete connections. It is easy to see that
  the proof of Lemma 5.1 remains valid for affine discrete
  connections.
\end{proof}

\begin{proposition}\label{prop:path_relation}
  Let $G$ be a Lie group acting on the fiber bundle
  $\phi:E\rightarrow M$ and $\DC$ be a discrete connection on the
  principal $G$-bundle $\pi^{M,G}:M\rightarrow M/G$. Given a discrete
  path $(v_{\cdot},r_{\cdot})=((v_0,r_1),\ldots,(v_{N-1},r_N))$ in
  $C'(\ti{G}_E)$ and $(\ti{\epsilon}_0,\ti{m}_1)\in C'(E)$ such that
  $\Upsilon_{\DC}(\ti{\epsilon}_0,\ti{m}_1)=(v_0,r_1)$, there is a
  unique discrete path $(\epsilon_{\cdot},m_{\cdot})\in C'(E)$ such
  that $(\epsilon_0,m_1)=(\ti{\epsilon_0},\ti{m_1})$ and
  $\Upsilon_{\DC}(\epsilon_k,m_{k+1})=(v_k,r_{k+1})$ for all $k$.
\end{proposition}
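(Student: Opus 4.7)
The proof is essentially an inductive lifting argument built on top of Lemma~\ref{le:prop_Upsilon}, in particular point~\ref{it:prop_Upsilon-lifting}. The plan is to construct $(\epsilon_\cdot,m_\cdot)$ one pair at a time, starting from $(\epsilon_0,m_1):=(\ti{\epsilon}_0,\ti{m}_1)$, using the lifting property of $\Upsilon_{\DC}^{(2)}$ at each step.

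First, I would set $(\epsilon_0,m_1):=(\ti{\epsilon}_0,\ti{m}_1)$, which by hypothesis already satisfies $\Upsilon_{\DC}(\epsilon_0,m_1)=(v_0,r_1)$. The inductive hypothesis at step $k$ is that we have already produced $(\epsilon_0,m_1),\ldots,(\epsilon_k,m_{k+1})$ forming a discrete path in $C'(E)$ with $\Upsilon_{\DC}(\epsilon_j,m_{j+1})=(v_j,r_{j+1})$ for $j=0,\ldots,k$. Since $(v_\cdot,r_\cdot)$ is a discrete path in $C'(\ti{G}_E)$, the pair $((v_k,r_{k+1}),(v_{k+1},r_{k+2}))$ lies in $C''(\ti{G}_E)$, and together with $(\epsilon_k,m_{k+1})$ satisfying $\Upsilon_{\DC}(\epsilon_k,m_{k+1})=(v_k,r_{k+1})$, we are precisely in the setting of point~\ref{it:prop_Upsilon-lifting} of Lemma~\ref{le:prop_Upsilon}. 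That result yields a unique $(\epsilon_{k+1},m_{k+2})\in C'(E)$ with $((\epsilon_k,m_{k+1}),(\epsilon_{k+1},m_{k+2}))\in C''(E)$ and $\Upsilon_{\DC}^{(2)}((\epsilon_k,m_{k+1}),(\epsilon_{k+1},m_{k+2}))=((v_k,r_{k+1}),(v_{k+1},r_{k+2}))$; in particular $\Upsilon_{\DC}(\epsilon_{k+1},m_{k+2})=(v_{k+1},r_{k+2})$, which propagates the inductive hypothesis to step $k+1$. Iterating this procedure until $k=N-2$ produces the desired $(\epsilon_\cdot,m_\cdot)$.

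Uniqueness follows the same inductive scheme. If $(\epsilon_\cdot',m_\cdot')$ is another discrete path satisfying the conclusion of the proposition, then at step $0$ we have $(\epsilon_0',m_1')=(\ti{\epsilon}_0,\ti{m}_1)=(\epsilon_0,m_1)$. Assuming $(\epsilon_j',m_{j+1}')=(\epsilon_j,m_{j+1})$ for all $j\le k$, the pair $(\epsilon_{k+1}',m_{k+2}')$ must satisfy the hypotheses of point~\ref{it:prop_Upsilon-lifting} of Lemma~\ref{le:prop_Upsilon} applied to $((v_k,r_{k+1}),(v_{k+1},r_{k+2}))$ and $(\epsilon_k,m_{k+1})$, so by the uniqueness part of that statement it coincides with $(\epsilon_{k+1},m_{k+2})$.

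There is no real obstacle here; the only thing to be careful about is that the compatibility condition $m_{k+1}=\phi(\epsilon_{k+1})$ required for $(\epsilon_\cdot,m_\cdot)$ to be a discrete path in $C'(E)$ is exactly the condition that $((\epsilon_k,m_{k+1}),(\epsilon_{k+1},m_{k+2}))\in C''(E)$, which is built into the conclusion of point~\ref{it:prop_Upsilon-lifting}. Thus the whole argument reduces to verifying the hypotheses of that lemma at each inductive step and invoking it.
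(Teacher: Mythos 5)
Your proof is correct: the proposition is exactly the inductive iteration of the one-step lifting property in point~\ref{it:prop_Upsilon-lifting} of Lemma~\ref{le:prop_Upsilon}, and you correctly identify that the fibered-product condition defining $C''(E)$ is what guarantees the lifted pairs concatenate into a discrete path. The paper itself only cites Proposition 5.2 of~\cite{ar:fernandez_tori_zuccalli-lagrangian_reduction_of_discrete_mechanical_systems_by_stages}, whose proof is this same inductive lifting argument, so your approach coincides with the paper's.
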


\begin{proof}
  This is Proposition 5.2
  in~\cite{ar:fernandez_tori_zuccalli-lagrangian_reduction_of_discrete_mechanical_systems_by_stages}
  except for using affine discrete connections instead of discrete
  connections, which doesn't alter the proof.
\end{proof}

\begin{definition}\label{def:G_symmetry_DLDPS}
  Let
  $\mathcal{M}=(E,L_d,\mathcal{D}_d,\mathcal{D},\mathcal{P})\in\Ob_{\DLDPSC}$.
  A Lie group $G$ is a \jdef{symmetry group} of $\mathcal{M}$ if
  \begin{enumerate}
  \item \label{it:G_symmetry_DLDPS-act_on_bundle} $G$ acts on
    $\phi:E\rightarrow M$
    (Definition~\ref{def:G_action_on_fiber_bundle}),
  \item $L_{d}$ is $G$-invariant by the action
    $l^{C'(E)}$~\eqref{eq:G-action_C'},
  \item $\mathcal{D}_d$ is $G$-invariant by the action
    $l^{C'(E)}$~\eqref{eq:G-action_C'},
  \item $\mathcal{D}$ is $G$-invariant by the lifted action
    $l^{TE}$~\eqref{eq:G-action_TE},
  \item \label{it:G_symmetry_DLDPS-P_equiv} $\mathcal{P}$ is $G$-equivariant for the
    actions $l^{\ti{p_{34}}^{*}T(C'(E))}$~\eqref{eq:G-action_p3(TE)} and
    $l^{TE}$~\eqref{eq:G-action_TE}.
  \end{enumerate}
\end{definition}

\begin{remark}\label{remark:G_symmetry_DNHMS}
  In the context of Example~\ref{ex:DNHMS_as_DLDPS}, if $G$ is a
  symmetry group of the nonholonomic discrete mechanical system
  $(Q,L_d,\mathcal{D}_d,\mathcal{D}^{nh})$ in the sense
  of~\cite{ar:fernandez_tori_zuccalli-lagrangian_reduction_of_discrete_mechanical_systems},
  then it is a symmetry group of
  $(Q,L_d,\mathcal{D}_d,\mathcal{D},0) \in \Ob_{\DLDPSC}$ in the sense
  of Definition~\ref{def:G_symmetry_DLDPS}.
\end{remark}

\begin{lemma}\label{le:equivalences}
  Let
  $\mathcal{M} = (E,L_d,\mathcal{D}_d,\mathcal{D},\mathcal{P})\in
  \Ob_{\DLDPSC}$ and $G$ be a Lie group. Then, for $g\in G$, if
  $\Upsilon := l_{g}^{C'(E)}$ and $\mathcal{M}' = \mathcal{M}$,
  \begin{enumerate}
  \item \label{it:equivalences-Pt_1} $\mathcal{D}_d$ is $G$-invariant
    by the diagonal action \eqref{eq:G-action_C'} if and only if
    point~\ref{it:prop_morphism_5} in Definition~\ref{def:category} is
    satisfied for $\Upsilon$,
		
  \item \label{it:equivalences-Pt_2} $\mathcal{D}$ is $G$-invariant by
    the lifted action~\eqref{eq:G-action_T(C'(E))} if and only if
    point~\ref{it:prop_morphism_6} in Definition~\ref{def:category} is
    satisfied for $\Upsilon$,
		
  \item \label{it:equivalences-Pt_3}
    Point~\ref{it:G_symmetry_DLDPS-P_equiv} in
    Definition~\ref{def:G_symmetry_DLDPS} is equivalent to
    point~\ref{it:prop_morphism_7} in Definition~\ref{def:category}
    for $\Upsilon$.
  \end{enumerate}
\end{lemma}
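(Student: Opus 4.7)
The plan is to verify each of the three equivalences by direct unwinding of definitions, using that $\Upsilon = l_g^{C'(E)}$, $\mathcal{M}'=\mathcal{M}$, and that the stated conditions are required to hold for every $g\in G$. For each point, the primed data coincide with the unprimed; the content of the equivalences is then the identification of the $G$-action terms with the differential of $\Upsilon$.

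For point~\ref{it:equivalences-Pt_1}, since $\mathcal{D}_d' = \mathcal{D}_d$, condition~\ref{it:prop_morphism_5} of Definition~\ref{def:category} reads $\mathcal{D}_d = l_g^{C'(E)}(\mathcal{D}_d)$; requiring this for every $g \in G$ is precisely $G$-invariance of $\mathcal{D}_d$ under~\eqref{eq:G-action_C'}. For point~\ref{it:equivalences-Pt_2}, since $\mathcal{D}' = \mathcal{D} \subset p_1^*TE$, condition~\ref{it:prop_morphism_6} becomes $\mathcal{D} = dl_g^{C'(E)}(\mathcal{D})$. Because elements of $\mathcal{D}$ have the form $(\delta\epsilon,0)$, we have $dl_g^{C'(E)}(\epsilon,m)(\delta\epsilon,0) = (dl_g^{E}(\epsilon)(\delta\epsilon),\, dl_g^M(m)(0)) = (dl_g^E(\epsilon)(\delta\epsilon),0)$, so the action under $l^{T(C'(E))}$ restricted to $\mathcal{D}$ agrees with $l^{TE}$ on the first slot (and both produce the zero vector on the second). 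Requiring invariance for every $g$ therefore matches the corresponding clause in Definition~\ref{def:G_symmetry_DLDPS}.

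For point~\ref{it:equivalences-Pt_3}, I would carefully unpack~\eqref{eq:prop_7} with $\Upsilon = l_g^{C'(E)}$ and $\mathcal{P}' = \mathcal{P}$. First, $\Upsilon^{(2)} = l_g^{C''(E)}$ by definition of the two actions~\eqref{eq:G-action_C'} and~\eqref{eq:G-action_C''}. Second, the same calculation as above gives $d\Upsilon(\epsilon_1,m_2)(\delta\epsilon_1,0) = (dl_g^E(\epsilon_1)(\delta\epsilon_1),0)$, so the left-hand side of~\eqref{eq:prop_7} becomes
\begin{equation*}
\mathcal{P}\bigl(l_g^{C''(E)}((\epsilon_0,m_1),(\epsilon_1,m_2))\bigr)\bigl(dl_g^E(\epsilon_1)(\delta\epsilon_1),0\bigr).
\end{equation*}
Third, since $p_1 \circ l_g^{C'(E)} = l_g^E \circ p_1$, the map $d(p_1\circ\Upsilon)(\epsilon_0,m_1)$ extracts the first component and applies $dl_g^E(\epsilon_0)$. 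Using that $\mathcal{P}(\ldots)(\delta\epsilon_1,0) \in \ker(d\phi) \subset T_{\epsilon_0}E$, the right-hand side of~\eqref{eq:prop_7} reduces to
\begin{equation*}
dl_g^E(\epsilon_0)\bigl(\mathcal{P}((\epsilon_0,m_1),(\epsilon_1,m_2))(\delta\epsilon_1,0)\bigr) = l_g^{TE}\bigl(\mathcal{P}((\epsilon_0,m_1),(\epsilon_1,m_2))(\delta\epsilon_1,0)\bigr).
\end{equation*}
Matching these two expressions for all $g \in G$ and all admissible arguments is exactly the equivariance of $\mathcal{P}$ with respect to $l^{\ti{p_{34}}^{*}T(C'(E))}$ and $l^{TE}$ demanded in point~\ref{it:G_symmetry_DLDPS-P_equiv} of Definition~\ref{def:G_symmetry_DLDPS} (noting again that the $M$-slot of the action on $(\delta\epsilon_1,0)$ is $0$).

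The proof is essentially bookkeeping; the main place to be careful is the right-hand side of~\eqref{eq:prop_7}, where the seemingly extraneous entry $d\phi(\epsilon_1)(\delta\epsilon_1)$ appears to play a role but in fact drops out because $d(p_1\circ\Upsilon)$ only sees the first argument. Once this is observed, the three equivalences follow immediately.
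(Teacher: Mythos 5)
Your proposal is correct and follows essentially the same route as the paper: points~\ref{it:equivalences-Pt_1} and~\ref{it:equivalences-Pt_2} are immediate from the definitions of the actions, and for point~\ref{it:equivalences-Pt_3} the paper likewise rewrites the left-hand side of~\eqref{eq:prop_7} as $\mathcal{P}$ applied to the $l^{\ti{p_{34}}^{*}T(C'(E))}$-translate and the right-hand side as $dl_g^E(\epsilon_0)$ applied to $\mathcal{P}(\cdot)(\delta\epsilon_1,0)$ (with the $d\phi(\epsilon_1)(\delta\epsilon_1)$ slot dropping out exactly as you observe), then matches these with the two sides of the equivariance condition.
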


\begin{proof}
  Points~\ref{it:equivalences-Pt_1} and~\ref{it:equivalences-Pt_2} are
  directly satisfied by the definitions of $l_{g}^{C'(E)}$ and
  $l_{g}^{T(C'(E))}$.

  To prove point~\ref{it:equivalences-Pt_3} we start by noting that
  \begin{equation*}
    \begin{split}
      \mathcal{P}(l_{g}^{C''(E)}((\epsilon_0,m_1),
      (\epsilon_1,m_2)))&(dl_{g}^{C'(E)}(\epsilon_1,m_2)(\delta\epsilon_1,0))
      \\=& \mathcal{P}(l_{g}^{\ti{p_{34}}^{*}T(C'(E))}((\epsilon_0,m_1),
      (\epsilon_1,m_2)),(\delta\epsilon_1,0))
    \end{split}
  \end{equation*}
  and
  \begin{equation*}
    \begin{split}
      d(p_{1}\circ
      l_{g}^{C'(E)})(\epsilon_0,m_1)(\mathcal{P}((\epsilon_0,m_1),&
      (\epsilon_1,m_2))(\delta\epsilon_1,0),d\phi(\epsilon_1)(\delta\epsilon_1))
      \\=& dl_{g}^{E}(\epsilon_0)(\mathcal{P}((\epsilon_0,m_1),
      (\epsilon_1,m_2))(\delta\epsilon_1,0)).
    \end{split}
  \end{equation*}
  Then, by point~\ref{it:prop_morphism_7} in
  Definition~\ref{def:category} for $\Upsilon:=l_{g}^{C'(E)}$ and
  $\mathcal{M}'=\mathcal{M}$ we have that the first members of the
  previous identities are the same, and by
  point~\ref{it:G_symmetry_DLDPS-P_equiv} of
  Definition~\ref{def:G_symmetry_DLDPS} the last members of the
  previous identities are the same, proving the equivalence of the
  conditions.
\end{proof}

\begin{proposition}
  Let
  $\mathcal{M} =
  (E,L_d,\mathcal{D}_d,\mathcal{D},\mathcal{P})\in\Ob_{\DLDPSC}$ and
  $G$ be a Lie group. Then, $G$ is a symmetry group of $\mathcal{M}$
  if and only if $G$ acts on the fiber bundle $\phi:E\rightarrow M$
  and $l_{g}^{C'(E)}\in \Mor_{\DLDPSC}(\mathcal{M},\mathcal{M})$ for
  all $g\in G$.
\end{proposition}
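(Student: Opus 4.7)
The plan is to show that the two sets of hypotheses are in fact the same set of hypotheses, just packaged differently. Both sides of the equivalence presuppose that $G$ acts on the fiber bundle $\phi:E\rightarrow M$ (condition~\ref{it:G_symmetry_DLDPS-act_on_bundle} of Definition~\ref{def:G_symmetry_DLDPS} on one side, an explicit hypothesis on the other), so this common assumption gives a sensible $\Upsilon := l_g^{C'(E)}$ via~\eqref{eq:G-action_C'}, and the task reduces to matching the remaining four clauses of Definition~\ref{def:G_symmetry_DLDPS} against the seven clauses of Definition~\ref{def:category} for $\mathcal{M}' = \mathcal{M}$ and $\Upsilon = l_g^{C'(E)}$.

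First I would dispose of the three clauses of Definition~\ref{def:category} that are automatic whenever $G$ acts on $\phi:E\to M$. For point~\ref{it:prop_morphism_1}, $l_g^{C'(E)}$ is a diffeomorphism with inverse $l_{g^{-1}}^{C'(E)}$, hence a surjective submersion. For point~\ref{it:prop_morphism_2}, the definition $l_g^{C'(E)}(\epsilon,m) = (l_g^E(\epsilon),l_g^M(m))$ gives $p_2\circ l_g^{C'(E)}(\epsilon,m) = l_g^M(m)$, which does not depend on $\epsilon$, so $D_1(p_2\circ \Upsilon)=0$. For point~\ref{it:prop_morphism_3}, take $((\epsilon_0,m_1),(\epsilon_1,m_2))\in C''(E)$, so $m_2 = \phi(\epsilon_1)$; applying the left-hand side of~\eqref{eq:prop_morphism_3} gives $l_g^M(m_2)$, while the right-hand side gives $\phi(l_g^E(\epsilon_1)) = l_g^M(\phi(\epsilon_1)) = l_g^M(m_2)$, using that $\phi$ is $G$-equivariant (part of the action on the fiber bundle).

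Then I would pair off the remaining clauses. Clause~\ref{it:prop_morphism_4} of Definition~\ref{def:category}, $L_d = L_d\circ l_g^{C'(E)}$, is precisely the $G$-invariance of $L_d$, i.e.\ clause 2 of Definition~\ref{def:G_symmetry_DLDPS}. Clauses~\ref{it:prop_morphism_5},~\ref{it:prop_morphism_6} and~\ref{it:prop_morphism_7} of Definition~\ref{def:category} correspond exactly to clauses 3, 4 and~\ref{it:G_symmetry_DLDPS-P_equiv} of Definition~\ref{def:G_symmetry_DLDPS}, by points~\ref{it:equivalences-Pt_1},~\ref{it:equivalences-Pt_2} and~\ref{it:equivalences-Pt_3} of Lemma~\ref{le:equivalences}, respectively. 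Assembling these equivalences in either direction proves the proposition.

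There is no real obstacle here: the content is entirely absorbed by Lemma~\ref{le:equivalences}, and the rest is straightforward bookkeeping on clauses whose verification for a group action is immediate from the definitions.
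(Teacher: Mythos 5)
Your proof is correct and follows essentially the same route as the paper: the paper likewise verifies clauses~\ref{it:prop_morphism_1}--\ref{it:prop_morphism_4} of Definition~\ref{def:category} directly (citing its predecessor paper for the details you spell out) and delegates clauses~\ref{it:prop_morphism_5}--\ref{it:prop_morphism_7} to Lemma~\ref{le:equivalences}, exactly as you do. One small index slip: for $((\epsilon_0,m_1),(\epsilon_1,m_2))\in C''(E)$ the fiber-product condition is $m_1=\phi(\epsilon_1)$ (not $m_2=\phi(\epsilon_1)$), and the left-hand side of~\eqref{eq:prop_morphism_3} evaluates to $l_g^M(m_1)$; with that correction your verification of clause~\ref{it:prop_morphism_3} goes through verbatim.
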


\begin{proof}
  Assume that $G$ is a symmetry group of $\mathcal{M}$. Then, by
  definition, $G$ acts on the fiber bundle $\phi:E\rightarrow M$. We
  have to prove that
  $l_{g}^{C'(E)}\in \Mor_{\DLDPSC}(\mathcal{M},\mathcal{M})$ for all
  $g\in G$.
	
  Proving that $l_{g}^{C'(E)}$ satisfies
  conditions~\ref{it:prop_morphism_1} to~\ref{it:prop_morphism_4} of
  Definition~\ref{def:category} is analogous to what was done in the
  proof of the Proposition 5.6
  in~\cite{ar:fernandez_tori_zuccalli-lagrangian_reduction_of_discrete_mechanical_systems_by_stages}.
  Lemma~\ref{le:equivalences} proves that $l_{g}^{C'(E)}$ satisfies
  the remaining conditions of the Definition~\ref{def:category}. Thus,
  $l_{g}^{C'(E)}\in \Mor_{\DLDPSC}(\mathcal{M},\mathcal{M})$.
	
  Conversely, if $G$ acts on the fiber bundle $\phi:E\rightarrow M$
  and $l_{g}^{C'(E)}\in \Mor_{\DLDPSC}(\mathcal{M},\mathcal{M})$, the
  first condition of Definition~\ref{def:G_symmetry_DLDPS} is
  satisfied and the remaining conditions follow from morphism's
  properties and Lemma~\ref{le:equivalences}.
\end{proof}


\subsection{Reduced discrete Lagrange--D'Alembert--Poincar\'e system}
\label{subsec:Reduced_DLP_system}

Let $G$ be a symmetry group of
$\mathcal{M} = (E,L_d,\mathcal{D}_d,\mathcal{D},\mathcal{P}) \in
\Ob_{\DLDPSC}$. Since $G$ acts on $(E,M,\phi,F)$ the conjugate bundle
$(\ti{G}_E,M/G,p^{M/G},F\times G)$ is a fiber bundle (see Section 9
in~\cite{ar:fernandez_tori_zuccalli-lagrangian_reduction_of_discrete_mechanical_systems_by_stages}).

Let $\DC$ be a discrete connection on the principal $G$-bundle
$\pi^{M,G}:M\rightarrow M/G$ and
$\Upsilon _{\DC}:E\times M\rightarrow\ti{G}_{E}\times (M/G)$ be the
map defined by~\eqref{eq:Upsilon_DC-def} that is a principal bundle
with structure group $G$ by Lemma~\ref{le:Upsilon_fiber_bundle}.

We define $\check{L}_d:\ti{G}_{E}\times(M/G) \rightarrow \R$ by
$\check{L}_d(v_0,r_1):=L_d(\epsilon_0,m_1)$ for any
$(\epsilon_0,m_1) \in\Upsilon_{\DC}^{-1}(v_0,r_1)$ that, by the
$G$-invariance of $L_d$, is well defined. Hence,
$\check{L}_d\circ \Upsilon_{\DC} = L_d$.

\begin{lemma}\label{le:check_D_submanifold}
  The space $\check{\mathcal{D}}_d := \Upsilon_{\DC}(\mathcal{D}_d)$
  is a regular submanifold of $\ti{G}_E \times(M/G)$. Also,
  $\mathcal{D}_d = \Upsilon_{\DC}^{-1}(\check{\mathcal{D}}_d)$.
\end{lemma}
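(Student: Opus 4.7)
The plan is to derive both assertions directly from the principal bundle structure of $\Upsilon_{\DC}$ (Lemma~\ref{le:Upsilon_fiber_bundle}) together with the $G$-invariance of $\mathcal{D}_d$ that comes from $G$ being a symmetry group of $\mathcal{M}$.

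First, for the submanifold claim, I would invoke Proposition~\ref{prop:bundle_image_submanifold} with $\rho := \Upsilon_{\DC} : E \times M \rightarrow \ti{G}_E \times (M/G)$, $Z := \mathcal{D}_d$, and $S := \check{\mathcal{D}}_d = \Upsilon_{\DC}(\mathcal{D}_d)$. The hypotheses to check are that $\rho$ is a principal $G$-bundle, that $Z$ is a regular submanifold of $X$, and that $Z$ is $G$-invariant. The first is exactly Lemma~\ref{le:Upsilon_fiber_bundle}. The second is part of the definition of a DLDPS. The third follows from $G$ being a symmetry group of $\mathcal{M}$: by Definition~\ref{def:G_symmetry_DLDPS}, $\mathcal{D}_d$ is $l^{C'(E)}$-invariant, which is precisely the $G$-action on the total space of $\Upsilon_{\DC}$.

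Second, for the equality $\mathcal{D}_d = \Upsilon_{\DC}^{-1}(\check{\mathcal{D}}_d)$, the inclusion $\mathcal{D}_d \subseteq \Upsilon_{\DC}^{-1}(\Upsilon_{\DC}(\mathcal{D}_d))$ is trivial. For the reverse, let $(\epsilon,m) \in \Upsilon_{\DC}^{-1}(\check{\mathcal{D}}_d)$, so that $\Upsilon_{\DC}(\epsilon,m) \in \Upsilon_{\DC}(\mathcal{D}_d)$; then there exists $(\epsilon',m') \in \mathcal{D}_d$ with $\Upsilon_{\DC}(\epsilon',m') = \Upsilon_{\DC}(\epsilon,m)$. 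Since $\Upsilon_{\DC}$ is a principal $G$-bundle, its fibers coincide with the $G$-orbits of $l^{C'(E)}$, so there exists $g \in G$ with $(\epsilon,m) = l_g^{C'(E)}(\epsilon',m')$. The $G$-invariance of $\mathcal{D}_d$ then yields $(\epsilon,m) \in \mathcal{D}_d$.

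I do not anticipate a serious obstacle: once Lemma~\ref{le:Upsilon_fiber_bundle} and Proposition~\ref{prop:bundle_image_submanifold} are in place, both statements are essentially formal consequences of the fact that $\Upsilon_{\DC}$ is a quotient map by the $G$-action that preserves $\mathcal{D}_d$. The only minor point worth stating explicitly is the identification of the fibers of $\Upsilon_{\DC}$ with the $G$-orbits, which justifies the saturation argument in the second half.
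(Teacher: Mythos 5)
Your proposal is correct and follows essentially the same route as the paper: both parts rest on Lemma~\ref{le:Upsilon_fiber_bundle} (the principal $G$-bundle structure of $\Upsilon_{\DC}$), the $G$-invariance of $\mathcal{D}_d$ from Definition~\ref{def:G_symmetry_DLDPS}, and Proposition~\ref{prop:bundle_image_submanifold} for the submanifold claim. The only difference is that you spell out the fiber-equals-orbit saturation argument for $\mathcal{D}_d = \Upsilon_{\DC}^{-1}(\check{\mathcal{D}}_d)$, which the paper states without detail.
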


\begin{proof}
  Since $\Upsilon_{\DC}: E \times M \rightarrow \ti{G}_E \times(M/G)$
  is a principal $G$-bundle and $\mathcal{D}_d$ is a $G$-invariant
  regular submanifold of $E \times M$, by
  Proposition~\ref{prop:bundle_image_submanifold},
  $\check{\mathcal{D}}_d$ is a regular submanifold of
  $\ti{G}_E \times(M/G)$. Also, as $\mathcal{D}_d$ is $G$-invariant
  and $\Upsilon_{\DC}$ is a principal $G$-bundle, we have
  $\mathcal{D}_d = \Upsilon_{\DC}^{-1}(\Upsilon_{\DC}(\mathcal{D}_d))
  = \Upsilon_{\DC}^{-1}(\check{\mathcal{D}}_d)$.
\end{proof}

\begin{lemma}\label{le:check_D_subbundle}
  The space $\check{\mathcal{D}}:=d\Upsilon_{\DC}(\mathcal{D})$ is a
  subbunble of $p_{1}^*(T(\ti{G}_E))$ where
  $p_{1}:C'(\ti{G}_E)\rightarrow \ti{G}_E$ is the projection onto the
  first factor. In addition,
  $\rank(\check{\mathcal{D}}) = \rank(\mathcal{D})$.
\end{lemma}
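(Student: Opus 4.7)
The plan is to exploit three ingredients already in place: (i) that $\Upsilon_{\DC}: C'(E) \to C'(\ti{G}_E)$ is a principal $G$-bundle (Lemma~\ref{le:Upsilon_fiber_bundle}), so it admits local smooth sections and every $G$-invariant geometric object on $C'(E)$ descends; (ii) that by point~\ref{it:prop_Upsilon-iso} of Lemma~\ref{le:prop_Upsilon}, the restriction
\begin{equation*}
  d\Upsilon_{\DC}(\epsilon_0,m_1)|_{(p_1^*TE)_{(\epsilon_0,m_1)}} \colon (p_1^*TE)_{(\epsilon_0,m_1)} \longrightarrow (p_1^*T(\ti{G}_E))_{\Upsilon_{\DC}(\epsilon_0,m_1)}
\end{equation*}
is a linear isomorphism for every $(\epsilon_0,m_1)$; and (iii) the $G$-invariance of $\mathcal{D}$ from Definition~\ref{def:G_symmetry_DLDPS}. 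Point (ii) already shows that $d\Upsilon_{\DC}$ sends the total space of $\mathcal{D}$ into the total space of $p_1^*T(\ti{G}_E)$, so $\check{\mathcal{D}} \subset p_1^*T(\ti{G}_E)$, and that on each fiber the image has dimension $\operatorname{rank}(\mathcal{D})$.

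Next I would verify that the fiber of $\check{\mathcal{D}}$ above a point $(v_0,r_1)\in C'(\ti{G}_E)$ is unambiguously defined, that is, independent of the chosen preimage under $\Upsilon_{\DC}$. If $(\epsilon_0,m_1)$ and $(\epsilon_0',m_1')$ both map to $(v_0,r_1)$, then $(\epsilon_0',m_1') = l^{C'(E)}_g(\epsilon_0,m_1)$ for some $g\in G$. From $\Upsilon_{\DC}\circ l^{C'(E)}_g = \Upsilon_{\DC}$ one obtains $d\Upsilon_{\DC}(\epsilon_0',m_1')\circ dl^{C'(E)}_g(\epsilon_0,m_1) = d\Upsilon_{\DC}(\epsilon_0,m_1)$, and combined with the $G$-invariance $dl^{C'(E)}_g(\mathcal{D}_{(\epsilon_0,m_1)}) = \mathcal{D}_{(\epsilon_0',m_1')}$ this yields $d\Upsilon_{\DC}(\mathcal{D}_{(\epsilon_0,m_1)}) = d\Upsilon_{\DC}(\mathcal{D}_{(\epsilon_0',m_1')})$. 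Hence $\check{\mathcal{D}}$ has a well-defined fiber of constant dimension $\operatorname{rank}(\mathcal{D})$ above each point of $C'(\ti{G}_E)$; in particular, once subbundle status is established, $\operatorname{rank}(\check{\mathcal{D}}) = \operatorname{rank}(\mathcal{D})$.

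To promote this fiberwise construction to a smooth subbundle, I would use a local smooth section $\sigma \colon U \to C'(E)$ of the principal $G$-bundle $\Upsilon_{\DC}$ over an open $U \subset C'(\ti{G}_E)$. Since $\mathcal{D}$ is a subbundle of $p_1^*TE$, after possibly shrinking $U$, there is a smooth local frame $\{X_1,\ldots,X_r\}$ for $\mathcal{D}$ defined on a neighborhood of $\sigma(U)$. Define $Y_i(u) := d\Upsilon_{\DC}(\sigma(u))\bigl(X_i(\sigma(u))\bigr)$ on $U$. These are smooth sections of $p_1^*T(\ti{G}_E)|_U$ by smoothness of $\sigma$, $\Upsilon_{\DC}$ and the $X_i$; they take values in $\check{\mathcal{D}}$ by definition; and they are pointwise linearly independent by the fiberwise isomorphism from Lemma~\ref{le:prop_Upsilon}. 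Since their span at each $u\in U$ has dimension $r=\operatorname{rank}(\mathcal{D})$, which equals the fiber dimension of $\check{\mathcal{D}}$ at $u$ by the previous paragraph, $\{Y_1,\ldots,Y_r\}$ is a smooth local frame for $\check{\mathcal{D}}$ over $U$. This establishes $\check{\mathcal{D}}$ as a smooth rank-$r$ subbundle of $p_1^*T(\ti{G}_E)$.

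The main subtlety, though not really an obstacle, is the well-definedness check in the second paragraph: it uses in an essential way both the $G$-invariance of $\mathcal{D}$ (part of being a symmetry) and the $G$-invariance of $\Upsilon_{\DC}$ (built into~\eqref{eq:Upsilon_DC-def}), so this step is where the symmetry hypothesis actually enters. Everything else reduces to routine pushforward of a local frame along a smooth section, made rigorous by the fiberwise isomorphism of Lemma~\ref{le:prop_Upsilon}.
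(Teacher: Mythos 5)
Your proposal is correct and follows essentially the same route as the paper's proof: the well-definedness of the fiber via the $G$-invariance of $\mathcal{D}$ and of $\Upsilon_{\DC}$, the rank count via the fiberwise isomorphism of Lemma~\ref{le:prop_Upsilon}, and smoothness by pushing a local frame of $\mathcal{D}$ forward along a local section of the principal $G$-bundle $\Upsilon_{\DC}$. The only (immaterial) difference is the order in which the local frame and the local section are chosen.
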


\begin{proof}	
  Given $(v,r) \in C'(\ti{G}_E)$ and $(\epsilon,m) \in C'(E)$ such
  that $(v,r)=\Upsilon_{\DC}(\epsilon,m)$, we want to prove
  that the subspace
  $d\Upsilon_{\DC}(v,r)(\mathcal{D}_{(\epsilon,m)})\subset
  T_{(v,r)}C'(\ti{G}_E)$ is independent of the particular choice of
  $(\epsilon,m)$ in $\Upsilon_{\DC}^{-1}(v,r)$.
	
  If $(v,r) = \Upsilon_{\DC}(\bar{\epsilon},\bar{m})$ then, since
  $\Upsilon_{\DC}:C'(E)\rightarrow C'(\ti{G}_E)$ is a $G$-principal
  bundle, there is $g \in G$ such that
  $(\bar{\epsilon},\bar{m}) = l_{g}^{C'(E)}(\epsilon,m)$. Since
  $\mathcal{D}$ is $G$-invariant we have that
  \begin{equation*}
    \mathcal{D}_{(\bar{\epsilon},\bar{m})} = \mathcal{D}_{l_{g}^{C'(E)}(\epsilon,m)}
    = dl_{g}^{C'(E)}(\epsilon,m)(\mathcal{D}_{(\epsilon,m)}),
  \end{equation*}
  and, as $\Upsilon_{\DC}$ is $G$-invariant,
  \begin{equation*}
    \begin{split}
      d\Upsilon_{\DC}(\bar{\epsilon},\bar{m})
      (\mathcal{D}_{(\bar{\epsilon},\bar{m})}) =&
      d\Upsilon_{\DC}(l_{g}^{C'(E)}(\epsilon,m))
      (dl_{g}^{C'(E)}(\mathcal{D}_{(\epsilon,m)})) \\=&
      d(\Upsilon_{\DC} \circ
      l_{g}^{C'(E)})(\epsilon,m)(\mathcal{D}_{(\epsilon,m)}) \\=&
      d\Upsilon_{\DC}(\epsilon,m)(\mathcal{D}_{(\epsilon,m)}).
    \end{split}
  \end{equation*}
  Then $d\Upsilon_{\DC}(\epsilon,m)(\mathcal{D}_{(\epsilon,m)})$ is a
  vector subspace of $T_{(v,r)}(C'(\ti{G}_E))$ for each
  $(v,r)\in C'(\ti{G}_E)$ and is independent of the particular
  $(\epsilon,m)$ chosen in $\Upsilon_{\DC}^{-1}(v,r)$; we call it
  $\check{\mathcal{D}}_{(v,r)}$. This construction gives a fiberwise
  vector structure to $\check{\mathcal{D}}$. That
  $\rank(\check{\mathcal{D}}) = \rank(\mathcal{D})$ follows
  immediately from point~\ref{it:prop_Upsilon-iso} in
  Lemma~\ref{le:prop_Upsilon}.
	
  Now we need to check that for every $(v,r)\in C'(\ti{G}_E)$ there
  exist smooth sections defined in an open neighborhood of $(v,r)$
  that generate $\check{\mathcal{D}}_{(v',r')}$ for all $(v',r')$ in
  that neighborhood. To do this, notice that given
  $(v,r)\in C'(\ti{G}_E)$, for any
  $(\epsilon,m)\in \Upsilon_{\DC}^{-1}(v,r)$, as $\mathcal{D}$ is a
  subbundle of $T(C'(E))$, there is an open neighborhood
  $U\subset C'(E)$ of $(\epsilon,m)$ and
  $d = \dim(\mathcal{D}_{(\epsilon,m)})$ smooth local sections
  $\sigma_1,\ldots,\sigma_d :U\rightarrow T(C'(E))$ such that
  $\{\sigma_1(\epsilon',m'),\ldots,\sigma_d(\epsilon',m')\}$ is a
  basis of $\mathcal{D}_{(\epsilon',m')}$ for each
  $(\epsilon',m')\in U$ (Lemma 10.32
  in~\cite{bo:lee-introduction_to_smooth_manifolds}).  Then,
  $\Upsilon_{\DC}(U)$ is an open neighborhood of $(v,r)$ (because
  $\Upsilon_{\DC}$, being a principal bundle map, is an open map; see
  Lemma 21.1~\cite{bo:lee-introduction_to_smooth_manifolds}). In
  addition, as $\Upsilon_{\DC}$ is a principal bundle, there is an
  open neighborhood $V\subset \Upsilon_{\DC}(U)$ of $(v,r)$ and a
  smooth section $\Sigma:V\rightarrow U$ of $\Upsilon_{\DC}$. Define
  $\eta_j := d\Upsilon_{\DC} \circ \sigma_j \circ \Sigma$,
  $j=1,\ldots,d$, which are smooth sections over $V$ of
  $TC'(\ti{G}_E)$ such that, for each $(v',r')\in V$,
  $\{\eta_1(v',r'),\ldots, \eta_d(v',r')\}$ generates
  $\check{\mathcal{D}}_{(v',r')}$.
\end{proof}

\begin{remark}\label{rem_isomorphism}
  As, by point~\ref{it:prop_Upsilon-iso} of
  Lemma~\ref{le:prop_Upsilon},
  $d\Upsilon_{\DC}(\epsilon_0,m_1):(p_{1}^*TE)_{(\epsilon_0,m_1)}\rightarrow
  (p_{1}^*T(\ti{G}_E))_{\Upsilon_{\DC}(\epsilon_0,m_1)}$ is an
  isomorphism,
  $d\Upsilon_{\DC}(\epsilon_0,m_1)|_{\mathcal{D}_{(\epsilon_0,m_1)}}$
  is an isomorphism from $\mathcal{D}_{(\epsilon_0,m_1)}$ onto
  $\check{\mathcal{D}}_{\Upsilon_{\DC}(\epsilon_0,m_1)}$.
\end{remark}

As, by point~\ref{it:prop_Upsilon-ppal_bundle} of
Lemma~\ref{le:prop_Upsilon} $\Upsilon_{\DC}^{(2)}$ is a principal
$G$-bundle, given $((v_0,r_1),(v_1,r_2)) \in C''(\ti{G}_E)$, there
are $((\epsilon_0,m_1),(\epsilon_1,m_2))\in C''(E)$ such that
$\Upsilon_{\DC}^{(2)}((\epsilon_0,m_1),(\epsilon_1,m_2)) =
((v_0,r_1),(v_1,r_2))$. We fix one element in the $G$-orbit formed by
those elements. Using Remark~\ref{rem_isomorphism}, given
$(((v_0,r_1),(v_1,r_2)),(\delta v_1,0)) \in
\ti{p_{34}}^{*}(\check{\mathcal{D}})$ there is a unique
$(((\epsilon_0,m_1),(\epsilon_1,m_2)),(\delta\epsilon_1,0))\in
\ti{p_{34}}^{*}(\mathcal{D})$ such that
$(\delta v_1,0) =
d\Upsilon_{\DC}(\epsilon_1,m_2)(\delta\epsilon_1,0)$. For the previous
choices, let
\begin{equation}\label{eq:P_check-def}
  \begin{split}
    \check{\mathcal{P}}((v_0,r_1),(v_1,r_2))&(\delta v_1,0) :=
    D_1(p_{1}\circ \Upsilon_{\DC})(\epsilon_0,m_1)
    (\mathcal{P}((\epsilon_0,m_1), (\epsilon_1,m_2))(\delta \epsilon_1,0)) \\
    & \phantom{(\delta v_1,0) :=} +D_2(p_{1}\circ
    \Upsilon_{\DC})(\epsilon_0,m_1)(d\phi(\epsilon_1)(\delta\epsilon_1))
    \\=& d(p_1 \circ \Upsilon_{\DC})(\epsilon_0,m_1)
    (\mathcal{P}((\epsilon_0,m_1),(\epsilon_1,m_2))(\delta
    \epsilon_1,0),d\phi(\epsilon_1)(\delta \epsilon_1)).
  \end{split}
\end{equation}

\begin{lemma}\label{le:P_reduced_is_well_defined}
  Under the previous conditions, the map $\check{\mathcal{P}}$ defined
  by~\eqref{eq:P_check-def} is a well defined element of
  $\hom(\ti{p_{34}}^*(\check{\mathcal{D}}),\ker(d p^{M/G}))$.
\end{lemma}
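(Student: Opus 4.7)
The plan is to verify the three requirements for $\check{\mathcal{P}}$: (a) the formula~\eqref{eq:P_check-def} is independent of the auxiliary choice of lift $((\epsilon_0,m_1),(\epsilon_1,m_2)) \in C''(E)$ of $((v_0,r_1),(v_1,r_2))$; (b) the resulting vector lies in $\ker(dp^{M/G}) \subset T_{v_0}\ti{G}_E$; and (c) the assignment is fiberwise linear and smooth, so defines a vector bundle homomorphism over $\ti{p_1}$. Once a lift is fixed, Remark~\ref{rem_isomorphism} already pins down $(\delta\epsilon_1,0) \in \mathcal{D}_{(\epsilon_1,m_2)}$ uniquely from $(\delta v_1,0)$, so step (a) only concerns the ambiguity of the lift within a single $G$-orbit as provided by point~\ref{it:prop_Upsilon-ppal_bundle} of Lemma~\ref{le:prop_Upsilon}.

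For (a), two lifts differ by $l_g^{C''(E)}$ for some $g\in G$. The $G$-invariance of $\Upsilon_{\DC}$ combined with Remark~\ref{rem_isomorphism} identifies the new unique lift of $(\delta v_1,0)$ as $dl_g^E(\epsilon_1)(\delta\epsilon_1)$. Using the $G$-equivariance of $\mathcal{P}$ (point~\ref{it:G_symmetry_DLDPS-P_equiv} of Definition~\ref{def:G_symmetry_DLDPS}), the vector $\mathcal{P}(\ldots)(\delta\epsilon_1,0)$ becomes $dl_g^E(\epsilon_0)(\mathcal{P}(\ldots)(\delta\epsilon_1,0))$, and $G$-equivariance of $\phi$ transforms $d\phi(\epsilon_1)(\delta\epsilon_1)$ by $dl_g^M(m_1)$. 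Therefore the pair of arguments fed into $d(p_1\circ \Upsilon_{\DC})$ changes by $dl_g^{C'(E)}(\epsilon_0,m_1)$; since $p_1\circ \Upsilon_{\DC}$ is $G$-invariant, its differential absorbs this factor and the value of~\eqref{eq:P_check-def} is unchanged.

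For (b), the commutative diagram~\eqref{eq:ExM_and_tiGxM/G} yields $p^{M/G}\circ p_1 \circ \Upsilon_{\DC} = \pi^{M,G}\circ \phi \circ p_1$. Differentiating at $(\epsilon_0,m_1)$ and evaluating on the pair $(\mathcal{P}(\ldots)(\delta\epsilon_1,0),\, d\phi(\epsilon_1)(\delta\epsilon_1))$ yields $d\pi^{M,G}(\phi(\epsilon_0))(d\phi(\epsilon_0)(\mathcal{P}(\ldots)(\delta\epsilon_1,0)))$, which vanishes because $\mathcal{P}$ takes values in $\ker(d\phi)$. Fiberwise linearity in $(\delta v_1,0)$ is immediate from the linearity of $\mathcal{P}$ in its vector argument, of the differentials involved, and of the inverse in Remark~\ref{rem_isomorphism} that produces $\delta\epsilon_1$ from $\delta v_1$. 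Smoothness follows by locally choosing a smooth section of the principal $G$-bundle $\Upsilon_{\DC}^{(2)}$ to lift the base point, and then using the smooth inverse of the isomorphism in Remark~\ref{rem_isomorphism} to lift the vector, after which smoothness of $\mathcal{P}$ and of $d(p_1\circ \Upsilon_{\DC})$ finishes the argument. The main technical obstacle is the bookkeeping in (a): three distinct $G$-equivariances (of $\mathcal{P}$, of $\phi$, and of the lift provided by Remark~\ref{rem_isomorphism}) must assemble into the single transformation $dl_g^{C'(E)}(\epsilon_0,m_1)$ which is then killed by the $G$-invariance of $p_1\circ \Upsilon_{\DC}$; parts (b) and (c) are routine once (a) is in place.
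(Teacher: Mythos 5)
Your proof is correct and follows essentially the same route as the paper, which simply defers to Lemma 5.10 of the authors' earlier reduction-by-stages paper (adapted to $\ti{p_{34}}^*(\check{\mathcal{D}})$ and using the $G$-invariance of $\mathcal{D}$): the standard descent argument that checks independence of the lift via the $G$-equivariance of $\mathcal{P}$, $\phi$ and $\Upsilon_{\DC}$, verifies the $\ker(dp^{M/G})$ condition from the identity $p^{M/G}\circ p_1\circ\Upsilon_{\DC}=\pi^{M,G}\circ\phi\circ p_1$ together with $\mathcal{P}$ taking values in $\ker(d\phi)$, and obtains linearity and smoothness from local sections. Your write-up correctly supplies the details that the paper leaves to the citation.
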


\begin{proof}
  The proof is similar to the proof of the Lemma 5.10
  of~\cite{ar:fernandez_tori_zuccalli-lagrangian_reduction_of_discrete_mechanical_systems_by_stages}
  with $\ti{p_{34}}^*(\check{\mathcal{D}})$ instead of
  $p_{3}^*(T\ti{G}_E)$ and taking into account the $G$-invariance of
  $\mathcal{D}$.
\end{proof}

\begin{definition}
  Let $G$ be a symmetry group of
  $\mathcal{M} = (E,L_d,\mathcal{D}_d,\mathcal{D},\mathcal{P})
  \in\Ob_{\DLDPSC}$ and $\DC$ be a discrete connection on the
  principal bundle $\pi^{M,G}:M\rightarrow M/G$. The system
  $(\ti{G}_E, \check{L}_d, \check{\mathcal{D}}_d, \check{\mathcal{D}},
  \check{\mathcal{P}}) \in \Ob_{\DLDPSC}$ defined previously is called
  the \jdef{reduced discrete Lagrange--D'Alembert--Poincar\'e system}
  obtained as the reduction of $\mathcal{M}$ by the symmetry group $G$
  using the discrete connection $\DC$. We denote this system by
  $\mathcal{M}/G$ or $\mathcal{M}/(G,\DC)$.
\end{definition}

\begin{example}
  Given a discrete nonholonomic mechanical system
  $(Q,L_d,\mathcal{D}_d,\mathcal{D}^{nh})$ let
  $\mathcal{M} := (Q,L_d,\mathcal{D}_d,\mathcal{D},0)$ be the discrete
  Lagrange--D'Alembert--Poincar\'e system constructed in
  Example~\ref{ex:DNHMS_as_DLDPS}. Let $G$ be a symmetry group of
  $(Q,L_d,\mathcal{D}_d,\mathcal{D}^{nh})$. As noted in
  Remark~\ref{remark:G_symmetry_DNHMS}, $G$ is a symmetry group of
  $\mathcal{M}$. Let $\DC$ be a discrete connection on the principal
  $G$-bundle $\pi^{Q,G}:Q\rightarrow Q/G$. The system
  $\mathcal{M}/(G,\DC)$ is
  $(\ti{G}_E, \check{L}_d, \check{\mathcal{D}}_d, \check{\mathcal{D}},
  \check{\mathcal{P}})$ where the fiber bundle
  $\phi:\ti{G}_E \rightarrow M/G$ is $p^{Q/G}:\ti{G}\rightarrow Q/G$,
  the lagrangian is determined by
  $\check{L}_d \circ \Upsilon_{\DC}=L_d$ and,
  by~\eqref{eq:P_check-def},
  \begin{equation*}
    \check{\mathcal{P}}((v_{k-1},r_k),(v_k,r_{k+1}))(\delta v_k,0) =
    D_2(p_1 \circ \Upsilon_{\DC})(q_{k-1},q_k)(\delta q_k),
  \end{equation*}
  where we have $(v_{k-1},r_k)=\Upsilon_{\DC}(q_{k-1},q_k)$,
  $(v_k,r_{k+1})=\Upsilon_{\DC}(q_k,q_{k+1})$ and
  $(\delta v_k,0)=d \Upsilon_{\DC}(q_k,q_{k+1})(\delta q_k,0)$, with
  $\delta q_k \in \mathcal{D}^{nh}_{q_k}$. This DLDPS coincides with
  the one obtained in
  Section~\ref{sec:nonholonomic_discrete_mechanical_systems_with_symmetry}
  as associated to the reduction of
  $(Q,L_d,\mathcal{D}_d,\mathcal{D}^{nh})$ modulo $G$ in the sense
  of~\cite{ar:fernandez_tori_zuccalli-lagrangian_reduction_of_discrete_mechanical_systems}.
  Thus, the reduction process of DLDPSs extends the reduction
  construction of discrete nonholonomic mechanical systems introduced
  in~\cite{ar:fernandez_tori_zuccalli-lagrangian_reduction_of_discrete_mechanical_systems}.
\end{example}

\begin{proposition}\label{prop:Upsilon_morphism}
  Let $G$ be a symmetry group of
  $\mathcal{M} = (E,L_d,\mathcal{D}_d,\mathcal{D},\mathcal{P})\in
  \Ob_{\DLDPSC}$ and $\DC$ a discrete connection on the principal
  $G$-bundle $\pi ^{M,G}:M\rightarrow M/G$. Then,
  $\Upsilon_{\DC}\in \Mor_{\DLDPSC}(\mathcal{M},\mathcal{M}/(G,\DC))$,
  where $\Upsilon_{\DC}$ is the map defined by
  \eqref{eq:Upsilon_DC-def}.
\end{proposition}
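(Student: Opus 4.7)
The plan is to verify, one by one, that $\Upsilon_{\DC}$ satisfies the seven axioms in Definition~\ref{def:category}; most of them hold essentially by construction, so the task is really a bookkeeping exercise that collects the lemmas already proved in Section~\ref{subsec:Reduced_DLP_system}.

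First I would handle the ``soft'' items. Condition~\ref{it:prop_morphism_1} follows immediately from Lemma~\ref{le:Upsilon_fiber_bundle}: as a principal $G$-bundle, $\Upsilon_{\DC}$ is in particular a surjective submersion. Condition~\ref{it:prop_morphism_2} follows from reading off the second component of $\Upsilon_{\DC}(\epsilon,m)=(\pi^{E\times G,G}(\epsilon,\DC(\phi(\epsilon),m)),\pi^{M,G}(m))$: we have $p_2\circ \Upsilon_{\DC}=\pi^{M,G}\circ p_2$, which does not depend on the $E$-coordinate, so its derivative in the $E$-direction vanishes. Condition~\ref{it:prop_morphism_4} is just the definition of $\check{L}_d$, which was constructed precisely so that $\check{L}_d\circ \Upsilon_{\DC}=L_d$ (well-definedness of $\check{L}_d$ is guaranteed by the $G$-invariance of $L_d$ together with the fact that $\Upsilon_{\DC}$ is the quotient map for the $G$-action on $C'(E)$).

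Next I would address the compatibility condition~\ref{it:prop_morphism_3}. Here the fiber bundle on the target side is $p^{M/G}:\ti{G}_E\rightarrow M/G$, so the right-hand side of~\eqref{eq:prop_morphism_3}, evaluated at $((\epsilon_0,m_1),(\epsilon_1,m_2))\in C''(E)$, is $p^{M/G}(p_1(\Upsilon_{\DC}(\epsilon_1,m_2)))=\pi^{M,G}(\phi(\epsilon_1))$, while the left-hand side is $p_2(\Upsilon_{\DC}(\epsilon_0,m_1))=\pi^{M,G}(m_1)$. By the definition of $C''(E)$ as the fiber product with respect to $p_2$ and $\phi\circ p_1$, we have $m_1=\phi(\epsilon_1)$, which gives the equality.

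Finally I would dispatch the ``constraint'' conditions, which are the ones that really encode the construction of the reduced system. Condition~\ref{it:prop_morphism_5}, $\check{\mathcal{D}}_d=\Upsilon_{\DC}(\mathcal{D}_d)$, is exactly Lemma~\ref{le:check_D_submanifold}; condition~\ref{it:prop_morphism_6}, $\check{\mathcal{D}}=d\Upsilon_{\DC}(\mathcal{D})$, is exactly the construction in Lemma~\ref{le:check_D_subbundle}; and condition~\ref{it:prop_morphism_7}, which demands
\begin{equation*}
  \check{\mathcal{P}}(\Upsilon_{\DC}^{(2)}((\epsilon_0,m_1),(\epsilon_1,m_2)))(d\Upsilon_{\DC}(\epsilon_1,m_2)(\delta\epsilon_1,0))
  = d(p_1\circ \Upsilon_{\DC})(\epsilon_0,m_1)(\mathcal{P}(\cdots)(\delta \epsilon_1,0),d\phi(\epsilon_1)(\delta \epsilon_1)),
\end{equation*}
is exactly the formula~\eqref{eq:P_check-def} used to define $\check{\mathcal{P}}$, whose well-definedness is Lemma~\ref{le:P_reduced_is_well_defined}. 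The only real content to check here is that picking different preimages $((\epsilon_0,m_1),(\epsilon_1,m_2))\in(\Upsilon_{\DC}^{(2)})^{-1}((v_0,r_1),(v_1,r_2))$ gives the same value on the right-hand side; this is precisely the $G$-equivariance argument behind Lemma~\ref{le:P_reduced_is_well_defined}, which is the only nontrivial step in the whole argument.
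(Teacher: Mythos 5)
Your proposal is correct and follows essentially the same route as the paper: a condition-by-condition verification in which \ref{it:prop_morphism_5} and \ref{it:prop_morphism_6} hold by the very construction of $\check{\mathcal{D}}_d$ and $\check{\mathcal{D}}$, and \ref{it:prop_morphism_7} reduces to the defining formula~\eqref{eq:P_check-def} together with the well-definedness established in Lemma~\ref{le:P_reduced_is_well_defined}. The only difference is that the paper defers the checks of conditions \ref{it:prop_morphism_1}--\ref{it:prop_morphism_4} and \ref{it:prop_morphism_7} to Proposition 5.13 of the earlier reference, whereas you spell them out explicitly; your details are accurate.
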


\begin{proof}
  The proof that $\Upsilon_{\DC}$ satisfies the
  conditions~\ref{it:prop_morphism_1} to~\ref{it:prop_morphism_4}
  and~\ref{it:prop_morphism_7} of Definition~\ref{def:category} is
  analogous to the proof of Proposition 5.13
  in~\cite{ar:fernandez_tori_zuccalli-lagrangian_reduction_of_discrete_mechanical_systems_by_stages},
  with $\ti{p_{34}}^*(\mathcal{D})$ instead of $p_{3}^{*}TE$.  Since
  the constraint spaces of the system $\mathcal{M}/(G,\DC)$ have been
  defined as $\check{\mathcal{D}}_d := \Upsilon_{\DC}(\mathcal{D}_d)$
  and $\check{\mathcal{D}} := d\Upsilon_{\DC}(\mathcal{D})$
  conditions~\ref{it:prop_morphism_5} and~\ref{it:prop_morphism_6} of
  Definition~\ref{def:category} hold.
\end{proof}

The following result proves that given a discrete
Lagrange--D'Alembert--Poincar\'e system with symmetry, the reduced
systems obtained using different discrete connections are all
isomorphic in $\DLDPSC$.

\begin{proposition}\label{prop:reduction_with_different_DC_are_isomorphic}
  Let $G$ be a symmetry group of
  $\mathcal{M} = (E,L_d,\mathcal{D}_d,\mathcal{D},\mathcal{P})
  \in\Ob_{\DLDPSC}$ and $\DCp{1}$, $\DCp{2}$ be two discrete connections
  on the principal $G$-bundle $\pi^{M,G}:M\rightarrow M/G$. Then, the
  reduced systems $\mathcal{M}/(G,\DCp{1})$ and
  $\mathcal{M}/(G,\DCp{2})$ are isomorphic in $\DLDPSC$.
\end{proposition}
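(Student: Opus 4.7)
The plan is to realize the isomorphism as the comparison of the two reduction morphisms, and then invoke Lemma~\ref{le:morphism} to transfer everything to the categorical level at no extra cost. By Proposition~\ref{prop:Upsilon_morphism}, I have
$\Upsilon_{\DCp{1}}\in \Mor_{\DLDPSC}(\mathcal{M},\mathcal{M}/(G,\DCp{1}))$ and $\Upsilon_{\DCp{2}}\in \Mor_{\DLDPSC}(\mathcal{M},\mathcal{M}/(G,\DCp{2}))$, both defined on the same space $C'(E)=E\times M$, and both with codomain $C'(\ti{G}_E)=\ti{G}_E\times (M/G)$ (the construction of $\ti{G}_E$ and $M/G$ depends only on the $G$-actions, not on the chosen connection).

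Using diagram~\eqref{eq:diagram_ExM_to_reduced} applied to each $\DCp{i}$, I can factor $\Upsilon_{\DCp{i}}=\Phi_{\DCp{i}}\circ \pi^{E\times M,G}$, where $\Phi_{\DCp{i}}:(E\times M)/G\rightarrow \ti{G}_E\times(M/G)$ is the diffeomorphism supplied by Proposition~\ref{prop:equivariant_diffeomorphisms}. Set
\begin{equation*}
  F := \Phi_{\DCp{2}}\circ \Phi_{\DCp{1}}^{-1} : C'(\ti{G}_E)\longrightarrow C'(\ti{G}_E).
\end{equation*}
By construction $F$ is a diffeomorphism and the triangle
\begin{equation*}
  \xymatrix{ & {C'(E)} \ar[dl]_{\Upsilon_{\DCp{1}}}
    \ar[dr]^{\Upsilon_{\DCp{2}}} & \\
    {C'(\ti{G}_E)} \ar[rr]_{F} & & {C'(\ti{G}_E)} }
\end{equation*}
commutes, since both $\Upsilon_{\DCp{i}}$ have the common factor $\pi^{E\times M,G}$.

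At this point Lemma~\ref{le:morphism}, applied with $\mathcal{M}':=\mathcal{M}/(G,\DCp{1})$ and $\mathcal{M}'':=\mathcal{M}/(G,\DCp{2})$, yields that $F\in \Mor_{\DLDPSC}(\mathcal{M}/(G,\DCp{1}),\mathcal{M}/(G,\DCp{2}))$, and because $F$ is a diffeomorphism, the last clause of that lemma promotes $F$ to an isomorphism in $\DLDPSC$.

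The argument is essentially a bookkeeping exercise once one identifies the correct candidate for the isomorphism; no genuine obstacle arises, because all the nontrivial content (the $G$-equivariance of $L_d$, $\mathcal{D}_d$, $\mathcal{D}$ and $\mathcal{P}$, the compatibility conditions in Definition~\ref{def:category}, the smoothness of the induced maps) has already been absorbed into Proposition~\ref{prop:Upsilon_morphism} and Lemma~\ref{le:morphism}. The only point that deserves brief verification is that $\Phi_{\DCp{1}}$ and $\Phi_{\DCp{2}}$ have the same domain $(E\times M)/G$, which is manifest from~\eqref{eq:diagram_ExM_to_reduced}.
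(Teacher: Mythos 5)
Your proof is correct and follows essentially the same route as the paper's: the paper's proof also constructs the comparison map on the common quotient and concludes via Proposition~\ref{prop:Upsilon_morphism} and Lemma~\ref{le:morphism}. Your explicit factorization $\Upsilon_{\DCp{i}}=\Phi_{\DCp{i}}\circ\pi^{E\times M,G}$ via Proposition~\ref{prop:equivariant_diffeomorphisms}, rather than invoking Lemma~\ref{le:Upsilon_fiber_bundle} to descend $\Upsilon_{\DCp{2}}$ through the principal bundle $\Upsilon_{\DCp{1}}$, is an immaterial variation of the same argument.
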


\begin{proof}
  The proof is analogous to the proof of Proposition 5.14
  in~\cite{ar:fernandez_tori_zuccalli-lagrangian_reduction_of_discrete_mechanical_systems_by_stages},
  using Lemmas~\ref{le:Upsilon_fiber_bundle} and~\ref{le:morphism} and
  Proposition~\ref{prop:Upsilon_morphism}.
\end{proof}


\subsection{Dynamics of the reduced discrete
  Lagrange--D'Alembert--Poincar\'e system}
\label{sec:dynamics_of_the_reduced_DLDPS}

In this section we consider the dynamics of the reduced system defined
in Section~\ref{subsec:Reduced_DLP_system}.

\begin{theorem}\label{thm:equivalent_trajectories}
  Let $G$ be a symmetry group of
  $\mathcal{M} = (E,L_d,\mathcal{D}_d,\mathcal{D},\mathcal{P})$, $\DC$
  be a discrete connection on the principal $G$-bundle
  $\pi^{M,G}:M\rightarrow M/G$ and
  $\mathcal{M}/(G,\DC) = (\ti{G}_E,\check{L}_d, \check{\mathcal{D}}_d,
  \check{\mathcal{D}}, \check{\mathcal{P}})\in\Ob_{\DLDPSC}$ be the
  corresponding reduced DLDPS. Assume that
  $(\epsilon_{\cdot},m_{\cdot}) =
  ((\epsilon_0,m_1),\ldots,(\epsilon_{N-1},m_N))$ is a discrete path
  in $C'(E)$, and
  $(v_\cdot, r_\cdot) = ((v_0,r_1),\ldots,(v_{N-1},r_N))$ is a
  discrete path in $C'(\ti{G}_E)$ such that
  $\Upsilon_{\DC}(\epsilon_k,m_{k+1}) = (v_k,r_{k+1})$ for
  $k=0,\ldots,N-1$. Then, $(\epsilon_{\cdot},m_{\cdot})$ is a
  trajectory of $\mathcal{M}$ if and only if $(v_{\cdot},r_{\cdot})$
  is a trajectory of $\mathcal{M}/(G,\DC)$.
\end{theorem}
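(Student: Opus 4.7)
The plan is to derive this theorem almost entirely from machinery that has already been established earlier in the paper, by recognizing it as a direct application of the categorical framework developed in Section~\ref{sec:categorical_formulation} to the specific morphism supplied by the reduction construction.

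First I would invoke Proposition~\ref{prop:Upsilon_morphism}, which guarantees that $\Upsilon_{\DC}\colon C'(E)\to C'(\ti{G}_E)$ is a morphism in $\DLDPSC$ from $\mathcal{M}$ to $\mathcal{M}/(G,\DC)$. Since by hypothesis $\Upsilon_{\DC}(\epsilon_k,m_{k+1})=(v_k,r_{k+1})$ for every $k$, the discrete path $(v_\cdot,r_\cdot)$ is precisely the image under $\Upsilon_{\DC}$ of the discrete path $(\epsilon_\cdot,m_\cdot)$, so the two discrete paths are related exactly as in the hypothesis of Theorem~\ref{thm:imp_traject_categ}.

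For the forward implication, I would apply point~\ref{it:imp_traject_categ-M_to_M'} of Theorem~\ref{thm:imp_traject_categ} directly: if $(\epsilon_\cdot,m_\cdot)$ is a trajectory of $\mathcal{M}$, then its image $(v_\cdot,r_\cdot)$ is automatically a trajectory of $\mathcal{M}/(G,\DC)$. For the reverse implication, I need to verify the additional hypothesis of point~\ref{it:imp_traject_categ-M'_to_M}, namely that $\mathcal{D}_d=\Upsilon_{\DC}^{-1}(\check{\mathcal{D}}_d)$. But this equality was established in Lemma~\ref{le:check_D_submanifold} as a consequence of the $G$-invariance of $\mathcal{D}_d$ together with the fact that $\Upsilon_{\DC}$ is a principal $G$-bundle (Lemma~\ref{le:Upsilon_fiber_bundle}). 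With this equality in hand, point~\ref{it:imp_traject_categ-M'_to_M} of Theorem~\ref{thm:imp_traject_categ} immediately yields that $(\epsilon_\cdot,m_\cdot)$ is a trajectory of $\mathcal{M}$ whenever $(v_\cdot,r_\cdot)$ is a trajectory of $\mathcal{M}/(G,\DC)$.

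There is essentially no hard obstacle in this proof, because all of the substantive work has already been done elsewhere: verifying that $\Upsilon_{\DC}$ satisfies the seven conditions of Definition~\ref{def:category} was carried out in Proposition~\ref{prop:Upsilon_morphism}, the lifting of fixed-endpoint variations through a morphism was handled inside the proof of Theorem~\ref{thm:imp_traject_categ}, and the equality $\mathcal{D}_d=\Upsilon_{\DC}^{-1}(\check{\mathcal{D}}_d)$ is Lemma~\ref{le:check_D_submanifold}. The only mild subtlety worth flagging explicitly is that one must observe at the outset that $(v_\cdot,r_\cdot)$ really is a discrete path in $C'(\ti{G}_E)$, i.e.\ that consecutive pairs lie in $C''(\ti{G}_E)$; this follows from the compatibility condition~\eqref{eq:prop_morphism_3} applied to $\Upsilon_{\DC}$, exactly as in the opening paragraph of the proof of Theorem~\ref{thm:imp_traject_categ}. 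Accordingly the entire proof reduces to a short citation of Proposition~\ref{prop:Upsilon_morphism}, Lemma~\ref{le:check_D_submanifold}, and both parts of Theorem~\ref{thm:imp_traject_categ}.
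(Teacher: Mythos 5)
Your proposal is correct and coincides with the paper's own proof, which likewise cites Proposition~\ref{prop:Upsilon_morphism} for the fact that $\Upsilon_{\DC}$ is a morphism, Lemma~\ref{le:check_D_submanifold} for the equality $\mathcal{D}_d=\Upsilon_{\DC}^{-1}(\check{\mathcal{D}}_d)$, and then concludes by Theorem~\ref{thm:imp_traject_categ}. The only difference is that you spell out a few of the intermediate observations (e.g.\ that $(v_\cdot,r_\cdot)$ is a discrete path) that the paper leaves implicit.
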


\begin{proof}
  By Proposition~\ref{prop:Upsilon_morphism},
  $\Upsilon_{\DC}\in\Mor_{\DLDPSC}(\mathcal{M},\mathcal{M}/(G,\DC))$
  and, by Lemma~\ref{le:check_D_submanifold},
  $\mathcal{D}_d = \Upsilon_{\DC}^{-1}(\check{\mathcal{D}}_d)$.  Then
  the result follows from Theorem~\ref{thm:imp_traject_categ}.
\end{proof}

\begin{corollary}\label{cor:4_pts}
  Let $G$ be a symmetry group of
  $\mathcal{M}=(E,L_d,\mathcal{D}_d,\mathcal{D},\mathcal{P})
  \in\Ob_{\DLDPSC}$, and $\DC$ be a discrete connection on the
  principal $G$-bundle $\pi^{M,G}:M\rightarrow M/G$. For the discrete
  path $(\epsilon_{\cdot},m_{\cdot})$ in $C'(E)$ we define a discrete
  path $(v_{\cdot},r_{\cdot})$ in $C'(\ti{G}_E)$ as
  $(v_k,r_{k+1}):=\Upsilon_{\DC}(\epsilon_k,m_{k+1})$ for
  $k=0,\ldots,N-1$. Then, the following statements are equivalent.
  \begin{enumerate}
  \item \label{it:4_pts-var} $(\epsilon_{\cdot},m_{\cdot})$ is a
    traejctory of the system $\mathcal{M}$.
  \item \label{it:4_pts-eq} Condition~\eqref{eq:evolution} is
    satisfied for $\nu_d:=\nu_d^\mathcal{M}$ defined
    by~\eqref{eq:section_of_motion-def} for $\mathcal{M}$.
  \item \label{it:4_pts-var_red} $(v_{\cdot},r_{\cdot})$ is a
    trajectory of the system $\mathcal{M}/(G,\DC)$.
  \item \label{it:4_pts-eq_red} Condition~\eqref{eq:evolution} is
    satisfied for $\nu_d:=\nu_d^{\mathcal{M}/(G,\DC)}$ and
    $\mathcal{D}_d:=\check{\mathcal{D}}_d$ being those of
      $\mathcal{M}/(G,\DC)$.
  \end{enumerate}
\end{corollary}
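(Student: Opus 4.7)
The plan is to observe that this corollary is essentially an assembly of results already established. I would arrange the four-way equivalence as a cycle passing through the natural pairings: (1) $\Leftrightarrow$ (2), (3) $\Leftrightarrow$ (4), and (1) $\Leftrightarrow$ (3).

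First, the equivalence (1) $\Leftrightarrow$ (2) is a direct application of Proposition~\ref{prop:dynamics_DLDPS} to the system $\mathcal{M}$, which characterizes trajectories of any DLDPS as discrete paths lying in the kinematic constraint $\mathcal{D}_d$ and satisfying the vanishing of $\nu_d^{\mathcal{M}}$ at every interior pair. Likewise, (3) $\Leftrightarrow$ (4) is the same proposition applied to the reduced system $\mathcal{M}/(G,\DC)$, using $\check{\mathcal{D}}_d$ and $\nu_d^{\mathcal{M}/(G,\DC)}$.

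For the remaining link (1) $\Leftrightarrow$ (3), I would invoke Theorem~\ref{thm:equivalent_trajectories} directly: since $(v_k,r_{k+1}) = \Upsilon_{\DC}(\epsilon_k,m_{k+1})$ by construction, and since $\Upsilon_{\DC}\in \Mor_{\DLDPSC}(\mathcal{M},\mathcal{M}/(G,\DC))$ with $\mathcal{D}_d = \Upsilon_{\DC}^{-1}(\check{\mathcal{D}}_d)$ (Proposition~\ref{prop:Upsilon_morphism} and Lemma~\ref{le:check_D_submanifold}), the theorem gives the trajectory equivalence in both directions.

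There is essentially no obstacle here; the statement is a bookkeeping corollary that packages the variational characterization (Proposition~\ref{prop:dynamics_DLDPS}) together with the reduction-preserves-trajectories result (Theorem~\ref{thm:equivalent_trajectories}). The only minor care needed is to note that the existence of the lift $(\epsilon_\cdot,m_\cdot)$ of any $(v_\cdot,r_\cdot)$ is guaranteed (up to initial-point choice) by Proposition~\ref{prop:path_relation}, so the hypothesis relating the two paths is not vacuous when starting from either side; but since the statement begins from a fixed pair of compatible paths, this observation is not even strictly required for the proof itself.
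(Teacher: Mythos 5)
Your proposal is correct and matches the paper's own proof essentially verbatim: the equivalences \ref{it:4_pts-var}$\Leftrightarrow$\ref{it:4_pts-eq} and \ref{it:4_pts-var_red}$\Leftrightarrow$\ref{it:4_pts-eq_red} come from Proposition~\ref{prop:dynamics_DLDPS} applied to $\mathcal{M}$ and to $\mathcal{M}/(G,\DC)$ respectively, and \ref{it:4_pts-var}$\Leftrightarrow$\ref{it:4_pts-var_red} comes from Theorem~\ref{thm:equivalent_trajectories}. Your closing remark about Proposition~\ref{prop:path_relation} is, as you note yourself, not needed since the statement starts from a fixed pair of compatible paths.
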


\begin{proof}
  The equivalence~\ref{it:4_pts-var} \iff~\ref{it:4_pts-eq} was
  demonstrated in Proposition~\ref{prop:dynamics_DLDPS}.  The
  equivalence ~\ref{it:4_pts-var_red} \iff~\ref{it:4_pts-eq_red}
  follows from Proposition~\ref{prop:dynamics_DLDPS} applied to the
  system $\mathcal{M}/(G,\DC)$. The equivalence~\ref{it:4_pts-var}
  \iff~\ref{it:4_pts-var_red} follows from
  Theorem~\ref{thm:equivalent_trajectories}.
\end{proof}

\begin{theorem}\label{thm:reconstruction}
  Let $G$ be a symmetry group of
  $\mathcal{M} =
  (E,L_d,\mathcal{D}_d,\mathcal{D},\mathcal{P})\in\Ob_{\DLDPSC}$ and
  $\DC$ be a discrete connection on the principal $G$-bundle
  $\pi^{M,G}:M\rightarrow M/G$. Let $(v_{\cdot},r_{\cdot})$ be a
  trajectory of the system $\mathcal{M}/(G,\DC)$ and
  $(\ti{\epsilon}_0,\ti{m}_1)\in \mathcal{D}_d$ such that
  $\Upsilon_{\DC}(\ti{\epsilon}_0,\ti{m}_1)=(v_0,r_1)$. Then, there
  exists a unique trajectory $(\epsilon_{\cdot},m_{\cdot})$ of
  $\mathcal{M}$ such that
  $(\epsilon_0,m_1) = (\ti{\epsilon}_0,\ti{m}_1)$ and
  $\Upsilon_{\DC}(\epsilon_k,m_{k+1})=(v_k,r_{k+1})$ for all $k$.
\end{theorem}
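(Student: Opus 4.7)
The plan is to combine the lifting statement for discrete paths (Proposition~\ref{prop:path_relation}) with the equivalence of trajectories under $\Upsilon_{\DC}$ (Theorem~\ref{thm:equivalent_trajectories}). There is essentially no hard work to do; the theorem is the natural ``reconstruction'' counterpart to the reduction equivalence already proved.

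First, I would apply Proposition~\ref{prop:path_relation} to the given discrete path $(v_\cdot, r_\cdot)$ in $C'(\widetilde{G}_E)$ and the initial point $(\widetilde{\epsilon}_0, \widetilde{m}_1)\in C'(E)$, which satisfies $\Upsilon_{\DC}(\widetilde{\epsilon}_0, \widetilde{m}_1) = (v_0, r_1)$ by hypothesis. This produces a unique discrete path $(\epsilon_\cdot, m_\cdot)$ in $C'(E)$ with $(\epsilon_0, m_1) = (\widetilde{\epsilon}_0, \widetilde{m}_1)$ and $\Upsilon_{\DC}(\epsilon_k, m_{k+1}) = (v_k, r_{k+1})$ for every $k=0,\dots,N-1$.

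Next, I would use Theorem~\ref{thm:equivalent_trajectories}: since the discrete paths $(\epsilon_\cdot, m_\cdot)$ and $(v_\cdot, r_\cdot)$ are related by $\Upsilon_{\DC}$ in the precise sense required by that theorem, and since $(v_\cdot, r_\cdot)$ is by assumption a trajectory of $\mathcal{M}/(G,\DC)$, we conclude that $(\epsilon_\cdot, m_\cdot)$ is a trajectory of $\mathcal{M}$. This is the key step, as it transfers the variational criticality from the reduced path to the lifted one. Note that we do not need to separately verify $(\epsilon_k, m_{k+1})\in \mathcal{D}_d$, since Lemma~\ref{le:check_D_submanifold} gives $\mathcal{D}_d = \Upsilon_{\DC}^{-1}(\check{\mathcal{D}}_d)$, and $(v_k,r_{k+1})\in \check{\mathcal{D}}_d$ holds because $(v_\cdot, r_\cdot)$ is a trajectory of $\mathcal{M}/(G,\DC)$.

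Finally, uniqueness of the lifted trajectory is immediate: any trajectory $(\epsilon'_\cdot, m'_\cdot)$ of $\mathcal{M}$ satisfying the prescribed initial condition and $\Upsilon_{\DC}(\epsilon'_k, m'_{k+1}) = (v_k, r_{k+1})$ is in particular a discrete path in $C'(E)$ with those properties, and Proposition~\ref{prop:path_relation} asserts that such a discrete path is unique. Since there is no genuine obstacle here, the main thing to be careful about is simply citing the kinematic constraint equality $\mathcal{D}_d = \Upsilon_{\DC}^{-1}(\check{\mathcal{D}}_d)$ that makes Theorem~\ref{thm:imp_traject_categ}\ref{it:imp_traject_categ-M'_to_M} (inside Theorem~\ref{thm:equivalent_trajectories}) applicable.
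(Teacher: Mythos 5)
Your proposal is correct and follows exactly the paper's own argument: lift the path via Proposition~\ref{prop:path_relation}, transfer the trajectory property via Theorem~\ref{thm:equivalent_trajectories}, and get uniqueness from the uniqueness of the lift. The extra remarks on $\mathcal{D}_d = \Upsilon_{\DC}^{-1}(\check{\mathcal{D}}_d)$ and on uniqueness are consistent with (and slightly more explicit than) what the paper writes.
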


\begin{proof}
  By Proposition \ref{prop:path_relation}, the discrete path
  $(v_{\cdot},r_{\cdot})$ lifts to a unique discrete path
  $(\epsilon_{\cdot},m_{\cdot})$ in $C'(E)$ starting at
  $(\ti{\epsilon}_0,\ti{m}_1)$. Then,
  $(\epsilon_0,m_1) = (\ti{\epsilon}_0,\ti{m}_1)$ and
  $(v_k,r_{k+1})=\Upsilon_{\DC}(\epsilon_k,m_{k+1})$ for all $k$. As
  $(v_{\cdot},r_{\cdot})$ is a trajectory of $\mathcal{M}/(G,\DC)$, by
  Theorem~\ref{thm:equivalent_trajectories},
  $(\epsilon_{\cdot},m_{\cdot})$ is a trajectory of $\mathcal{M}$.
\end{proof}

\begin{remark}
  Theorem~\ref{thm:reconstruction} states that all trajectories of a
  reduced discrete Lagrange--D'Alembert--Poincar\'e system
  $\mathcal{M}/(G,\DC)$ come from trajectories of the original system
  $\mathcal{M}$. A direct description of the reconstruction process in
  terms of the lifting of discrete paths is given in Remark 5.18
  of~\cite{ar:fernandez_tori_zuccalli-lagrangian_reduction_of_discrete_mechanical_systems_by_stages}. In
  that respect, it should be kept in mind that, as
  $\mathcal{D}_d = \Upsilon_{\DC}^{-1}(\check{\mathcal{D}}_d)$ and the
  trajectories of $\mathcal{M}/(G,\DC)$ are in
  $\check{\mathcal{D}}_d$, the lifted discrete paths are in
  $\mathcal{D}_d$ automatically.
\end{remark}

Next we study the relationship between the equation of motion of a
symmetric DLDPS and that of its reduction. Before we can state the
result, we recall the pullback construction for sections of vector
bundles. If $\rho_j:\mathcal{V}_j\rightarrow X_j$ for $j=1,2$ are
smooth vector bundles and $F:\mathcal{V}_1\rightarrow \mathcal{V}_2$
is a morphism of vector bundles over $f:X_1\rightarrow X_2$, there is
a pullback map
$F^*:\Gamma(X_2,\mathcal{V}_2^*)\rightarrow
\Gamma(X_1,\mathcal{V}_1^*)$ determined by
$F^*(\alpha_2)(x_1)(v_1) := \alpha_2(f(x_1))(F(v_1))$, for
$\alpha_2\in \Gamma(X_2,\mathcal{V}_2^*)$, $x_1\in X_1$ and
$v_1\in (\mathcal{V}_1)_{x_1}$.

Let $G$ be a symmetry group of
$\mathcal{M} = (E,L_d,\mathcal{D}_d,\mathcal{D},\mathcal{P}) \in
\Ob_{\DLDPSC}$ and $\DC$ be a discrete connection on the principal
$G$-bundle $\pi^{M,G}:M\rightarrow M/G$. Let $\mathcal{M}/(G,\DC)$ be
the corresponding reduced system. We have the vector bundles
$\ti{p_{34}}^*(\mathcal{D})\rightarrow C''(E)$ and
$\ti{p_{34}}^*(\check{\mathcal{D}})\rightarrow C''(\ti{G}_E)$. Then,
as $d\Upsilon_{\DC}^{(2)}:TC''(E)\rightarrow TC''(\ti{G}_E)$ is a
morphism of vector bundles (over
$\Upsilon_{\DC}^{(2)}:C''(E)\rightarrow C''(\ti{G}_E)$) that restricts
to a morphism
$d\Upsilon_{\DC}^{(2)}|_{\ti{p_{34}}^*(\mathcal{D})}:
\ti{p_{34}}^*(\mathcal{D})\rightarrow
\ti{p_{34}}^*(\check{\mathcal{D}})$, we have the pullback map
$(d\Upsilon_{\DC}^{(2)}|_{\ti{p_{34}}^*(\mathcal{D})})^* :
\Gamma(C''(\ti{G}_E),\ti{p_{34}}^*(\check{\mathcal{D}})^*)\rightarrow
\Gamma(C''(E),\ti{p_{34}}^*(\mathcal{D})^*)$. The following result
relates the equation of motion of $\mathcal{M}$,
$\nu_d^{\mathcal{M}}\in \Gamma(C''(E),\ti{p_{34}}^*(\mathcal{D})^*)$,
to the one of $\mathcal{M}/(G,\DC)$,
$\nu_d^{\mathcal{M}/G}\in
\Gamma(C''(\ti{G}_E),\ti{p_{34}}^*(\check{\mathcal{D}})^*)$.

\begin{lemma}
  With the previous notation,
  $\nu_d^{\mathcal{M}} =
  (d\Upsilon_{\DC}^{(2)}|_{\ti{p_{34}}^*(\mathcal{D})})^*(\nu_d^{\mathcal{M}/G})$.
\end{lemma}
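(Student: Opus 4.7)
The plan is to verify the identity fiberwise. Fix $((\epsilon_0,m_1),(\epsilon_1,m_2))\in C''(E)$ and a test vector $(\ti{\delta \epsilon_1},0)\in \mathcal{D}_{(\epsilon_1,m_2)}$. Set $(v_k,r_{k+1}):=\Upsilon_{\DC}(\epsilon_k,m_{k+1})$ and $(\ti{\delta v_1},0):=d\Upsilon_{\DC}(\epsilon_1,m_2)(\ti{\delta \epsilon_1},0)$; the second coordinate indeed vanishes, because $\Upsilon_{\DC}\in \Mor_{\DLDPSC}(\mathcal{M},\mathcal{M}/G)$ by Proposition~\ref{prop:Upsilon_morphism}, so point~\ref{it:prop_morphism_2} of Definition~\ref{def:category} gives $D_1(p_2\circ \Upsilon_{\DC})=0$. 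The identity to establish then reads $\nu_d^{\mathcal{M}}((\epsilon_0,m_1),(\epsilon_1,m_2))(\ti{\delta\epsilon_1},0) = \nu_d^{\mathcal{M}/G}((v_0,r_1),(v_1,r_2))(\ti{\delta v_1},0)$.

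Regrouping the three summands of~\eqref{eq:section_of_motion-def} into two pieces yields the compact form $\nu_d^{\mathcal{M}}((\epsilon_0,m_1),(\epsilon_1,m_2))(\ti{\delta\epsilon_1},0) = dL_d(\epsilon_1,m_2)(\ti{\delta\epsilon_1},0) + dL_d(\epsilon_0,m_1)\bigl(\mathcal{P}((\epsilon_0,m_1),(\epsilon_1,m_2))(\ti{\delta\epsilon_1},0),\, d\phi(\epsilon_1)(\ti{\delta\epsilon_1})\bigr)$, together with the analogous expression for $\nu_d^{\mathcal{M}/G}$ at $(\ti{\delta v_1},0)$. Since $\check{L}_d\circ \Upsilon_{\DC}=L_d$, the chain rule gives $d\check{L}_d(v_1,r_2)(\ti{\delta v_1},0)=dL_d(\epsilon_1,m_2)(\ti{\delta\epsilon_1},0)$, matching the first term on each side. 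The second term reduces to the following key claim: the pair $\bigl(\check{\mathcal{P}}((v_0,r_1),(v_1,r_2))(\ti{\delta v_1},0),\, dp^{M/G}(v_1)(\ti{\delta v_1})\bigr)$ equals the image $d\Upsilon_{\DC}(\epsilon_0,m_1)\bigl(\mathcal{P}((\epsilon_0,m_1),(\epsilon_1,m_2))(\ti{\delta\epsilon_1},0),\, d\phi(\epsilon_1)(\ti{\delta\epsilon_1})\bigr)$.

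The $p_1$-component of that claim is precisely the defining formula~\eqref{eq:P_check-def} for $\check{\mathcal{P}}$. For the $p_2$-component, the explicit formula~\eqref{eq:Upsilon_DC-def} yields $p_2\circ \Upsilon_{\DC}=\pi^{M,G}\circ p_2$, while the commutativity of diagram~\eqref{eq:ExM_and_tiGxM/G} yields $p^{M/G}\circ p_1\circ \Upsilon_{\DC}=\pi^{M,G}\circ \phi\circ p_1$; using $m_1=\phi(\epsilon_1)$, which is forced by membership in $C''(E)$, both sides of the $p_2$-component collapse to the common value $d(\pi^{M,G}\circ \phi)(\epsilon_1)(\ti{\delta\epsilon_1})$, confirming the claim. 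Applying $d\check{L}_d(v_0,r_1)$ to both sides of the claim and invoking $\check{L}_d\circ \Upsilon_{\DC}=L_d$ once more matches the second terms and completes the proof. The only genuine computation is the $p_2$-component above; the potential subtlety that $\check{\mathcal{P}}$ depends on a choice of representative in the $G$-orbit is already absorbed into Lemma~\ref{le:P_reduced_is_well_defined}.
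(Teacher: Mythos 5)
Your proof is correct and follows the same route as the paper's: reduce the pullback identity to a fiberwise equality and verify it by direct computation, using $\check{L}_d\circ\Upsilon_{\DC}=L_d$, the defining formula~\eqref{eq:P_check-def} for $\check{\mathcal{P}}$, and the commutativity of diagram~\eqref{eq:ExM_and_tiGxM/G} together with $p_2\circ\Upsilon_{\DC}=\pi^{M,G}\circ p_2$. The paper compresses all of this into the phrase ``a direct computation shows''; your write-up supplies exactly that computation, correctly noting that the representative-independence of $\check{\mathcal{P}}$ is covered by Lemma~\ref{le:P_reduced_is_well_defined}.
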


\begin{proof}
  For any $\mu := ((\epsilon_0,m_1),(\epsilon_1,m_2))\in C''(E)$ and
  $(\delta \epsilon_1,0) \in (\ti{p_{34}}^*(\mathcal{D}))_\mu$,
  we have
  \begin{equation*}
    \begin{split}
      (d\Upsilon_{\DC}^{(2)}|_{\ti{p_{34}}^*(\mathcal{D})})^*
      (\nu_d^{\mathcal{M}/G})(\mu)(\delta \epsilon_1,0) =
      \nu_d^{\mathcal{M}/G}(\Upsilon_{\DC}^{(2)}(\mu))
      (d\Upsilon_{\DC}(\epsilon_1,m_2)(\delta \epsilon_1,0)).
    \end{split}
  \end{equation*}
  Then, a direct computation shows that
  \begin{equation*}
    \nu_d^{\mathcal{M}/G}(\Upsilon_{\DC}^{(2)}(\mu))
    (d\Upsilon_{\DC}(\epsilon_1,m_2)(\delta \epsilon_1,0)) =
    \nu_d^{\mathcal{M}}(\mu)(\delta \epsilon_1,0),
  \end{equation*}
  proving the statement.
\end{proof}

Let
$\mathcal{M} = (E,L_d,\mathcal{D}_d,\mathcal{D},\mathcal{P}) \in
\Ob_{\DLDPSC}$ and $G$ be a symmetry group of $\mathcal{M}$.  Consider
the vertical bundle $\ker(T\pi^{E,G})\subset TE$ over $E$ and let
$\mathcal{V}:=p_1^*\ker(T\pi^{E,G})$, where $p_1:C'(E)\rightarrow E$
is the projection map. Assume that
$\mathcal{S}:=\mathcal{D}\cap \mathcal{V}$ is a vector bundle over
$C'(E)$ and that there is another vector bundle
$\mathcal{H}\rightarrow C'(E)$ such that
$\mathcal{D} = \mathcal{S}\oplus \mathcal{H}$; the vector bundle
$\mathcal{H}$ may be constructed using a (continuous) connection on
the principal bundle $\pi^{E,G}:E\rightarrow E/G$. Then, as the
section of motion $\nu_d^\mathcal{M}$ takes values in
$\ti{p_{34}}^*(\mathcal{D})^* \simeq
\ti{p_{34}}^*(\mathcal{S}^*)\oplus \ti{p_{34}}^*(\mathcal{H}^*)$ we
can decompose it as
$\nu_d^\mathcal{M} = ((\nu_d^\mathcal{M})^\mathcal{S},
(\nu_d^\mathcal{M})^\mathcal{H})$. Clearly, for any $\mu\in C''(E)$,
the condition $\nu_d^\mathcal{M}(\mu) = 0$ is equivalent to
\begin{equation*}
  (\nu_d^\mathcal{M})^\mathcal{S}(\mu) = 0 \stext{ and }
  (\nu_d^\mathcal{M})^\mathcal{H}(\mu)=0.
\end{equation*}
The last two conditions are usually called the vertical and horizontal
equations of motion.

\begin{definition}\label{def:J_d}
  Let
  $\mathcal{M} = (E,L_d,\mathcal{D}_d,\mathcal{D},\mathcal{P}) \in
  \Ob_{\DLDPSC}$ and $G$ be a symmetry group of $\mathcal{M}$.  We
  define the \jdef{discrete nonholonomic momentum map}
  \begin{equation*}
    J_d:C'(E)\rightarrow (\jgg^\mathcal{D})^* \stext{ by }
    J_d(\epsilon_0,m_1)(\xi) :=
    -D_1L_d(\epsilon_0,m_1)(\xi_E(\epsilon_0))
  \end{equation*}
  for $(\epsilon_0,m_1)\in C'(E)$ and
  $((\epsilon_0,m_1),\xi)\in \jgg^\mathcal{D} :=
  \{((\epsilon_0,m_1),\xi)\in C'(E)\times \jgg :
  (\xi_E(\epsilon_0),0)\in \mathcal{D}_{(\epsilon_0,m_1)}\}$, where
  $\jgg:=\lie{G}$; we assume that $\jgg^\mathcal{D}\rightarrow C'(E)$
  is a smooth vector bundle. We also define, for each
  $\conj{\xi}\in \Gamma(\jgg^\mathcal{D})$,
  $(J_d)_{\conj{\xi}}:C'(E)\rightarrow \R$ by
  $(J_d)_{\conj{\xi}}(\epsilon_0,m_1):=
  J_d(\epsilon_0,m_1)(\conj{\xi}(\epsilon_0,m_1))$.
\end{definition}

In the context of Definition~\ref{def:J_d} we have that
$L_d(l^E_g(\epsilon_0),l^M_g(m_1)) = L_d(\epsilon_0,m_1)$ for all
$g\in G$ and $(\epsilon_0,m_1)\in C'(E)$. Then, for each
$((\epsilon_0,m_1),\xi)\in C'(E)\times \jgg$,
\begin{equation*}
  \begin{split}
    0 =& \frac{d}{dt}\bigg|_{t=0} L_d(l^E_{\exp(t\xi)}(\epsilon_0),
    l^M_{\exp(t\xi)}(m_1)) \\=&
    D_1L_d(\epsilon_0,m_1)(\xi_E(\epsilon_0)) +
    D_2L_d(\epsilon_0,m_1)(\xi_M(m_1)),
  \end{split}
\end{equation*}
thus, for $((\epsilon_0,m_1),\xi)\in C'(E)\times \jgg^\mathcal{D}$,
\begin{equation}\label{eq:J_d_in_terms_of_D_2}
  J_d(\epsilon_0,m_1)(\xi) = -D_1L_d(\epsilon_0,m_1)(\xi_E(\epsilon_0)) =
  D_2L_d(\epsilon_0,m_1)(\xi_M(m_1)).
\end{equation}

By Proposition~\ref{prop:dynamics_DLDPS}, if
$(\epsilon_\cdot,m_\cdot)$ is a trajectory of $\mathcal{M}$
then~\eqref{eq:evolution} holds. Let
$\conj{\xi} \in \Gamma(\jgg^\mathcal{D})$, so that, for each
$k=1,\ldots,N-1$,
$\conj{\xi}(\epsilon_k,m_{k+1}) \in
\jgg^\mathcal{D}_{(\epsilon_k,m_{k+1})}$; then, for each such $k$, we
can evaluate the second condition in~\eqref{eq:evolution} at
$(\conj{\xi}(\epsilon_k,m_{k+1}))_E(\epsilon_k),0)\in
\mathcal{S}_{(\epsilon_k,m_{k+1})} \subset
\mathcal{D}_{(\epsilon_k,m_{k+1})}$ to obtain
\begin{equation*}
  \begin{split}
    -D_1L_d(\epsilon_k,&m_{k+1})((\conj{\xi}(\epsilon_k,m_{k+1}))_E(\epsilon_k))
    \\=& D_2L_d(\epsilon_{k-1},m_k) \circ
    \underbrace{d\phi(\epsilon_k)(\conj{\xi}(\epsilon_k,m_{k+1}))_E(\epsilon_k))}_{=(\conj{\xi}(\epsilon_k,m_{k+1}))_M(m_k)}
    \\&+ D_1L_d(\epsilon_{k-1},m_k) \circ
    \mathcal{P}((\epsilon_{k-1},m_k),(\epsilon_k,m_{k+1}))(\conj{\xi}(\epsilon_k,m_{k+1})_E(\epsilon_k),0).
  \end{split}
\end{equation*}
Plugging this last result into~\eqref{eq:J_d_in_terms_of_D_2}, we see
that
\begin{equation}\label{eq:J_d_evolution}
  \begin{split}
    (J_d)_{\conj{\xi}}(&\epsilon_k,m_{k+1}) =
    (J_d)_{\conj{\xi}}(\epsilon_{k-1},m_{k}) \\&+
    D_1L_d(\epsilon_{k-1},m_k) \circ
    \mathcal{P}((\epsilon_{k-1},m_k),(\epsilon_k,m_{k+1}))(\conj{\xi}(\epsilon_k,m_{k+1})_E(\epsilon_k),0)
  \end{split}
\end{equation}
for all $k=1,\ldots,N-1$. The following result completes the previous
discussion.

\begin{proposition}\label{prop:J_d_evolution}
  Let
  $\mathcal{M} = (E,L_d,\mathcal{D}_d,\mathcal{D},\mathcal{P}) \in
  \Ob_{\DLDPSC}$ and $G$ be a symmetry group of $\mathcal{M}$. Also,
  let $(\epsilon_\cdot, m_\cdot)$ be a discrete path in $C'(E)$. The
  following statements are equivalent.
  \begin{enumerate}
  \item \label{it:J_d_evolution-vert_eq} $(\epsilon_\cdot,m_\cdot)$
    satisfies
    $\nu_d^\mathcal{S}((\epsilon_{k-1},m_k),(\epsilon_k,m_{k+1})) = 0$
    for all $k=1,\ldots N-1$.
  \item \label{it:J_d_evolution-J_d} For all sections
    $\conj{\xi}\in \Gamma(\jgg^\mathcal{D})$ and $k=1,\ldots N-1$,
    equation~\eqref{eq:J_d_evolution} is satisfied.
  \end{enumerate}
\end{proposition}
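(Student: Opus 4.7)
The plan is to view the proposition as a biconditional that formalizes the calculation shown immediately before its statement. For the direction \ref{it:J_d_evolution-vert_eq} \imp \ref{it:J_d_evolution-J_d}, I would essentially collect the derivation already displayed: given a section $\conj{\xi}\in\Gamma(\jgg^\mathcal{D})$, by definition of $\jgg^\mathcal{D}$ the test vector $(\conj{\xi}(\epsilon_k,m_{k+1})_E(\epsilon_k),0)$ lies in $\mathcal{S}_{(\epsilon_k,m_{k+1})}\subset\mathcal{D}_{(\epsilon_k,m_{k+1})}$, so the vanishing of $\nu_d^\mathcal{S}$ forces $\nu_d^\mathcal{M}((\epsilon_{k-1},m_k),(\epsilon_k,m_{k+1}))(\conj{\xi}(\epsilon_k,m_{k+1})_E(\epsilon_k),0)=0$. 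Expanding $\nu_d^\mathcal{M}$ via~\eqref{eq:section_of_motion-def}, then using the $G$-equivariance of $\phi$ in the form $d\phi(\epsilon_k)(\xi_E(\epsilon_k))=\xi_M(m_k)$ to rewrite the middle summand, and finally applying~\eqref{eq:J_d_in_terms_of_D_2} to repackage the $D_1L_d$ and $D_2L_d$ terms as values of the momentum map yields~\eqref{eq:J_d_evolution}.

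For the converse \ref{it:J_d_evolution-J_d} \imp \ref{it:J_d_evolution-vert_eq}, the key observation is that the fiber $\mathcal{S}_{(\epsilon_0,m_1)}$ coincides with the image of the linear map $\xi \mapsto (\xi_E(\epsilon_0),0)$ restricted to $\jgg^\mathcal{D}_{(\epsilon_0,m_1)}$, since the $G$-action on $E$ is free and therefore $\xi\mapsto \xi_E(\epsilon_0)$ is an isomorphism from $\jgg$ onto the vertical tangent space at $\epsilon_0$. Consequently, checking $\nu_d^\mathcal{S}((\epsilon_{k-1},m_k),(\epsilon_k,m_{k+1}))=0$ amounts to checking $\nu_d^\mathcal{M}((\epsilon_{k-1},m_k),(\epsilon_k,m_{k+1}))(\xi_E(\epsilon_k),0)=0$ for every $\xi\in\jgg^\mathcal{D}_{(\epsilon_k,m_{k+1})}$. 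Given such a $\xi$, because $\jgg^\mathcal{D}\rightarrow C'(E)$ is assumed smooth one can produce a smooth section $\conj{\xi}\in\Gamma(\jgg^\mathcal{D})$ with $\conj{\xi}(\epsilon_k,m_{k+1})=\xi$; plugging this section into the hypothesis~\eqref{eq:J_d_evolution} and reversing the computation of the first paragraph (again using~\eqref{eq:J_d_in_terms_of_D_2} and $G$-equivariance of $\phi$) recovers the desired pointwise vanishing.

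I do not expect a substantive obstacle: the whole argument is the same short computation carried out forward and then backward. The one point requiring care is the section-extension step in the reverse direction, which is where the smoothness assumption on $\jgg^\mathcal{D}\rightarrow C'(E)$ made in Definition~\ref{def:J_d} is used essentially; a standard local trivialization and cut-off argument supplies the required section with any prescribed value at a chosen point, which is all that is needed to conclude the pointwise vanishing of $\nu_d^\mathcal{S}$.
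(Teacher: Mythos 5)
Your proposal is correct and follows essentially the same route as the paper's proof: the forward implication is the displayed computation preceding the statement, and the converse realizes each vector of $\mathcal{S}_{(\epsilon_k,m_{k+1})}$ as $\xi_E(\epsilon_k)$ for some $\xi\in\jgg^{\mathcal{D}}_{(\epsilon_k,m_{k+1})}$, extends $\xi$ to a section of $\jgg^{\mathcal{D}}$ using the smoothness assumption from Definition~\ref{def:J_d}, and reverses the computation. Your extra remarks on freeness of the action and the cut-off construction of the section only make explicit what the paper leaves implicit.
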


\begin{proof}
  The previous argument leading to~\eqref{eq:J_d_evolution} shows
  that~\ref{it:J_d_evolution-vert_eq} \imp~\ref{it:J_d_evolution-J_d}
  is true. Conversely, assume that point~\ref{it:J_d_evolution-J_d} is
  valid. For $k=1,\ldots, N-1$ and any
  $((\epsilon_k,m_{k+1}),s)\in \mathcal{S}_{(\epsilon_{k},m_{k+1})}$,
  there is $\xi\in\jgg$ such that $s=\xi_E(\epsilon_k)$. Then, as we
  are assuming that $\jgg^\mathcal{D}$ is a smooth vector bundle,
  there is $\conj{\xi}\in \Gamma(\jgg^\mathcal{D})$ such that
  $\conj{\xi}(\epsilon_k,m_{k+1}) = \xi$. Using this particular
  $\conj{\xi}$, equation~\eqref{eq:J_d_evolution} leads to
  $\nu_d^\mathcal{S}((\epsilon_{k-1},m_k),(\epsilon_k,m_{k+1}))(s) =
  0$ and, eventually, to the validity of
  point~\ref{it:J_d_evolution-vert_eq}.
\end{proof}

The equation~\eqref{eq:J_d_evolution} is known as the \jdef{discrete
  nonholonomic momentum evolution equation} and has been considered,
for instance, in~\cite{ar:cortes_martinez-non_holonomic_integrators}
and~\cite{ar:fernandez_tori_zuccalli-lagrangian_reduction_of_discrete_mechanical_systems}.


\subsection{Discrete LL systems on Lie groups}
\label{sec:discrete_LL_systems_on_lie_groups}

In this section we review the notions of discrete and continuous LL
system on a Lie group $G$ and then show how, in the discrete case, LL
systems are examples of DLDPSs. We also show that their reduced and
``momentum description'' on $\lie{G}^*$ are also examples of DLDPSs in
a natural way and find their equations of motion, which agree with the
ones that appear in the literature. As a concrete case, we explore the
discrete Suslov system.

An LL system on the Lie group $G$ is a nonholonomic mechanical system
$(G,L,\mathcal{D})$ for whom $G$, acting on itself by left
multiplication, is a symmetry group. Such a system can be described
alternatively as a reduced system on $\jgg:=\lie{G}$ with reduced
lagrangian $\ell$ and constraint subspace $\jgd\subset\jgg$. Yet
another description, using a (reduced) Legendre transform, is as a
dynamical system on $\jgg^*$ satisfying the Euler--Poincar\'e--Suslov
equations (see, for
instance~\cite{bo:bloch-nonholonomic_mechanics_and_control}).

\begin{example}\label{ex:suslov_system-continuous}
  A well known example of this type of system, due to
  G. Suslov~\cite{bo:suslov-theoretical_mechanics}, is a model for a
  rigid body, with a fixed point and constrained so that one of the
  components of its angular velocity relative to the body frame
  vanishes. Explicitly, the configuration space is the Lie group
  $G:=SO(3)$, with Lagrangian
  $L(g,\dot{g}) := \frac{1}{2} \langle \I dL_{g^{-1}}(g)(\dot{g}),
  dL_{g^{-1}}(g)(\dot{g})\rangle$, where $L_g$ is the left
  multiplication by $g$ map in $G$, so that
  $dL_{g^{-1}}(g)(\dot{g})\in T_eSO(3) = \jgso(3) \simeq \R^3$ (with
  the Lie algebra operation given by the vector product $\times$),
  $\I$ is the inertia tensor of the body and
  $\langle \cdot,\cdot\rangle$ is the canonical inner product of
  $\R^3$. The nonholonomic constraint ${\mathcal D}$ is determined by
  the subspace $\jgd:=\{\omega\in\R^3:\omega_3=0\}$, requiring that
  ${\mathcal D}_g := dL_g(e)(\jgd)\subset T_gG$ for all $g\in G$. The
  dynamics of this nonholonomic system is completely determined by the
  Euler--Poincar\'e--Suslov equations in $\jgso(3)^*$ that, in terms
  of the angular momentum $M:=\I\omega$ are
  \begin{equation*}
    \begin{cases}
      \dot{M} = M\times (\I^{-1}M) +\lambda e_3,\\
      M\in \jgd^*:=\I(\jgd).
    \end{cases}
  \end{equation*}
\end{example}

A discrete analogue of the LL systems has been considered by
Yu. Fedorov and D. Zenkov
in~\cite{ar:fedorov_zenkov-discrete_nonholonomic_ll_systems_on_lie_groups}
and, also, by R. McLachlan and M. Perlmutter
in~\cite{ar:mclachlan_perlmutter-integrators_for_nonholonomic_mechanical_systems};
the purpose of this section is to show that all the discrete systems
that have been considered (reduced, non-reduced and on $\jgg^*$) can
be seen as DLDPS. A discrete LL system on the Lie group $G$ is a
discrete nonholonomic system $(G,L_d,\mathcal{D}_d,\mathcal{D}^{nh})$
for whom $G$, acting on itself by left multiplication, is a symmetry
group. As seen in Example~\ref{ex:DNHMS_as_DLDPS}, such a discrete
nonholonomic system can naturally be seen as a DLDPS
${\mathcal M}^{LL} = (E^{LL}, L_d^{LL}, {\mathcal D}_d^{LL}, {\mathcal
  D}^{LL}, {\mathcal P}^{LL})$ where the fiber bundle
$E^{LL}\rightarrow M^{LL}$ is $id_G:G\rightarrow G$ (with $G$ acting
by left multiplication on both $G$s), so that $C'(id_G) =G\times G$
while $L_d^{LL}=L_d$, ${\mathcal D}_d^{LL} = {\mathcal D}_d$,
${\mathcal P}^{LL}=0$ and
${\mathcal D}^{LL} = p_1^*({\mathcal D}^{nh})$ for
$p_1:G\times G\rightarrow G$ the projection onto the first factor.

\begin{example}\label{ex:suslov_system-M^LL}
  The discrete version of the Suslov system has been extensively
  studied
  in~\cite{ar:fedorov_zenkov-discrete_nonholonomic_ll_systems_on_lie_groups}
  and~\cite{ar:garciaNaranjo_jimenez-the_geometric_discretisation_of_the_suslov_problem}
  as a reduced discrete mechanical system on $SO(3)$ and as a discrete
  dynamical system obeying the discrete Euler--Lagrange--Suslov
  equations. Here we follow the notation
  of~\cite{ar:garciaNaranjo_jimenez-the_geometric_discretisation_of_the_suslov_problem}\footnote{We
    consider here only the system whose discrete Lagrangian originates
    in $\ell_d^{(1,\epsilon)}$ with $\epsilon=1$. Still, the analysis
    remains valid for arbitrary $\epsilon$ and, also, for
    $\ell_d^{(\infty,\epsilon)}$.}. This nonholonomic discrete
  mechanical system is defined on the space $G:=SO(3)$, with discrete
  Lagrangian $L_d(g_0,g_1):=-\tr(g_1 \J g_0^t)$, where
  \begin{equation*}
    \J:=\left(
      \begin{array}{ccc}
        \frac{1}{2} (I_{22}+I_{33}-I_{11}) & 0 & -I_{13}\\
        0 & \frac{1}{2}(I_{11}+I_{33}-I_{22}) & -I_{23}\\
        -I_{13} & -I_{23} & \frac{1}{2}(I_{11}+I_{22}-I_{33})
      \end{array}
    \right) 
  \end{equation*}
  is the \jdef{mass tensor} associated to the rigid body's inertia
  tensor $\I$ (which can be assumed to have component $I_{12}=0$). The
  infinitesimal variations are determined by the subspace
  \begin{equation*}
    \jgd :=\left\{\left(
        \begin{array}{ccc}
          0&-\omega_3&\omega_2\\
          \omega_3&0&-\omega_1\\
          -\omega_2&\omega_1&0
        \end{array}
      \right) \in\R^{3\times 3} : \omega_3=0\right\} \subset \jgso(3)
  \end{equation*}
  ---that corresponds to the subspace $\jgd\subset\R^3$ defined in
  Example~\ref{ex:suslov_system-continuous} under the isomorphism
  $\R^3\simeq \jgso(3)$--- through
  ${\mathcal D}^{nh}_g := dL_g(1)(\jgd) = \{g\omega\in\R^{3\times 3} :
  \omega \in\jgd\}$ for any $g\in G$. In order to define the discrete
  dynamical constraints, we recall the \jdef{Cayley transform}
  $\cay:\jgso(3)\rightarrow G$ defined by
  $\cay(\omega) := (1+\frac{\omega}{2})(1-\frac{\omega}{2})^{-1}$;
  then we define ${\mathcal S}_d := \cay(\jgd)$. An explicit
  parametrization of ${\mathcal S}_d$ is given in $(25)$
  of~\cite{ar:garciaNaranjo_jimenez-the_geometric_discretisation_of_the_suslov_problem},
  but it won't be necessary for our purposes. It suffices to say that
  $W\in G$ is in ${\mathcal S}_d$ if and only if its rotation axis is
  orthogonal to the $e_3$ axis in $\R^3$ and the angle of rotation is
  in $(-\pi,\pi)$. Then, the \jdef{discrete dynamical constraint} is
  ${\mathcal D}_d :=\{(g_0,g_1)\in G\times G:g_0^{-1}g_1 \in {\mathcal
    S}_d\}$. All together, $(G,L_d,{\mathcal D}_d,{\mathcal D}^{nh})$
  is a discrete nonholonomic mechanical system in the sense of
  Example~\ref{ex:DNHMS_as_DLDPS}. Hence, as seen in the same Example,
  ${\mathcal M}^{LL}:=(E^{LL},L_d^{LL},{\mathcal D}_d^{LL},{\mathcal
    D}^{LL},{\mathcal P}^{LL})$ is a DLDPS, where the fiber
  bundle $E^{LL}\rightarrow M^{LL}$ is $id_G:G\rightarrow G$,
  $L_d^{LL}:= L_d$, ${\mathcal D}_d^{LL}:= {\mathcal D}_d$,
  ${\mathcal D}^{LL} := p_1^*({\mathcal D}^{nh})$ and
  ${\mathcal P}^{LL}=0$. We conclude that the discrete Suslov system
  can be seen as a discrete Lagrange--D'Alembert--Poincar\'e system in
  a natural way.
\end{example}

Back in the setting of an arbitrary discrete $LL$ system, we are given
that $G$ is a symmetry group of the nonholonomic discrete mechanical
system $(G,L_d,\mathcal{D}_d,\mathcal{D}^{nh})$ thus, as noted in
Remark~\ref{remark:G_symmetry_DNHMS}, $G$ is also a symmetry group of
${\mathcal M}^{LL}$ in $\DLDPSC$. In order to reduce
${\mathcal M}^{LL}$ by $G$ we need a discrete affine connection on the
trivial principal bundle $G\rightarrow G/G =\{[e]\}$. It is easy to
check that $\DC(g_0,g_1):=g_1g_0^{-1}$ is such a connection and, for
completeness sake, we recall that, by
Proposition~\ref{prop:reduction_with_different_DC_are_isomorphic}, all
the reduced systems ${\mathcal M}^{LL}/G$ obtained using different
discrete connections\footnote{In this case it is easy to describe all
  possible affine discrete connections: their domain can be extended
  to $G\times G$ and have discrete connection form
  $\DCp{h}(g_0,g_1) = g_1h^{-1} g_0^{-1}$ for a fixed $h\in G$.} are
isomorphic in $\DLDPSC$. Thus, we have the reduced space
${\mathcal M}^r:={\mathcal M}^{LL}/(G,\DC)$. The total space of the
fiber bundle underlying ${\mathcal M}^r$ is $\ti{G}_G = (G\times G)/G$
where the $G$-action is $l^{G\times
  G}_g(g_0,g_1):=(gg_0,gg_1g^{-1})$. The corresponding reduction
morphism $\Upsilon_\DC:G\times G\rightarrow \ti{G}_G\times \{[e]\}$ is
given by
$\Upsilon_\DC(g_0,g_1) = (\pi^{G\times G,G}(g_0,g_1g_0^{-1}),[e])$.

It is easier to work with an isomorphic model of ${\mathcal M}^r$. Let
$\ti{\eta}:G\times G\rightarrow G$ be defined by
$\ti{\eta}(g_0,g_1):=g_0^{-1}g_1g_0$; as $\ti{\eta}$ is $G$-invariant
for the action $l_g^{G\times G}$ defined above it induces a smooth map
$\eta:(G\times G)/G\rightarrow G$ that turns out to be a
diffeomorphism. If we view $\eta$ as an isomorphism of the fiber
bundle $(G\times G)/G\rightarrow \{[e]\}$ onto $G\rightarrow\{e\}$ we
can use Lemma~\ref{le:DLDPS_induced_by_isomorphism_of_FB} to obtain
${\mathcal M}^\eta \in \Ob_{\DLDPSC}$ that is isomorphic to
${\mathcal M}^r$ in $\DLDPSC$. Explicitly,
${\mathcal M}^\eta = (E^{\eta},L_d^\eta,{\mathcal D}_d^\eta,{\mathcal
  D}^\eta,{\mathcal P}^\eta)$, where $E^\eta\rightarrow M^\eta$ is the
fiber bundle $G\rightarrow\{e\}$, $L_d^\eta = \ell_d$ ---where
$\ell_d(W) = L_d(e,W)$---,
${\mathcal D}_d^\eta={\mathcal S}_d\times\{e\}$ ---where
$W\in {\mathcal S}_d$ if and only if $(e,W)\in {\mathcal D}_d$---,
${\mathcal D}^\eta_W = dR_W(e)(\jgd)$ and
\begin{equation*}
  {\mathcal P}^\eta((W_0,e),(W_1,e))(\delta W_1,0) =
  dL_{W_0}(\delta h)
\end{equation*}
where
$(\delta W_1,0) = (-dR_{W_1}(e)(\delta h),0) \in \ti{p}_{34}({\mathcal
  D}^\eta)$. With this information, the evolution in
${\mathcal M}^\eta$ can be determined using the section $\nu_d^\eta$
defined in~\eqref{eq:section_of_motion-def}.

\begin{proposition}\label{prop:eta_evolution_of_LL_system}
  $W_k\in G$ for $k=0,\ldots,N-1$ is a trajectory of ${\mathcal M}^\eta$
  if and only if
  \begin{equation*}
    \begin{cases}
      W_k \in {\mathcal S}_d \stext{ for } k=0,\ldots,N-1,\\
      R_{W_{k+1}}^*(T_{W_{k+1}}\ell_d) -
      L_{W_{k}}^*(T_{W_{k-1}}\ell_d) \in \jgd^{\circ} \stext{ for }
      k=0,\ldots, N-2.
    \end{cases}
  \end{equation*}
\end{proposition}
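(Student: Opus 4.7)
The plan is to invoke Proposition~\ref{prop:dynamics_DLDPS} directly for the DLDPS ${\mathcal M}^\eta$, whose base manifold $M^\eta$ is the single point $\{e\}$. Under this identification a discrete path in $C'(E^\eta)=G\times\{e\}$ is just a sequence $W_0,\ldots,W_{N-1}\in G$, and the kinematic condition $(W_k,e)\in {\mathcal D}_d^\eta={\mathcal S}_d\times\{e\}$ reduces immediately to $W_k\in {\mathcal S}_d$ for $k=0,\ldots,N-1$, which is the first family of equations in the statement.

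For the critical-point condition I would compute $\nu_d^\eta$ via formula~\eqref{eq:section_of_motion-def}. Because $\phi:G\to\{e\}$ is constant, $d\phi\equiv 0$ and the middle summand of~\eqref{eq:section_of_motion-def} vanishes identically, leaving
\begin{equation*}
  \nu_d^\eta((W_0,e),(W_1,e))(\delta W_1,0) = d\ell_d(W_1)(\delta W_1) + d\ell_d(W_0)\bigl({\mathcal P}^\eta((W_0,e),(W_1,e))(\delta W_1,0)\bigr)
\end{equation*}
for every $(\delta W_1,0)\in {\mathcal D}^\eta_{(W_1,e)}=dR_{W_1}(e)(\jgd)$.

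Next I would parameterize by writing $\delta W_1=-dR_{W_1}(e)(\xi)$ with $\xi\in\jgd$, exactly in the form used to define ${\mathcal P}^\eta$. Then $d\ell_d(W_1)(\delta W_1)=-R_{W_1}^*(d\ell_d(W_1))(\xi)$, while the defining formula for ${\mathcal P}^\eta$ gives ${\mathcal P}^\eta(\delta W_1,0)=dL_{W_0}(e)(\xi)$ and hence $d\ell_d(W_0)({\mathcal P}^\eta(\delta W_1,0))=L_{W_0}^*(d\ell_d(W_0))(\xi)$. Requiring $\nu_d^\eta((W_0,e),(W_1,e))(\delta W_1,0)=0$ for every $\xi\in\jgd$ is thus equivalent to
\begin{equation*}
  R_{W_1}^*(d\ell_d(W_1))-L_{W_0}^*(d\ell_d(W_0))\in \jgd^\circ.
\end{equation*}
Applying this same computation to each consecutive pair $(W_{k-1},W_k)$ produces the second family of equations in the proposition, and Proposition~\ref{prop:dynamics_DLDPS} then combines both families into the asserted characterization.

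The only delicate step, and where I would spend the most care, is the bookkeeping of the two natural identifications: the isomorphism ${\mathcal D}^\eta_W\simeq\jgd$ realized by $dR_W(e)$ together with the sign built into the definition of ${\mathcal P}^\eta$, and the passage from covectors $d\ell_d(W_k)\in T_{W_k}^*G$ to their pullbacks $R_{W_k}^*(d\ell_d(W_k))$ and $L_{W_k}^*(d\ell_d(W_k))$ in $\jgg^*$. Apart from this notational care, no further computation is required: everything rests on the explicit form of ${\mathcal M}^\eta$ given just before the proposition and on the general dynamical characterization of Proposition~\ref{prop:dynamics_DLDPS}.
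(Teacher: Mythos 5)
Your proposal is correct and follows essentially the same route as the paper: both invoke Proposition~\ref{prop:dynamics_DLDPS}, observe that the kinematic condition reduces to $W_k\in{\mathcal S}_d$, and evaluate $\nu_d^\eta$ on ${\mathcal D}^\eta_{(W_{k+1},e)}=-dR_{W_{k+1}}(e)(\jgd)$ using the explicit form of ${\mathcal P}^\eta$ to obtain $(R_{W_{k+1}}^*(d\ell_d(W_{k+1}))-L_{W_k}^*(d\ell_d(W_k)))(\jgd)$. Your careful tracking of the sign in the identification $\delta W_1=-dR_{W_1}(e)(\xi)$ is exactly the bookkeeping the paper performs (and is immaterial to the final condition, since $\jgd$ is a linear subspace).
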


\begin{proof}
  Indeed, by Proposition~\ref{prop:dynamics_DLDPS}, $W_k$ is a
  trajectory of ${\mathcal M}^\eta$ if and only if
  \begin{equation*}
    \begin{cases}
      W_k \in {\mathcal S}_d \stext{ for } k=0,\ldots,N-1,\\
      \nu_d^\eta((W_{k},e),(W_{k+1},e))({\mathcal
        D}^\eta_{(W_{k+1},e)}) =0 \stext{ for } k=0,\ldots, N-2.
    \end{cases}
  \end{equation*}
  As, in this case,
  \begin{equation*}
    \begin{split}
      \nu_d^\eta((W_{k},e),(W_{k+1},e))({\mathcal
        D}^\eta_{(W_{k+1},e)}) =&
      \nu_d^\eta((W_{k},e),(W_{k+1},e))(-dR_{W_{k+1}}(e)(\jgd)) \\=&
      (d\ell_d(W_{k+1})\circ dR_{\ti{g}_k}(e) - d\ell_d(W_k) \circ
      dL_{W_{k}}(e))(\jgd) \\=& (R_{W_{k+1}}^*(d\ell_d(W_{k+1})) -
      L_{W_{k}}^*(d\ell_d(W_k)))(\jgd),
    \end{split}
  \end{equation*}
  the statement follows.
\end{proof}

\begin{remark}
  When the discrete path $W_\cdot$ is the reduction of the discrete
  path $g_\cdot$, that is, when
  $W_k = p_1\circ \eta \circ \Upsilon_{\DC}(g_k,g_{k+1}) = g_k^{-1}
  g_{k+1}$, by Corollary~\ref{cor:4_pts}, $W_\cdot$ is a trajectory of
  ${\mathcal M}^\eta$ if and only if $g_\cdot$ is a trajectory of
  ${\mathcal M}^{LL}$ and, by Example~\ref{ex:DNHMS_as_DLDPS}, if and
  only if $g_\cdot$ is a trajectory of the discrete nonholonomic
  system $(G,L_d,{\mathcal D}_d,{\mathcal D}^{nh})$.
\end{remark}

\begin{remark}
  The result of Proposition~\ref{prop:eta_evolution_of_LL_system} is
  part (iv) of Theorem 3.2
  in~\cite{ar:fedorov_zenkov-discrete_nonholonomic_ll_systems_on_lie_groups}. The
  complete result can be read off Corollary~\ref{cor:4_pts} applied to
  ${\mathcal M}^{LL}$ and ${\mathcal M}^\eta$.
\end{remark}

\begin{example}\label{ex:suslov_system-M^eta}
  It is immediate that the discrete Suslov system described in
  Example~\ref{ex:suslov_system-M^LL} is an $LL$-system, so that
  ${\mathcal M}^{LL}\in\Ob_{\DLDPSC}$ can be reduced by $G=SO(3)$ and
  the connection $\DC(g_0,g_1):=g_1g_0^{-1}$, defining a reduced
  system ${\mathcal M}^r:={\mathcal M}^{LL}/(G,\DC)$. Just as it was
  described above, the diffeomorphism
  $\eta:(G\times G)/G\rightarrow G$ defined by
  $\eta(\pi^{G\times G,G}(g_0,g_1)):=g_0^{-1}g_1 g_0$ can be used to
  define ${\mathcal M}^\eta\in \Ob_{\DLDPSC}$ with the property that
  $\eta$ turns out to be an isomorphism between ${\mathcal M}^r$ and
  ${\mathcal M}^\eta$. Explicitly, the fiber bundle
  $E^{\eta}\rightarrow M^{\eta}$ is $G\rightarrow\{1\}$,
  $L_d^\eta(W)=\ell_d(W) = -\tr(\J W)$,
  ${\mathcal D}_d^\eta = {\mathcal S}_d\times\{1\} =
  \cay(\jgd)\times\{1\}$,
  ${\mathcal D}^\eta_{W,1}=dR_W(1)(\jgd) = \{\delta h W\in\R^{3\times
    3}:\delta h\in\jgd\}$ and, for any
  $\delta W_1\in {\mathcal D}^\eta_{W_1,1}$,
  \begin{equation*}
    {\mathcal P}^\eta((W_0,1),(W_0,1))(\delta W_1,0) =
    -W_0\delta W_1 W_1^{-1}.
  \end{equation*}
  This discrete system ${\mathcal M}^\eta$ provides the usual
  (reduced) description of the discrete Suslov system as a dynamical
  system on $SO(3)$. Specializing
  Proposition~\ref{prop:eta_evolution_of_LL_system} to the current
  setting, we have that a discrete path $W_\cdot$ in $G$ is a discrete
  trajectory of ${\mathcal M}^\eta$ if and only if
  \begin{equation*}
    \begin{cases}
      W_k \in {\mathcal S}_d = \cay(\jgd) \stext{ for all } k=0,\ldots\\
      -(W_{k+1}\J-\J W_{k+1}^t)+(\J W_{k}-W_{k}^t\J) \in \jgd^\perp
      \stext{ for all } k=0,\ldots,
    \end{cases}
  \end{equation*}
  where we identify $\jgso(3)^*$ with $\jgso(3)$ using the inner
  product $\langle A_1,A_2\rangle:=\frac{1}{2}\tr(A_1A_2^t)$; in
  particular, $\jgd^\circ$ corresponds to $\jgd^\perp$.
\end{example}

Just as in the continuous case, it is possible to give an alternative
model for ${\mathcal M}^\eta$ as a dynamical system in (a submanifold of)
$\jgg^*$. Recall that the ($-$) discrete Legendre transform of $L_d$
is the map $\F^-L_d:G\times G\rightarrow T^*G$ defined by
$\F^-L_d(g_0,g_1) := -D_1L_d(g_0,g_1)$. Using the trivialization
$\lambda:T^*G\rightarrow G\times \jgg^*$ defined by
$\lambda(\alpha_g):=(g,L_g^*(\alpha_g))$ we see that
\begin{equation*}
  \lambda(\F^-L_d)(g_0,g_1) = \lambda(-D_1L_d(g_0,g_1)) =
  (g_0,L_{g_0}^*(-D_1L_d(g_0,g_1))).
\end{equation*}
If we define $p:G\times G\rightarrow \jgg^*$ by
$p(g_0,g_1) := L_{g_0}^*(-D_1L_d(g_0,g_1))$, for any $\xi\in\jgg$ we
have
\begin{equation*}
  \begin{split}
    p(\xi) =& L_{g_0}^*(-D_1L_d(g_0,g_1))(\xi) =
    -D_1L_d(g_0,g_1)(dL_{g_0}(e)(\xi)) \\=& -\frac{d}{ds}\bigg|_{s=0}
    L_d(g_0\exp(s\xi),g_1) = -\frac{d}{ds}\bigg|_{s=0}
    \ell_d(\exp(-s\xi)g_0^{-1}g_1) \\=& d\ell_d(W_0)(T_eR_{W_0}(\xi))
    = R_{W_0}^*(d\ell_d(W_0))(\xi)
  \end{split}
\end{equation*}
for $W_0:=g_0^{-1}g_1$. Thus, we define the \jdef{reduced Legendre
  transform} ${\mathcal L}:G\rightarrow \jgg^*$ by
\begin{equation}
  \label{eq:reduced_discrete_legendre_transform-def}
  {\mathcal L}(W_0) := p =
  R_{W_0}^*(d\ell_d(W_0)).
\end{equation}

In what follows we assume that ${\mathcal L}$ is a
diffeomorphism\footnote{In fact, under the usual regularity conditions
  on $\ell_d$, ${\mathcal L}$ is a \emph{local} diffeomorphism and
  care must be taken, restricting the constructions to appropriate
  open subsets, where ${\mathcal L}$ is diffeomorphism.}. Then,
applying Lemma~\ref{le:DLDPS_induced_by_isomorphism_of_FB} to
${\mathcal L}$ and ${\mathcal M}^\eta$, we see that there is
${\mathcal M}^S\in \Ob_{\DLDPSC}$ such that ${\mathcal L}$ is an
isomorphism from ${\mathcal M}^\eta$ into ${\mathcal M}^S$. Explicitly,
$E^S\rightarrow M^S$ is $\jgg^*\rightarrow \{0\}$,
$L_d^S= \ell_d \circ {\mathcal L}^{-1}$,
${\mathcal D}_d^S = {\mathcal L}({\mathcal S}_d)$,
${\mathcal D}^S_p = d({\mathcal L},0)({\mathcal
  L}^{-1}(p),e)({\mathcal D}^\eta_{({\mathcal L}^{-1}(p),e)})
=R_{{\mathcal L}^{-1}(p)}^*(d{\mathcal L}({\mathcal
  L}^{-1}(p)))(\jgd)$ and
\begin{equation*}
  {\mathcal P}^S((p_{k-1},0)(p_k,0))(\delta p_k,0) =
  R_{{\mathcal L}^{-1}(p_{k-1})}^*(d{\mathcal L}({\mathcal L}^{-1}(p_{k-1})))
  (\delta h)
\end{equation*}
if
$\delta p_k = -R_{{\mathcal L}^{-1}(p_k)}^*(d{\mathcal L}({\mathcal
  L}^{-1}(p_k)))(\delta h)$.

A direct application of Proposition~\ref{prop:dynamics_DLDPS} to
${\mathcal M}^S\in\Ob_{\DLDPSC}$ leads to the following result.

\begin{proposition}\label{prop:EPS_equations}
  A discrete path $p_\cdot$ in $\jgg^*$ is a discrete trajectory of 
  ${\mathcal M}^S$ if and only if
  \begin{equation}\label{eq:euler_poincare_suslov}
    \begin{cases}
      p_k \in {\mathcal L}({\mathcal S}_d), \stext{ for } k=0,\ldots\\
      (p_{k+1} - \Ad_{{\mathcal L}^{-1}(p_{k})}^*(p_{k})) \in
      \jgd^\circ \stext{ for } k=0,\ldots,
    \end{cases}
  \end{equation}
  where $\Ad_g^* := L_g^*\circ R_{g^{-1}}^*$.
\end{proposition}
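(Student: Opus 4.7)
The plan is to exploit the fact that $\mathcal{L}:G \to \jgg^*$ is (by the standing invertibility assumption) a diffeomorphism, which allows me to transport the dynamics of $\mathcal{M}^\eta$ already described by Proposition~\ref{prop:eta_evolution_of_LL_system} into $\jgg^*$ and then translate the resulting equations using the definitions of $\mathcal{L}$ and $\Ad^*$.

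First I would invoke Lemma~\ref{le:DLDPS_induced_by_isomorphism_of_FB} applied to the fiber bundle isomorphism $(\mathcal{L},\mathrm{id})$ from $G \to \{1\}$ to $\jgg^* \to \{0\}$: this is precisely the construction of $\mathcal{M}^S$ presented in the paragraph preceding the statement, and it certifies that $\mathcal{L}$ (suitably extended) is an isomorphism from $\mathcal{M}^\eta$ to $\mathcal{M}^S$ in $\DLDPSC$. Applying Theorem~\ref{thm:imp_traject_categ} in both directions (using that the inverse of an isomorphism is itself a morphism), a discrete path $p_\cdot$ in $\jgg^*$ is a trajectory of $\mathcal{M}^S$ if and only if the path $W_\cdot := \mathcal{L}^{-1}(p_\cdot)$ in $G$ is a trajectory of $\mathcal{M}^\eta$.

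Next I would apply Proposition~\ref{prop:eta_evolution_of_LL_system} to $W_\cdot$, reducing the problem to the two conditions $W_k \in \mathcal{S}_d$ and
\[
R_{W_{k+1}}^*(d\ell_d(W_{k+1})) - L_{W_k}^*(d\ell_d(W_k)) \in \jgd^\circ.
\]
The kinematic constraint translates at once: $W_k \in \mathcal{S}_d$ iff $p_k = \mathcal{L}(W_k) \in \mathcal{L}(\mathcal{S}_d)$. In the dynamical equation, the first term equals $p_{k+1}$ by the definition~\eqref{eq:reduced_discrete_legendre_transform-def} of $\mathcal{L}$. The one algebraic point that actually needs verification is the identity
\[
L_W^*(d\ell_d(W)) = \Ad_W^*(\mathcal{L}(W)) \stext{ for all } W \in G,
\]
which follows from the definition $\Ad_g^* := L_g^* \circ R_{g^{-1}}^*$ together with the cancellation $R_{g^{-1}}^* \circ R_g^* = (R_g \circ R_{g^{-1}})^* = \mathrm{id}_{\jgg^*}$ applied to $d\ell_d(W)$. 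Substituting both identifications into the second condition yields precisely~\eqref{eq:euler_poincare_suslov}.

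The whole argument is essentially bookkeeping; the only step requiring genuine attention is the $\Ad^*$ identity, but this reduces to a one-line chain-rule computation on cotangent vectors and I do not expect any real obstacle. Alternatively, one could apply Proposition~\ref{prop:dynamics_DLDPS} directly to $\mathcal{M}^S$ and compute $\nu_d^S$ from~\eqref{eq:section_of_motion-def} using the explicit formulas for $L_d^S$, $\mathcal{D}^S$ and $\mathcal{P}^S$, but going through the isomorphism with $\mathcal{M}^\eta$ avoids essentially redoing the calculation already carried out in the proof of Proposition~\ref{prop:eta_evolution_of_LL_system}.
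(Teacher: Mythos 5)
Your proposal is correct and follows essentially the same route as the paper: both transport the dynamics of ${\mathcal M}^\eta$ to ${\mathcal M}^S$ through the isomorphism induced by ${\mathcal L}$ (the paper phrases this via the relation between the sections $\nu_d^S$ and $\nu_d^\eta$ supplied by Lemma~\ref{le:DLDPS_induced_by_isomorphism_of_FB}, you phrase it via the trajectory correspondence of Theorem~\ref{thm:imp_traject_categ}), and both then reduce the dynamical condition of Proposition~\ref{prop:eta_evolution_of_LL_system} to~\eqref{eq:euler_poincare_suslov} using ${\mathcal L}(W)=R_W^*(d\ell_d(W))$ and the identity $L_W^*(d\ell_d(W))=\Ad_W^*({\mathcal L}(W))$. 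The one computation you flag as needing attention is exactly the step the paper also performs, and your verification of it via $R_{W^{-1}}^*\circ R_W^*=\mathrm{id}$ is sound.
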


The system~\eqref{eq:euler_poincare_suslov} is known as the
\jdef{discrete Euler--Poincar\'e--Suslov equations} (see Theorem 3.3
in~\cite{ar:fedorov_zenkov-discrete_nonholonomic_ll_systems_on_lie_groups}).

\begin{proof}
  As ${\mathcal M}^S$ is constructed out of ${\mathcal M}^\eta$ using the
  diffeomorphism ${\mathcal L}$ and
  Lemma~\ref{le:DLDPS_induced_by_isomorphism_of_FB}, we know that
  \begin{equation*}
    \begin{split}
      \nu_d^S(({\mathcal L}(W_{k}),0),({\mathcal L}(W_{k+1}),0))
      (d{\mathcal L}(W_{k+1})&(\delta W_{k+1}),0) \\=&
      \nu_d^\eta((W_{k},e),(W_{k+1},e))(\delta W_{k+1}k,0).
    \end{split}
  \end{equation*}
  Using the computation of $\nu_d^\eta$ developed in the proof of
  Proposition~\ref{prop:eta_evolution_of_LL_system},
  \begin{equation*}
    \begin{split}
      \nu_d^S((p_{k},0),(p_{k+1},0))(&{\mathcal D}^S_{(p_{k+1},0)}) \\=&
      \nu_d^S(({\mathcal L}(W_{k}),0),({\mathcal
        L}(W_{k+1}),0))(d{\mathcal L}(W_{k+1},e)({\mathcal
        D}^\eta_{(W_{k+1},e)})) \\=&
      \nu_d^\eta((W_{k},e),(W_{k+1},e))({\mathcal
        D}^\eta_{(W_{k+1},e)}) \\=& (R_{W_{k+1}}^*(d\ell_d(W_{k+1})) -
      L_{W_{k}}^*(d\ell_d(W_k)))(\jgd) \\=& (p_{k+1} -
      \Ad_{W_{k}}^*(p_{k}))(\jgd) = (p_{k+1} - \Ad_{{\mathcal
          L}^{-1}(p_{k})}^*(p_{k}))(\jgd)
    \end{split}
  \end{equation*}
  As ${\mathcal M}^S\in\Ob_{\DLDPSC}$, the result follows from
  Proposition~\ref{prop:dynamics_DLDPS}.
\end{proof}

\begin{example}\label{ex:suslov_system-M^S}
  The discrete Legendre transform
  ${\mathcal L}:G\rightarrow\jgso(3)^*$ associated to the (reduced)
  discrete Suslov system ${\mathcal M}^\eta$ described in
  Example~\ref{ex:suslov_system-M^eta} can be easily computed
  using~\eqref{eq:reduced_discrete_legendre_transform-def}: for
  $W\in G$ and $\xi\in\jgso(3)$,
  \begin{equation*}
    \begin{split}
      {\mathcal L}(W)(\xi) =& \frac{d}{ds}\bigg|_{s=0} \ell_d(\exp(s\xi)W)
      = \frac{d}{ds}\bigg|_{s=0} -\tr(\J \exp(s\xi)W) = -\tr(\J \xi W)
      \\=& -\frac{1}{2} (\tr(\J \xi W) + \tr((\J \xi W)^t))
      = -\frac{1}{2} (\tr(W \J \xi) - \tr(\J W^t \xi)) \\=&
      \frac{1}{2} \tr((W \J -\J W^t) \xi^t) = \langle W \J -\J W^t,
      \xi\rangle,
    \end{split}
  \end{equation*}
  and we conclude that ${\mathcal L}(W) = W \J -\J W^t$. It is easy to
  check that ${\mathcal L}$ is a local diffeomorphism, but it is not
  globally injective (nor onto). Still, the previous arguments can be
  applied locally to a pair of domains where ${\mathcal L}$ restricts
  to a diffeomorphism. Hence, ${\mathcal L}$ can be used together with
  Lemma~\ref{le:DLDPS_induced_by_isomorphism_of_FB} to construct
  ${\mathcal M}^S\in \Ob_{\DLDPSC}$ that is isomorphic to
  ${\mathcal M}^\eta$. Proposition~\ref{prop:EPS_equations} provides
  the equations of motion for ${\mathcal M}^S$. Let $p_\cdot$ be a
  discrete path in $\jgso(3)^*\simeq \jgso(3)$, and define
  $W_k:={\mathcal L}^{-1}(p_k)\in G$ for all $k$. Then, by
  Proposition~\ref{prop:EPS_equations}, $p_\cdot$ is a trajectory of
  ${\mathcal M}^S$ if and only if
  \begin{equation*}
    \begin{cases}
      p_k \in {\mathcal L}(S_d) \stext{ for } k=0,\ldots,
      \stext{(equivalently)} W_k\in
      {\mathcal S}_d,\\
      p_{k+1} - \Ad_{W_{k}}(p_{k}) \in \jgd^\perp \stext{ for }
      k=0,\ldots
    \end{cases}
  \end{equation*}
  where, as before, $\jgd^\circ$ is identified with $\jgd^\perp$. As,
  for any $\xi\in\jgso(3)$,
  \begin{equation*}
    \begin{split}
      \Ad_{W_{k}}^*(p_{k})(\xi) =&
      p_{k}(\Ad_{W_{k}}(\xi)) = p_{k}(W_{k}\xi W_{k}^{-1})) \\=&
      \frac{1}{2}\tr(p_{k}(W_{k}\xi W_{k}^{-1})^t) =
      \frac{1}{2}\tr(
      W_{k}^tp_{k}W_{k} \xi^t) \\=& (W_{k}^tp_{k}W_{k})(\xi)
    \end{split}
  \end{equation*}
  we see that
  $p_{k+1}-\Ad_{W_{k}}^*(p_{k}) = p_{k+1}-W_{k}^tp_{k}W_{k}$. Then,
  the discrete Euler--Poincar\'e--Suslov
  equations~\eqref{eq:euler_poincare_suslov} for ${\mathcal M}^S$ are
  \begin{equation*}
    \begin{cases}
      p_k \in {\mathcal L}(S_d) \stext{ for } k=0,\ldots,\\
      p_{k+1}-W_{k}^tp_{k}W_{k} \in \jgd^\perp \stext{ for }
      k=0,\ldots.
    \end{cases}
  \end{equation*}
  This expression matches the ones given in (6.11)
  of~\cite{ar:fedorov_zenkov-discrete_nonholonomic_ll_systems_on_lie_groups}
  and (29)
  of~\cite{ar:garciaNaranjo_jimenez-the_geometric_discretisation_of_the_suslov_problem}.
\end{example}


\section{Reduction by two stages}
\label{sec:reduction_by_two_stages}

Having introduced a category of DLDPSs, a notion of symmetry group for
a DLDPS and a process of reduction for such symmetric objects that is
closed in the category, we study the problem of reduction by stages in
this section. In other words, when $G$ is a symmetry group of
$\mathcal{M}$ and $H$ is a subgroup of $G$ we want to compare the
result of the reduction $\mathcal{M}/G$ with that of the iterated
reduction $(\mathcal{M}/H)/(G/H)$ whenever possible.


\subsection{Residual symmetry}
\label{sec:residual_symmetry}

Let $G$ be a symmetry group of
$\mathcal{M} = (E,L_d,\mathcal{D}_d,\mathcal{D},\mathcal{P})\in
\Ob_{\DLDPSC}$ and $H\subset G$ a closed normal subgroup.  The
following result proves that $H$ is a symmetry group of $\mathcal{M}$.

\begin{proposition}\label{prop:H_symmetry_subgroup}
  Let $G$ be a symmetry group of $\mathcal{M}\in\Ob_{\DLDPSC}$. If
  $H\subset G$ is a closed Lie subgroup, then $H$ is a symmetry group
  of $\mathcal{M}$.
\end{proposition}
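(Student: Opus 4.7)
The plan is to verify each of the five conditions of Definition~\ref{def:G_symmetry_DLDPS} for the $H$-action obtained by restricting the given $G$-action to the subgroup $H$. Four of them come essentially for free: the $H$-invariance of $L_d$, of $\mathcal{D}_d$, and of $\mathcal{D}$, together with the $H$-equivariance of $\mathcal{P}$, are immediate consequences of the corresponding $G$-statements, since the relevant $H$-actions \eqref{eq:G-action_C'}, \eqref{eq:G-action_C''}, \eqref{eq:G-action_TE}, \eqref{eq:G-action_T(C'(E))} and \eqref{eq:G-action_p3(TE)} are just the restrictions to $H\subset G$ of the analogous $G$-actions. So the only real content is verifying point~\ref{it:G_symmetry_DLDPS-act_on_bundle}, namely that $H$ acts on the fiber bundle $\phi : E\rightarrow M$ in the sense of Definition~\ref{def:G_action_on_fiber_bundle}.

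The first substantial step is to check that $\pi^{M,H} : M \rightarrow M/H$ is a principal $H$-bundle. Since $\pi^{M,G} : M\rightarrow M/G$ is a principal $G$-bundle, the $G$-action on $M$ is free and proper; the restriction of this action to $H$ is clearly still free, and it remains proper because the restriction of a proper action to a closed subgroup is proper. Invoking the quotient manifold theorem (or equivalently the Godement criterion) then endows $M/H$ with a smooth structure making $\pi^{M,H}$ into a principal $H$-bundle.

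To finish the verification of Definition~\ref{def:G_action_on_fiber_bundle} for $H$, I would observe that $l^{E}|_{H}$ and $r^{F}|_{H}$ are smooth restricted actions, that $\phi$ is $H$-equivariant because it is $G$-equivariant, and that for each $m\in M$ a $G$-invariant trivializing chart $(U,\Phi_{U})$ provided by the $G$-symmetry is already $H$-invariant on the base and $H$-equivariant as a bundle map, simply because $H \subset G$. The main (and essentially only) obstacle is establishing properness of the restricted $H$-action so that $\pi^{M,H}$ has the required principal bundle structure; once this is settled the remainder of the proof is a mechanical restriction argument.
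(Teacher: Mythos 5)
Your proposal is correct and follows essentially the same route as the paper: the paper disposes of conditions (2)--(5) of Definition~\ref{def:G_symmetry_DLDPS} by the same restriction observation, and handles condition~\ref{it:G_symmetry_DLDPS-act_on_bundle} by citing Lemma 5.7 of~\cite{ar:fernandez_tori_zuccalli-lagrangian_reduction_of_discrete_mechanical_systems_by_stages}, whose content is precisely the argument you spell out (freeness and properness of the restricted action of the closed subgroup $H$, the quotient manifold theorem giving the principal $H$-bundle structure on $\pi^{M,H}$, and inheritance of the $G$-invariant trivializing charts). No gaps.
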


\begin{proof}
  By Lemma 5.7
  of~\cite{ar:fernandez_tori_zuccalli-lagrangian_reduction_of_discrete_mechanical_systems_by_stages}
  we have that if $G$ is a Lie group that acts on the fiber bundle
  $(E,M,\phi,F)$ and $H\subset G$ is a closed Lie subgroup, then $H$
  acts on the fiber bundle $(E,M,\phi,F)$ so that
  condition~\ref{it:G_symmetry_DLDPS-act_on_bundle} in
  Definition~\ref{def:G_symmetry_DLDPS} is valid. The remaining
  conditions follow from the fact that $G$ satisfies them and that $H$
  acts by the restriction of the corresponding actions of $G$.
\end{proof}

In what follows we consider the action of the group $G/H$ on the
system obtained after having reduced the symmetry of $H$. As a first
step we recall the statement of Lemma 7.1 in
\cite{ar:fernandez_tori_zuccalli-lagrangian_reduction_of_discrete_mechanical_systems_by_stages},
that establishes that $G/H$ acts on the fiber bundle obtained after
the first reduction stage.

\begin{lemma}\label{le:residual_action}
  Let $G$ be a Lie group that acts on the fiber bundle $(E,M,\phi,F)$
  and $H\subset G$ be a closed normal subgroup. Define the maps
  \begin{gather*}
    l_{\pi^{G,H}(g)}^{\ti{H}_E}(\pi^{E\times H,H}(\epsilon,w)) :=
    \pi^{E\times H,H}(l_{g}^{E}(\epsilon),l_{g}^{G}(w)),
    \\
    l_{\pi^{G,H}(g)}^{M/H}(\pi^{M,H}(m)) := \pi^{M,H}(l_{g}^{M}(m)).
  \end{gather*}
  Then $l^{\ti{H}_E}$, $l^{M/H}$ and the trivial right action on
  $F\times H$ define an action of $G/H$ on the fiber bundle
  $(\ti{H}_E,M/H,p^{M/H},F\times H)$.
\end{lemma}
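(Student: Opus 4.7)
The plan is to verify, in sequence, (i) well-definedness of the two maps, (ii) the group-action axioms, and (iii) the three bullet points of Definition~\ref{def:G_action_on_fiber_bundle}; the whole argument rests on the normality of $H$ in $G$ and on working with $G$-invariant trivializations of $E$ (which exist by hypothesis) when descending to $\ti{H}_E$.

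First I would check well-definedness. For $l^{M/H}$, suppose $g'=gh_1$ with $h_1\in H$ and $m'=l^M_{h_2}(m)$ with $h_2\in H$. Then $l^M_{g'}(m')=l^M_{gh_1 h_2}(m)$, and since $H$ is normal, $h':=g(h_1 h_2)g^{-1}\in H$, so $l^M_{g'}(m') = l^M_{h'}(l^M_g(m))$, giving $\pi^{M,H}(l^M_{g'}(m'))=\pi^{M,H}(l^M_g(m))$. For $l^{\ti{H}_E}$, the analogous computation on $E\times H$ uses the fact that the $H$-action on $E\times H$ is $h\cdot(\epsilon,w)=(l^E_h(\epsilon),hwh^{-1})$: with the same $h_1,h_2$ one finds $(l^E_{g'}(\epsilon'),l^G_{g'}(w'))=h'\cdot(l^E_g(\epsilon),l^G_g(w))$ for $h':=g h_1 h_2 g^{-1}\in H$, using crucially that $l^G_g$ is conjugation. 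Hence both maps descend to the quotients.

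Second, once the maps are well defined, the group-action axioms are immediate: the identity class $\pi^{G,H}(e)$ acts as the identity, and composition reduces to the corresponding composition in $G$ before projecting. Thirdly, for the fiber-bundle axioms of Definition~\ref{def:G_action_on_fiber_bundle}: the $G/H$-action on $M/H$ is free (because $l^M$ is free and the stabilizer of $\pi^{M,H}(m)$ in $G/H$ corresponds to elements $g\in G$ with $l^M_g(m)\in Hm$, forcing $g\in H$, i.e.\ $\pi^{G,H}(g)=e$), and the quotient $(M/H)/(G/H)$ is canonically identified with $M/G$, making $\pi^{M/H,G/H}:M/H\to (M/H)/(G/H)$ a principal $G/H$-bundle. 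Equivariance of $p^{M/H}:\ti{H}_E\to M/H$ follows from equivariance of $\phi:E\to M$: unpacking definitions, $p^{M/H}(l^{\ti{H}_E}_{\pi^{G,H}(g)}(\pi^{E\times H,H}(\epsilon,w))) = \pi^{M,H}(\phi(l^E_g(\epsilon)))= \pi^{M,H}(l^M_g(\phi(\epsilon))) = l^{M/H}_{\pi^{G,H}(g)}(p^{M/H}(\pi^{E\times H,H}(\epsilon,w)))$.

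The main obstacle, and the most delicate step, is producing $G/H$-equivariant trivializing charts for $\ti{H}_E$. The plan is to start with a $G$-invariant, $G$-equivariant trivialization $\Phi_U:\phi^{-1}(U)\to U\times F$ over a $G$-invariant open $U\subset M$ (as provided by the hypothesis that $G$ acts on the fiber bundle $E$); pass to $U/H\subset M/H$, which is $G/H$-invariant and open, and build a trivialization of $\ti{H}_E$ over $U/H$ by sending $\pi^{E\times H,H}(\epsilon,w)$ with $\phi(\epsilon)\in U$ to $(\pi^{M,H}(\phi(\epsilon)),(f,w'))\in (U/H)\times(F\times H)$, where $(f,w')$ is extracted from $\Phi_U(\epsilon)$ and $w$ by absorbing the $H$-ambiguity through the diagonal $H$-action. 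A direct calculation using the $G$-equivariance of $\Phi_U$ and the normality of $H$ then shows that this trivialization intertwines the $G/H$-action on $\ti{H}_E$ with the product action on $(U/H)\times (F\times H)$, where $G/H$ acts trivially on $F\times H$. Since this whole argument is a straightforward adaptation of Lemma 7.1 in~\cite{ar:fernandez_tori_zuccalli-lagrangian_reduction_of_discrete_mechanical_systems_by_stages}, where the same structural statement is established in the unconstrained setting, I would simply invoke that result.
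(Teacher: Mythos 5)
Your proposal is correct and matches the paper's treatment: the paper gives no proof of this lemma at all, simply recalling it verbatim as Lemma 7.1 of the cited companion paper (the statement concerns only the fiber-bundle action, with no constraint data, so it is literally the same lemma rather than an adaptation), which is exactly the citation you fall back on. Your sketch of the underlying verification is sound — the only tiny omission is that freeness of $l^{\ti{H}_E}$ should also be noted, but it follows at once from the freeness of $l^{M/H}$ via the equivariance of $p^{M/H}$ that you already established.
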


As in Section~\ref{sec:Reduction_discrete_LDP_systems}, these actions
induce "diagonal" actions on $C'(\ti{H}_E)$, $C''(\ti{H}_E)$ through
definitions~\eqref{eq:G-action_C'} and~\eqref{eq:G-action_C''} and
"lifted" actions on the spaces $T\ti{H}_E$ and
$\ti{p_{34}}^{*}(TC'(\ti{H}_E))$ through
definitions~\eqref{eq:G-action_TE} and~\eqref{eq:G-action_p3(TE)}.

The reduction of $\mathcal{M}$ by $H$ requires the choice of a
discrete connection $\DCp{H}$ on the principal $H$-bundle
$\pi^{M,H}:M\rightarrow M/H$. It turns out that, under some conditions
on $\DCp{H}$ that we explore next, $G/H$ is a symmetry group of
$\check{\mathcal{M}} := \mathcal{M}/(H,\DCp{H})$.

\begin{lemma}\label{le:G-invariant_horiz_space_equivalence}
  Let $G$ be a Lie group that acts on $M$ by the action $l^{M}$ in
  such a way that $\pi^{M,G}:M\rightarrow M/G$ is a principal
  $G$-bundle. Assume that $H\subset G$ is a closed normal subgroup and
  that $\DCp{H}$ is a discrete connection on the principal $H$-bundle
  $\pi ^{M,H}:M\rightarrow M/H$, whose domain $\jgU$ is $G$-invariant
  by the diagonal action $l^{M\times M}$. Then, the following
  statements are equivalent.
  \begin{enumerate}
  \item For each $g\in G$ and $(m_0,m_1) \in\jgU$,
    \begin{equation*}
      \DCp{H}(l_{g}^{M}(m_0),l_{g}^{M}(m_1)) = g\DCp{H}(m_0,m_1)g^{-1}.
    \end{equation*}		
  \item The submanifold $Hor_{\DCp{H}}\subset M\times M$ is
    $G$-invariant by the action $l^{M\times M}$.
  \end{enumerate}
\end{lemma}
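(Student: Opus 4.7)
The plan is to work with the characterization of $Hor_{\DCp{H}}$ as the level set $\{(m_0,m_1)\in \jgU : \DCp{H}(m_0,m_1)=e\}$, together with Proposition~\ref{prop:existence_of_g_moving_to_hor}, which identifies $\DCp{H}(m_0,m_1)$ as the unique $h\in H$ with $(m_0,l^M_{h^{-1}}(m_1))\in Hor_{\DCp{H}}$. Since $\jgU$ is assumed $G$-invariant, applying $l^{M\times M}_g$ preserves the ambient space where both $\DCp{H}$ and $Hor_{\DCp{H}}$ live.

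For the direction (1)$\Rightarrow$(2), I would take $(m_0,m_1)\in Hor_{\DCp{H}}$, so that $\DCp{H}(m_0,m_1)=e$, and apply the hypothesized conjugation identity to obtain $\DCp{H}(l^M_g(m_0),l^M_g(m_1))=g\,e\,g^{-1}=e$; hence $l^{M\times M}_g(m_0,m_1)\in Hor_{\DCp{H}}$. This is essentially a one-line verification.

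For the harder direction (2)$\Rightarrow$(1), take $(m_0,m_1)\in \jgU$, write $h:=\DCp{H}(m_0,m_1)\in H$, so Proposition~\ref{prop:existence_of_g_moving_to_hor} gives $(m_0,l^M_{h^{-1}}(m_1))\in Hor_{\DCp{H}}$. By hypothesis~(2), $l^{M\times M}_g$ carries this pair back into $Hor_{\DCp{H}}$. The key algebraic manipulation is then to rewrite
\begin{equation*}
  l^M_g\bigl(l^M_{h^{-1}}(m_1)\bigr)=l^M_{gh^{-1}}(m_1)=l^M_{(gh^{-1}g^{-1})g}(m_1)=l^M_{gh^{-1}g^{-1}}\bigl(l^M_g(m_1)\bigr),
\end{equation*}
so that $\bigl(l^M_g(m_0),l^M_{(ghg^{-1})^{-1}}(l^M_g(m_1))\bigr)\in Hor_{\DCp{H}}$. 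Here the normality of $H$ in $G$ enters crucially: it guarantees that $ghg^{-1}\in H$, so this element is a legitimate candidate for the value of the $H$-valued connection form. The uniqueness part of Proposition~\ref{prop:existence_of_g_moving_to_hor}, applied to the pair $(l^M_g(m_0),l^M_g(m_1))\in\jgU$, then forces $\DCp{H}(l^M_g(m_0),l^M_g(m_1))=ghg^{-1}=g\,\DCp{H}(m_0,m_1)\,g^{-1}$, which is precisely~(1).

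The only real obstacle is the bookkeeping in the (2)$\Rightarrow$(1) direction: one must be careful that $ghg^{-1}$ lies in $H$ (which is exactly where normality is used) so that the uniqueness statement of Proposition~\ref{prop:existence_of_g_moving_to_hor} can be invoked within the group $H$ where $\DCp{H}$ takes its values. Everything else is straightforward symbol manipulation.
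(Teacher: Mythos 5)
Your proof is correct, and it is essentially the standard argument: the paper itself gives no details here, deferring to the analogous Lemma 7.2 of the authors' earlier paper on unconstrained reduction by stages, whose proof proceeds exactly as you do — characterizing $Hor_{\DCp{H}}$ as the level set $\DCp{H}=e$ for (1)$\Rightarrow$(2), and for (2)$\Rightarrow$(1) transporting the horizontal representative by $l^{M\times M}_g$ and invoking normality of $H$ together with the uniqueness in Proposition~\ref{prop:existence_of_g_moving_to_hor}. Your writeup correctly identifies the two places where the hypotheses enter ($G$-invariance of $\jgU$ to make both sides well defined, and normality so that $ghg^{-1}\in H$ is an admissible value of the $H$-valued connection form), so it supplies precisely the details the paper omits.
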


\begin{proof}
  The proof is analogous to the proof of Lemma 7.2 in
  \cite{ar:fernandez_tori_zuccalli-lagrangian_reduction_of_discrete_mechanical_systems_by_stages}.
\end{proof}

\begin{proposition}\label{prop:G/H-symmetry}
  Let $G$ be a symmetry group of
  $\mathcal{M} = (E, L_d, \mathcal{D}_d, \mathcal{D}, \mathcal{P})
  \in\Ob_{\DLDPSC}$ and $H\subset G$ be a closed normal
  subgroup. Choose a discrete connection $\DCp{H}$ on the principal
  $H$-bundle $\pi ^{M,H}:M\rightarrow M/H$ so that one of the
  conditions of Lemma \ref{le:G-invariant_horiz_space_equivalence}
  holds. Then, $G/H$ is a symmetry group of
  $\check{\mathcal{M}} := \mathcal{M}/(H,\DCp{H}) = (\ti{H}_E,
  \check{L}_d, \check{\mathcal{D}}_d, \check{\mathcal{D}},
  \check{\mathcal{P}})$.
\end{proposition}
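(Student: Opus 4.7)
The plan is to verify, one by one, the five conditions of Definition~\ref{def:G_symmetry_DLDPS} for the quotient group $G/H$ acting on $\check{\mathcal{M}}$. Condition~\ref{it:G_symmetry_DLDPS-act_on_bundle}, that $G/H$ acts on the fiber bundle $p^{M/H}:\ti{H}_E\rightarrow M/H$, is exactly the content of Lemma~\ref{le:residual_action}. All the remaining conditions will be transferred from the analogous $G$-properties of $\mathcal{M}$ through the reduction map $\Upsilon_{\DCp{H}}:E\times M\rightarrow \ti{H}_E\times (M/H)$, so the central observation is that $\Upsilon_{\DCp{H}}$ is $G$-equivariant when $G$ acts diagonally on $E\times M$ and acts on $\ti{H}_E\times (M/H)$ via the quotient $G\to G/H$ and the actions of Lemma~\ref{le:residual_action}.

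First I would establish this equivariance. Recall from~\eqref{eq:Upsilon_DC-def} that
\begin{equation*}
\Upsilon_{\DCp{H}}(\epsilon,m) = (\pi^{E\times H,H}(\epsilon,\DCp{H}(\phi(\epsilon),m)),\pi^{M,H}(m)).
\end{equation*}
The second component satisfies $\pi^{M,H}(l_g^M(m)) = l_{\pi^{G,H}(g)}^{M/H}(\pi^{M,H}(m))$ by Lemma~\ref{le:residual_action}. For the first component, the hypothesis via Lemma~\ref{le:G-invariant_horiz_space_equivalence} gives $\DCp{H}(l_g^M(\phi(\epsilon)),l_g^M(m)) = g\DCp{H}(\phi(\epsilon),m)g^{-1}$, and then the $G$-equivariance of $\phi$ together with the definition of $l^{\ti{H}_E}$ in Lemma~\ref{le:residual_action} yields
\begin{equation*}
\pi^{E\times H,H}(l_g^E(\epsilon),g\DCp{H}(\phi(\epsilon),m)g^{-1}) = l_{\pi^{G,H}(g)}^{\ti{H}_E}(\pi^{E\times H,H}(\epsilon,\DCp{H}(\phi(\epsilon),m))),
\end{equation*}
completing the $G$-equivariance.

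With this in hand, conditions 2, 3, and 4 of Definition~\ref{def:G_symmetry_DLDPS} follow formally. For the lagrangian, $L_d = \check{L}_d\circ \Upsilon_{\DCp{H}}$ combined with the $G$-invariance of $L_d$ and the $G$-equivariance of $\Upsilon_{\DCp{H}}$ (together with surjectivity of $\Upsilon_{\DCp{H}}$) forces $\check{L}_d$ to be $G/H$-invariant. For $\check{\mathcal{D}}_d = \Upsilon_{\DCp{H}}(\mathcal{D}_d)$, the same equivariance together with $G$-invariance of $\mathcal{D}_d$ gives that $\check{\mathcal{D}}_d$ is $G/H$-invariant. The argument for $\check{\mathcal{D}} = d\Upsilon_{\DCp{H}}(\mathcal{D})$ is the same after differentiation, using $G$-invariance of $\mathcal{D}$ under the lifted action $l^{TE}$.

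The main obstacle, as I expect it to be the technically delicate step, is verifying the $G/H$-equivariance of $\check{\mathcal{P}}$ (condition~\ref{it:G_symmetry_DLDPS-P_equiv}). The issue is that $\check{\mathcal{P}}$ is defined by formula~\eqref{eq:P_check-def} only after choosing a lift $((\epsilon_0,m_1),(\epsilon_1,m_2))\in C''(E)$ of the given element of $C''(\ti{H}_E)$, so the equivariance must be shown to be independent of that choice. The strategy is: given $\bar{g}:=\pi^{G,H}(g)\in G/H$, pick any lift $((\epsilon_0,m_1),(\epsilon_1,m_2))$ of $((v_0,r_1),(v_1,r_2))$; then by the $G$-equivariance of $\Upsilon_{\DCp{H}}^{(2)}$ (Lemma~\ref{le:prop_Upsilon}) the element $l_g^{C''(E)}((\epsilon_0,m_1),(\epsilon_1,m_2))$ is a lift of $l_{\bar{g}}^{C''(\ti{H}_E)}((v_0,r_1),(v_1,r_2))$, and by the $G$-equivariance of $\mathcal{D}$ the lift of $(\delta v_1,0)$ transforms accordingly. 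Plugging into~\eqref{eq:P_check-def} and using both the $G$-equivariance of $\mathcal{P}$ (condition~\ref{it:G_symmetry_DLDPS-P_equiv} for $G$ acting on $\mathcal{M}$) and of $d\Upsilon_{\DCp{H}}$, one obtains $\check{\mathcal{P}}(l_{\bar{g}}^{C''(\ti{H}_E)}(\cdots))(dl_{\bar{g}}^{C'(\ti{H}_E)}(\delta v_1,0)) = dl_{\bar{g}}^{\ti{H}_E}\circ \check{\mathcal{P}}(\cdots)(\delta v_1,0)$, which is the desired equivariance. Together with Step~1 and the three invariance statements just discussed, this establishes all five conditions and completes the proof.
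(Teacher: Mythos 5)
Your proposal is correct and follows essentially the same route as the paper's proof: both rest on the single equivariance identity $\Upsilon_{\DCp{H}}\circ l^{C'(E)}_g = l^{C'(\ti{H}_E)}_{\pi^{G,H}(g)}\circ\Upsilon_{\DCp{H}}$ (the paper's~\eqref{eq:Upsilon_equiv}, which you correctly derive from the hypothesis of Lemma~\ref{le:G-invariant_horiz_space_equivalence} together with the $G$-equivariance of $\phi$ and normality of $H$) and then push each invariance property of $\mathcal{M}$ through it, handling $\check{\mathcal{P}}$ by checking independence of the chosen lift exactly as the paper does (by reference to its earlier companion result).
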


\begin{proof}
  By Lemma~\ref{le:residual_action}, $G/H$ acts on the fiber bundle
  $(\ti{H}_E,M/H,p^{M/H},F\times H)$. Thus, we have the $G/H$-action
  $l^{C'(\ti{H}_E)}_{\pi^{G,H}(g)}(v_0,r_1) :=
  (l^{\ti{H}_E}_{\pi^{G,H}(g)}(v_0),l^{M/H}_{\pi^{G,H}(g)}(r_1))$. Unraveling
  the definitions, we have that, for $g\in G$,
  \begin{equation}
    \label{eq:Upsilon_equiv}
    \Upsilon_{\DCp{H}} \circ l^{C'(E)}_g =
    l^{C'(\ti{H}_E)}_{\pi ^{G,H}(g)}\circ \Upsilon_{\DCp{H}}.
  \end{equation}
  In addition, just as in the proof of Proposition 7.3
  in~\cite{ar:fernandez_tori_zuccalli-lagrangian_reduction_of_discrete_mechanical_systems_by_stages},
  $\check{L}_d : \ti{H}_E \times (M/H) \rightarrow \R$ is
  $l^{C'(\ti{H}_E)}$-invariant.
	
  As $\check{\mathcal{D}}_d :=\Upsilon_{\DCp{H}}(\mathcal{D}_d)$, for
  any $g\in G$, using~\eqref{eq:Upsilon_equiv} and the $G$-invariance of
  $\mathcal{D}_d$, we have that
  \begin{equation*}
    l^{C'(\ti{H}_E)}_{\pi^{G,H}(g)}(\check{\mathcal{D}}_d) =
    l^{C'(\ti{H}_E)}_{\pi^{G,H}(g)}(\Upsilon_{\DCp{H}}(\mathcal{D}_d)) =
    \Upsilon_{\DCp{H}}(l^{C'(E)}_g(\mathcal{D}_d)) =
    \Upsilon_{\DCp{H}}(\mathcal{D}_d) = \check{\mathcal{D}}_d,
  \end{equation*}
  so that $\check{\mathcal{D}}_d$ is $G/H$-invariant.
  
  Also, as $\check{\mathcal{D}}:=d\Upsilon_{\DCp{H}}(\mathcal{D})$,
  for any $g\in G$, using~\eqref{eq:Upsilon_equiv} and the
  $G$-invariance of $\mathcal{D}$, we have
  \begin{equation*}
    \begin{split}
      l^{TC'(\ti{H}_E)}_{\pi^{G,H}(g)}(\check{\mathcal{D}}) =&
      dl^{C'(\ti{H}_E)}_{\pi^{G,H}(g)}(d\Upsilon_{\DCp{H}}(\mathcal{D}))
      = d(l^{C'(\ti{H}_E)}_{\pi^{G,H}(g)}\circ
      \Upsilon_{\DCp{H}})(\mathcal{D}) \\=& d(\Upsilon_{\DCp{H}} \circ
      l^{C'(E)}_g)(\mathcal{D}) =
      d\Upsilon_{\DCp{H}}(dl^{C'(E)}_g(\mathcal{D})) \\=&
      d\Upsilon_{\DCp{H}}(l^{TC'(E)}_g(\mathcal{D})) =
      d\Upsilon_{\DCp{H}}(\mathcal{D}) = \check{\mathcal{D}},
    \end{split}
  \end{equation*}
  so that $\check{\mathcal{D}}$ is $G/H$-invariant.

  The proof of the $G/H$-invariance of $\check{\mathcal{P}}$ mimics
  the one given in Proposition 7.3
  of~\cite{ar:fernandez_tori_zuccalli-lagrangian_reduction_of_discrete_mechanical_systems_by_stages},
  adapted to the current context.
\end{proof}


\subsection{Comparison with reduction by the full symmetry group}
\label{sec:comparison_with_reduction_by_the_full_symmetry_group}

Let $G$ be a symmetry group of
$\mathcal{M} = (E,L_d,\mathcal{D}_d,\mathcal{D},\mathcal{P}) \in
\Ob_{\DLDPSC}$. Then, if we choose a discrete connection $\DCp{G}$ on
the principal $G$-bundle $\pi^{M,G}:M\rightarrow M/G$ we have the
reduced system
$\mathcal{M}^G := \mathcal{M}/(G,\DCp{G}) \in \Ob_{\DLDPSC}$. If
$H\subset G$ is a closed normal Lie subgroup, then $H$ is a symmetry
group of $\mathcal{M}$ (Proposition~\ref{prop:H_symmetry_subgroup})
and choosing a discrete connection $\DCp{H}$ on the principal $H$
bundle $\pi^{M,H}\rightarrow M/H$ we have the reduced system
$\mathcal{M}^H := \mathcal{M}/(H,\DCp{H})\in \Ob_{\DLDPSC}$. Last, if
$\DCp{H}$ satisfies either one of the conditions that appear in
Lemma~\ref{le:G-invariant_horiz_space_equivalence}, then $G/H$ is a
symmetry group of $\mathcal{M}^H$
(Proposition~\ref{prop:G/H-symmetry}). Then, choosing a discrete
connection $\DCp{G/H}$ on the principal $G/H$-bundle
$\pi^{M/H,G/G}:M/H\rightarrow (M/H)/(G/H)$, we have the discrete
system
$\mathcal{M}^{G/H} := \mathcal{M}^H/(G/H,\DCp{G/H})\in \Ob_{\DLDPSC}$.

The goal of this section is to prove that $\mathcal{M}^G$ and
$\mathcal{M}^{G/H}$ are isomorphic in $\DLDPSC$. The following diagram
depicts the relation between the different discrete
Lagrange--D'Alembert--Poincar\'e systems and morphisms.
\begin{equation*}
  \xymatrix{
    {} & {} & {\mathcal{M}} \ar[ddll]_{\Upsilon_{\DCp{G}}} 
    \ar[dr]^{\Upsilon_{\DCp{H}}} & {} & {}\\
    {} & {} & {} & {\mathcal{M}^H} \ar[dr]^{\Upsilon_{\DCp{G/H}}} & {}\\
    {\mathcal{M}^{G}} & {} & {} & {} & {\mathcal{M}^{G/H}}
  }
\end{equation*}
At the "geometric level" the corresponding manifolds and smooth maps
are
\begin{equation*}
  \xymatrix{
    {} & {} & {C'(E)} \ar[ddll]_{\Upsilon_{\DCp{G}}} 
    \ar[dr]^{\Upsilon_{\DCp{H}}} & {} & {}\\
    {} & {} & {} & {C'(\ti{H}_E)} \ar[dr]^{\Upsilon_{\DCp{G/H}}} & {}\\
    {C'(\ti{G}_E)} & {} & {} & {} & 
    {C'\big(\ti{G/H}_{\ti{H}_E}\big)}
  }
\end{equation*}

We can enlarge the previous diagram by adding the various
diffeomorphisms associated to a discrete connection and by taking into
account the diagram~\eqref{eq:diagram_ExM_to_reduced}.
\begin{equation}\label{eq:two_step_reduction-geometric_diagram}
  \xymatrix{
    {} & {} & {C'(E)} \ar@/_/[ddll]_{\Upsilon_{\DCp{G}}} 
    \ar[ddl]_(.7){\pi^{C'(E),G}} 
    \ar[d]^{\pi^{C'(E),H}} \ar@/^/[dr]^{\Upsilon_{\DCp{H}}} & {} & {}\\
    {} & {} & {\frac{C'(E)}{H}} \ar[d]^{\pi^{\frac{C'(E)}{H},G/H}} 
    \ar[dl]^(.45){F_1} \ar[r]^(.4){\Phi_{\DCp{H}}}_(.4){\sim} & 
    {C'(\ti{H}_E)} \ar[d]_(.25){\pi^{C'(\ti{H}_E),G/H}} 
    \ar[dr]^{\Upsilon_{\DCp{G/H}}} & {}\\
    {C'(\ti{G}_E)} & 
    {\frac{C'(E)}{G}} \ar[l]^(.4){\Phi_{\DCp{G}}}_(.4)\sim & 
    {\frac{\frac{C'(E)}{H}}{G/H}} \ar[l]^{F_2}_{\sim} 
    \ar[r]_{\widecheck{\Phi_{\DCp{H}}}}^(.45){\sim} & 
    {\frac{C'(\ti{H}_E)}{G/H}} \ar[r]_(.45){\Phi_{\DCp{G/H}}}^(.4)\sim & 
    {C'\big(\ti{G/H}_{\ti{H}_E}\big)}
  }
\end{equation}

The following result introduces the new functions that appear in
diagram \eqref{eq:two_step_reduction-geometric_diagram} and explores
their basic properties.

\begin{lemma}\label{le:two_step_diagram-properties}
  Under the previous conditions,
  \begin{enumerate}
  \item $\Phi_{\DCp{H}}:\frac{C'(E)}{H}\rightarrow C'(\ti{H}_{E})$
    (see Proposition~\ref{prop:equivariant_diffeomorphisms}) is a
    $G/H$-equivariant diffeomorphism. Then, it induces a smooth
    diffeomorphism
    $\widecheck{\Phi_{\DCp{H}}}:\frac{\frac{C'(E)}{H}}{G/H}\rightarrow
    \frac{C'(\ti{H}_{E})}{G/H}$.
  \item $\pi^{C'(E),G}:C'(E)\rightarrow \frac{C'(E)}{G}$ is a smooth
    $H$-invariant map. Then, it induces a smooth map
    $F_{1}:\frac{C'(E)}{H}\rightarrow \frac{C'(E)}{G}$.
  \item $F_{1}:\frac{C'(E)}{H}\rightarrow \frac{C'(E)}{G}$ is a smooth
    $G/H$-invariant map. Then, it induces a smooth map
    $F_{2}:\frac{\frac{C'(E)}{H}}{G/H}\rightarrow
    \frac{C'(E)}{G}$. Also, $F_{2}$ is a diffeomorphism.
  \item Diagram \eqref{eq:two_step_reduction-geometric_diagram} is
    commutative.
  \end{enumerate}
\end{lemma}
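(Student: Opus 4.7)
The plan is to verify each of the four items by combining Proposition~\ref{prop:equivariant_diffeomorphisms} with the universal properties of smooth quotients by free proper Lie group actions, using the hypothesis on $\DCp{H}$ provided by Lemma~\ref{le:G-invariant_horiz_space_equivalence} as the single non-formal input.

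For item (1), I would write out $\ti{\Phi}_{\DCp{H}}(\epsilon,m) = (\epsilon, \DCp{H}(\phi(\epsilon),m), \pi^{M,H}(m))$ and verify that, for each $g\in G$,
\begin{equation*}
  \ti{\Phi}_{\DCp{H}}\circ l_g^{C'(E)} = \ti{l}_g\circ \ti{\Phi}_{\DCp{H}},
\end{equation*}
where $\ti{l}_g(\epsilon,w,r) := (l_g^E(\epsilon),\, g w g^{-1},\, l_g^{M/H}(r))$. The first and third components match by $G$-equivariance of $l^E$ and of $\pi^{M,H}$; the middle component is exactly the identity $\DCp{H}(l_g^M(m_0),l_g^M(m_1)) = g\,\DCp{H}(m_0,m_1)\,g^{-1}$ granted by Lemma~\ref{le:G-invariant_horiz_space_equivalence}. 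Quotienting by $H$ on both sides and noting that $\ti{l}_g$ descends to the $G/H$-action $l^{C'(\ti{H}_E)}_{\pi^{G,H}(g)}$ defined via Lemma~\ref{le:residual_action}, we conclude that $\Phi_{\DCp{H}}$ is $G/H$-equivariant. Being an equivariant diffeomorphism between manifolds carrying free proper $G/H$-actions, it descends to a diffeomorphism $\widecheck{\Phi_{\DCp{H}}}$ of the orbit spaces.

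Items (2) and (3) are formal consequences of the universal property. Since $H\subset G$, the projection $\pi^{C'(E),G}$ is $G$-invariant and therefore $H$-invariant, so it factors through $\pi^{C'(E),H}$ to produce the smooth map $F_1$. If two classes $[x]_H, [y]_H\in C'(E)/H$ lie in the same $G/H$-orbit, then $y = l_{gh}^{C'(E)}(x)$ for some $g\in G$, $h\in H$, so $\pi^{C'(E),G}(x) = \pi^{C'(E),G}(y)$; hence $F_1$ is $G/H$-invariant and descends to the smooth map $F_2$. That $F_2$ is a diffeomorphism is the classical reduction-by-stages identification $(C'(E)/H)/(G/H)\simeq C'(E)/G$: its inverse is induced by the composition $\pi^{C'(E)/H,G/H}\circ \pi^{C'(E),H}$, which is manifestly $G$-invariant and therefore factors through $\pi^{C'(E),G}$. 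The commutativity of~\eqref{eq:two_step_reduction-geometric_diagram} in item (4) then follows by chasing definitions on representatives in each elementary square; the only nontrivial check is the central region involving $F_2$, $\widecheck{\Phi_{\DCp{H}}}$ and $\Phi_{\DCp{G}}$, which reduces via item (1) to the compatibility of the two-step quotient with the one-step quotient already established.

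The main obstacle is the equivariance computation in item (1): one must carefully identify the $G$-action on the $G$-factor of $E\times G\times (M/H)$, given by conjugation, with the $G/H$-action on $\ti{H}_E$ produced by Lemma~\ref{le:residual_action}, and verify that the cocycle-like relation $\DCp{H}(l_g^M(m_0),l_g^M(m_1)) = g\,\DCp{H}(m_0,m_1)\,g^{-1}$ is precisely what makes the middle factor pass to the quotient. Once this compatibility is written out cleanly, every remaining assertion is a direct unwinding of definitions.
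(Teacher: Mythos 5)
Your proof is correct and follows essentially the same route as the paper, which for this lemma simply defers to Lemma 7.5 of the authors' earlier unconstrained paper: $G$-equivariance of $\ti{\Phi}_{\DCp{H}}$ via the conjugation identity supplied by Lemma~\ref{le:G-invariant_horiz_space_equivalence}, universal-property arguments for the successive quotients (including the standard identification of $(C'(E)/H)/(G/H)$ with $C'(E)/G$ for $F_2$), and a definitional diagram chase. You correctly isolate the only non-formal input, namely the assumed $G$-conjugation equivariance of $\DCp{H}$, so nothing further is needed.
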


\begin{proof}
  The proof is the same as that of Lemma 7.5 in
  \cite{ar:fernandez_tori_zuccalli-lagrangian_reduction_of_discrete_mechanical_systems_by_stages}.
\end{proof}

\begin{theorem} \label{thm:F_diffeomorphism}
  Consider the description given at the beginning of this section. Let
  $F:C'(\ti{G/H}_{\ti{H}_E}) \rightarrow C'(\ti{G}_E)$ be definided by
  $F:=\Phi_{\DCp{G}}\circ F_{2} \circ(\widecheck{\Phi_{\DCp{H}}})^{-1}
  \circ (\Phi_{\DCp{G/H}})^{-1}$ (see diagram
  \eqref{eq:two_step_reduction-geometric_diagram}). Then, $F$ is an
  isomorphism in $\DLDPSC$.
\end{theorem}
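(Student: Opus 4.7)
The plan is to invoke Lemma~\ref{le:morphism} with $\mathcal{M}'=\mathcal{M}^{G/H}$ and $\mathcal{M}''=\mathcal{M}^{G}$, using the auxiliary morphisms $\Upsilon':=\Upsilon_{\DCp{G/H}}\circ \Upsilon_{\DCp{H}}:\mathcal{M}\to\mathcal{M}^{G/H}$ and $\Upsilon'':=\Upsilon_{\DCp{G}}:\mathcal{M}\to\mathcal{M}^{G}$. First I would check that $\Upsilon'$ is actually a morphism in $\DLDPSC$: by Proposition~\ref{prop:Upsilon_morphism}, $\Upsilon_{\DCp{H}}\in\Mor_{\DLDPSC}(\mathcal{M},\mathcal{M}^{H})$; Proposition~\ref{prop:G/H-symmetry} then ensures that $G/H$ is a symmetry group of $\mathcal{M}^{H}$, so a second application of Proposition~\ref{prop:Upsilon_morphism} gives $\Upsilon_{\DCp{G/H}}\in\Mor_{\DLDPSC}(\mathcal{M}^{H},\mathcal{M}^{G/H})$, and composition within the category $\DLDPSC$ (Proposition~\ref{prop:LPD_category}) yields $\Upsilon'\in\Mor_{\DLDPSC}(\mathcal{M},\mathcal{M}^{G/H})$.

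Next, I would verify the commutativity condition required by Lemma~\ref{le:morphism}, namely $F\circ \Upsilon'=\Upsilon''$. Unfolding the definitions coming from diagram~\eqref{eq:diagram_ExM_to_reduced}, we have $\Upsilon_{\DCp{G}}=\Phi_{\DCp{G}}\circ \pi^{C'(E),G}$, $\Upsilon_{\DCp{H}}=\Phi_{\DCp{H}}\circ \pi^{C'(E),H}$, and $\Upsilon_{\DCp{G/H}}=\Phi_{\DCp{G/H}}\circ \pi^{C'(\ti{H}_E),G/H}$. Substituting these into $F\circ \Upsilon_{\DCp{G/H}}\circ \Upsilon_{\DCp{H}}$ and using the definition $F=\Phi_{\DCp{G}}\circ F_{2}\circ (\widecheck{\Phi_{\DCp{H}}})^{-1}\circ (\Phi_{\DCp{G/H}})^{-1}$, the factors $(\Phi_{\DCp{G/H}})^{-1}\circ \Phi_{\DCp{G/H}}$ cancel, the relation $\widecheck{\Phi_{\DCp{H}}}\circ \pi^{C'(E)/H,G/H}=\pi^{C'(\ti{H}_E),G/H}\circ \Phi_{\DCp{H}}$ (Lemma~\ref{le:two_step_diagram-properties}) lets the remaining $\widecheck{\Phi_{\DCp{H}}}$ cancel, and the identities $F_{2}\circ \pi^{C'(E)/H,G/H}=F_{1}$ and $F_{1}\circ \pi^{C'(E),H}=\pi^{C'(E),G}$ reduce the composition to $\Phi_{\DCp{G}}\circ \pi^{C'(E),G}=\Upsilon_{\DCp{G}}$, as required.

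With the commutativity in hand, Lemma~\ref{le:morphism} concludes that $F\in\Mor_{\DLDPSC}(\mathcal{M}^{G/H},\mathcal{M}^{G})$. Finally, $F$ is a composition of four diffeomorphisms (three of them supplied by Proposition~\ref{prop:equivariant_diffeomorphisms} applied to $\DCp{G},\DCp{H},\DCp{G/H}$, and the remaining one $F_{2}$ by Lemma~\ref{le:two_step_diagram-properties}), so $F$ is itself a diffeomorphism; the last clause of Lemma~\ref{le:morphism} then upgrades $F$ to an isomorphism in $\DLDPSC$.

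The whole argument is short precisely because Lemma~\ref{le:morphism} packages the hard work: all morphism axioms for $F$ (surjectivity, compatibility with kinematic constraints $\mathcal{D}_d$, variational constraints $\mathcal{D}$, the chaining map $\mathcal{P}$, etc.) are transferred automatically from $\Upsilon_{\DCp{G}}$ and $\Upsilon'$ once the underlying triangle commutes. Consequently, the only place where genuine work is required is the diagram chase verifying $F\circ \Upsilon_{\DCp{G/H}}\circ \Upsilon_{\DCp{H}}=\Upsilon_{\DCp{G}}$, which I expect to be the main (and only mildly tedious) step; the key inputs for that chase are the $G/H$-equivariance of $\Phi_{\DCp{H}}$ and the factorization properties of $F_{1}$ and $F_{2}$ established in Lemma~\ref{le:two_step_diagram-properties}.
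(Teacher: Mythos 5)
Your proposal is correct and follows essentially the same route as the paper: both arguments reduce everything to Lemma~\ref{le:morphism} applied to $\Upsilon_{\DCp{G}}$ and $\Upsilon_{\DCp{G/H}}\circ\Upsilon_{\DCp{H}}$, with the diffeomorphism property of $F$ coming from Proposition~\ref{prop:equivariant_diffeomorphisms} and Lemma~\ref{le:two_step_diagram-properties}. The only cosmetic difference is that you carry out the diagram chase for $F\circ\Upsilon_{\DCp{G/H}}\circ\Upsilon_{\DCp{H}}=\Upsilon_{\DCp{G}}$ explicitly, whereas the paper simply cites the commutativity of diagram~\eqref{eq:two_step_reduction-geometric_diagram} already established in Lemma~\ref{le:two_step_diagram-properties}.
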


\begin{proof}
  From Proposition~\ref{prop:equivariant_diffeomorphisms} both
  $\Phi_{\DCp{G}}$ and $\Phi_{\DCp{G/H}}$ are diffeomorphisms and, by
  Lemma~\ref{le:two_step_diagram-properties}, both
  $\widecheck{\Phi_{\DCp{H}}}$ and $F_2$ are diffeomorphisms. Thus,
  $F$ is a diffeomorphism. As $\Upsilon_{\DCp{H}}$ and
  $\Upsilon_{\DCp{G/H}}$ are morphisms in $\DLDPSC$, the same is true
  for $\Upsilon_{\DCp{G/H}} \circ \Upsilon_{\DCp{H}}$. Then, by
  Lemma~\ref{le:morphism} applied to $\Upsilon_{\DCp{G}}$,
  $\Upsilon_{\DCp{G/H}} \circ \Upsilon_{\DCp{H}}$ and $F$, we conclude
  that $F$ is an isomorphism in $\DLDPSC$
\end{proof}

\begin{theorem}\label{thm:4_pts_in_stages}
  Consider the description given at the beginning of this section.
  \begin{enumerate}
  \item \label{it:4_pts_in_stages-trajectories} Let
    $(\epsilon_{\cdot},m_{\cdot}) =
    ((\epsilon_0,m_1),\ldots,(\epsilon_{N-1},m_N))$ be a discrete path
    in $C'(E)$. For $k=0,\ldots,N-1$ we define the discrete paths
    $(v_{k}^{H},r_{k+1}^{H}):=\Upsilon_{\DCp{H}}(\epsilon_k,m_{k+1})$,
    $(v_{k}^{G},r_{k+1}^{G}):=\Upsilon_{\DCp{G}}(\epsilon_k,m_{k+1})$
    and
    $(v_{k}^{G/H},r_{k+1}^{G/H}):=\Upsilon_{\DCp{G/H}}(v_{k}^{H},r_{k+1}^{H})$
    in $C'(\ti{H}_{E})$, $C'(\ti{G}_{E})$ and
    $C'(\ti{G/H}_{\ti{H}_{E}})$ respectively.  Then, the following
    conditions are equivalent.
    \begin{enumerate}
    \item \label{it:4_pts_in_stages-1} $(\epsilon_{\cdot},m_{\cdot})$
      is a trajectory of $\mathcal{M}$.
    \item \label{it:4_pts_in_stages-G} $(v_{\cdot}^{G},r_{\cdot}^{G})$
      is a trajectory of $\mathcal{M}^{G}$.
    \item \label{it:4_pts_in_stages-H} $(v_{\cdot}^{H},r_{\cdot}^{H})$
      is a trajectory of $\mathcal{M}^{H}$.
    \item \label{it:4_pts_in_stages-G/H}
      $(v_{\cdot}^{G/H},r_{\cdot}^{G/H})$ is a trajectory of
      $\mathcal{M}^{G/H}$.
    \end{enumerate}
		
  \item \label{it:4_pts_in_stages-trajectories_F} Let
    $F:C'(\ti{G/H}_{\ti{H}_{E}}) \rightarrow C'(\ti{G}_{E})$ be the
    diffeomorphism defined in Theorem
    \ref{thm:F_diffeomorphism}. Then,
    $F(v_{k}^{G/H},r_{k+1}^{G/H})=(v_{k}^{G},r_{k+1}^{G})$ for all
    $k$.
		
  \item \label{it:4_pts_in_stages-isomorphism} The systems
    $\mathcal{M}^{G}$ and $\mathcal{M}^{G/H}$ are isomorphic in
    $\DLDPSC$.
  \end{enumerate}
\end{theorem}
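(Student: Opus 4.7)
The plan is to handle the three parts of the theorem by invoking previously established results and chasing the commutative diagram~\eqref{eq:two_step_reduction-geometric_diagram}.

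For part~\ref{it:4_pts_in_stages-trajectories}, I would apply Theorem~\ref{thm:equivalent_trajectories} three times. The equivalence between~\ref{it:4_pts_in_stages-1} and~\ref{it:4_pts_in_stages-G} follows directly from that theorem applied to $\mathcal{M}$ with symmetry group $G$ and discrete connection $\DCp{G}$, observing that $(v_k^G, r_{k+1}^G) = \Upsilon_{\DCp{G}}(\epsilon_k, m_{k+1})$. The equivalence between~\ref{it:4_pts_in_stages-1} and~\ref{it:4_pts_in_stages-H} follows by the same theorem applied to $\mathcal{M}$ with $H$ and $\DCp{H}$. Finally, since $(v_k^{G/H}, r_{k+1}^{G/H}) = \Upsilon_{\DCp{G/H}}(v_k^H, r_{k+1}^H)$, applying the theorem to $\mathcal{M}^H$ with $G/H$ and $\DCp{G/H}$ gives the equivalence between~\ref{it:4_pts_in_stages-H} and~\ref{it:4_pts_in_stages-G/H}. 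Chaining these three equivalences establishes the full cycle.

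For part~\ref{it:4_pts_in_stages-trajectories_F}, the strategy is to show that $F\circ \Upsilon_{\DCp{G/H}}\circ \Upsilon_{\DCp{H}} = \Upsilon_{\DCp{G}}$ as maps $C'(E)\to C'(\ti{G}_E)$, after which evaluation at $(\epsilon_k,m_{k+1})$ yields the claim. I would substitute the definition $F=\Phi_{\DCp{G}}\circ F_{2}\circ (\widecheck{\Phi_{\DCp{H}}})^{-1}\circ (\Phi_{\DCp{G/H}})^{-1}$ and use the identity $\Phi_{\DCp{G/H}}^{-1}\circ \Upsilon_{\DCp{G/H}} = \pi^{C'(\ti{H}_E),G/H}$ from~\eqref{eq:Upsilon_DC-def}, then the relation $(\widecheck{\Phi_{\DCp{H}}})^{-1}\circ \pi^{C'(\ti{H}_E),G/H} = \pi^{C'(E)/H,G/H}\circ \Phi_{\DCp{H}}^{-1}$ coming from the fact that $\widecheck{\Phi_{\DCp{H}}}$ is induced by $\Phi_{\DCp{H}}$ on the $G/H$-quotients (Lemma~\ref{le:two_step_diagram-properties}), next the identity $\Phi_{\DCp{H}}^{-1}\circ \Upsilon_{\DCp{H}} = \pi^{C'(E),H}$, then the defining relation $F_{2}\circ \pi^{C'(E)/H,G/H}\circ \pi^{C'(E),H} = \pi^{C'(E),G}$ given by the construction of $F_1,F_2$ in Lemma~\ref{le:two_step_diagram-properties}, and finally $\Phi_{\DCp{G}}\circ \pi^{C'(E),G} = \Upsilon_{\DCp{G}}$. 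Each of these substitutions is simply a rewriting using the commutativity of diagram~\eqref{eq:two_step_reduction-geometric_diagram}.

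For part~\ref{it:4_pts_in_stages-isomorphism}, there is nothing new to prove: this is exactly the content of Theorem~\ref{thm:F_diffeomorphism}, so I would simply cite it.

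The one spot that requires a little care is part~\ref{it:4_pts_in_stages-trajectories_F}, specifically the bookkeeping of which quotient each arrow in diagram~\eqref{eq:two_step_reduction-geometric_diagram} lives over and ensuring that the induced map $\widecheck{\Phi_{\DCp{H}}}$ satisfies the intertwining relation with the two quotient projections. Once that is in place, the computation collapses to a linear chain of equalities, so the rest of the argument is a straightforward diagram chase that requires no further analysis beyond what is already established in Lemma~\ref{le:two_step_diagram-properties} and Proposition~\ref{prop:equivariant_diffeomorphisms}.
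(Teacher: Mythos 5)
Your proposal is correct and follows essentially the same route as the paper: part (1) by Theorem~\ref{thm:equivalent_trajectories}, part (3) by Theorem~\ref{thm:F_diffeomorphism}, and part (2) via the identity $\Upsilon_{\DCp{G}} = F\circ \Upsilon_{\DCp{G/H}}\circ \Upsilon_{\DCp{H}}$, which the paper obtains from the commutativity of diagram~\eqref{eq:two_step_reduction-geometric_diagram} (Lemma~\ref{le:two_step_diagram-properties}) and which you merely unpack in more detail.
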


\begin{proof}
  Point~\ref{it:4_pts_in_stages-trajectories} is verified by
  Theorem~\ref{thm:equivalent_trajectories}, while
  point~\ref{it:4_pts_in_stages-isomorphism} follows from
  Theorem~\ref{thm:F_diffeomorphism}. The following computation proves
  point~\ref{it:4_pts_in_stages-trajectories_F}.
  \begin{equation*}
    \begin{split}
      (v_{k}^{G},r_{k+1}^{G}) =&\Upsilon_{\DCp{G}}(\epsilon_k,m_{k+1})
      = (F\circ \Upsilon_{\DCp{G/H}}\circ
      \Upsilon_{\DCp{H}})(\epsilon_k,m_{k+1}) \\=& (F\circ
      \Upsilon_{\DCp{G/H}})(v_{k}^{H},r_{k+1}^{H}) =
      F(v_{k}^{G/H},r_{k+1}^{G/H}).
    \end{split}
  \end{equation*}
\end{proof}




\def\cprime{$'$} \def\polhk#1{\setbox0=\hbox{#1}{\ooalign{\hidewidth
  \lower1.5ex\hbox{`}\hidewidth\crcr\unhbox0}}} \def\cprime{$'$}
  \def\cprime{$'$}
\providecommand{\bysame}{\leavevmode\hbox to3em{\hrulefill}\thinspace}
\providecommand{\MR}{\relax\ifhmode\unskip\space\fi MR }
\providecommand{\MRhref}[2]{%
  \href{http://www.ams.org/mathscinet-getitem?mr=#1}{#2}
}
\providecommand{\href}[2]{#2}


\end{document}